\numberwithin{equation}{section}
\numberwithin{figure}{section}
\theoremstyle{plain}
\newtheorem{thm}{\protect\theoremname}
\theoremstyle{definition}
\newtheorem{defn}[thm]{\protect\definitionname}
\theoremstyle{plain}
\newtheorem{cor}[thm]{\protect\corollaryname}
\theoremstyle{remark}
\newtheorem{rem}[thm]{\protect\remarkname}
\theoremstyle{plain}
\newtheorem{fact}[thm]{\protect\factname}
\theoremstyle{plain}
\newtheorem{prop}[thm]{\protect\propositionname}
\theoremstyle{remark}
\newtheorem{claim}[thm]{\protect\claimname}
\theoremstyle{plain}
\newtheorem{lem}[thm]{\protect\lemmaname}
\newcommand{\filleddiamond}{\raisebox{0.17\height}{\scalebox{0.7}[0.4]{$\vdots$}}}
\theoremstyle{plain}
\newtheorem{ques}[thm]{Question}
\theoremstyle{plain}
\newtheorem{claim}[thm]{\protect\claimname}
\numberwithin{thm}{section}
\newcommand{\pf}{\mathfrak{pf}} 
\newcommand{\herm}{{\cal HERM}}   
\newcommand\tmpcolor{blue}
\newcommand\M{{\color{\tmpcolor}\mathcal{M}}} 
\newcommand\vR{{\color{\tmpcolor}\v{R}}}  
\newcommand\vC{{\color{\tmpcolor}\v{C}}} 
\newcommand{\multi}{{\mathcal MultiMatch}}
\newtheorem*{prop*}{Proposition}
\renewcommand\v[1]{{\color{\tmpcolor}\dot{#1}}}
\providecommand{\claimname}{Claim}
\providecommand{\corollaryname}{Corollary}
\providecommand{\definitionname}{Definition}
\providecommand{\factname}{Fact}
\providecommand{\lemmaname}{Lemma}
\providecommand{\propositionname}{Proposition}
\providecommand{\remarkname}{Remark}
\providecommand{\theoremname}{Theorem}
\begin{document}
\global\long\def\P{{\cal P} }%

\title{Ramanujan Coverings of Graphs}
\author{Chris Hall\thanks{Author Hall was partially supported by Simons Foundation award 245619
and IAS NSF grant DMS-1128155.},~ Doron Puder\thanks{Author Puder was supported by the Rothschild fellowship and by the
National Science Foundation under agreement No. CCF-1412958.} ~~and~ William F. Sawin\thanks{Author Sawin was supported by the National Science Foundation Graduate
Research Fellowship under Grant No. DGE-1148900.}}
\maketitle
\begin{abstract}
Let $G$ be a finite connected graph, and let $\rho$ be the spectral
radius of its universal cover. For example, if $G$ is $k$-regular
then $\rho=2\sqrt{k-1}$. We show that for every $r$, there is an
$r$-covering (a.k.a.~an $r$-lift) of $G$ where all the new eigenvalues
are bounded from above by $\rho$. It follows that a bipartite Ramanujan
graph has a Ramanujan $r$-covering for every $r$. This generalizes
the $r=2$ case due to Marcus, Spielman and Srivastava \cite{MSS13}.

Every $r$-covering of $G$ corresponds to a labeling of the edges
of $G$ by elements of the symmetric group $S_{r}$. We generalize
this notion to labeling the edges by elements of various groups and
present a broader scenario where Ramanujan coverings are guaranteed
to exist. 

In particular, this shows the existence of richer families of bipartite
Ramanujan graphs than was known before. Inspired by \cite{MSS13},
a crucial component of our proof is the existence of interlacing families
of polynomials for complex reflection groups. The core argument of
this component is taken from \cite{MSS15}.

Another important ingredient of our proof is a new generalization
of the matching polynomial of a graph. We define the $r$-th matching
polynomial of $G$ to be the average matching polynomial of all $r$-coverings
of $G$. We show this polynomial shares many properties with the original
matching polynomial. For example, it is real rooted with all its roots
inside $\left[-\rho,\rho\right]$.
\end{abstract}
\tableofcontents{}

\section{Introduction\label{sec:Introduction}}

\subsection{Ramanujan coverings}

Throughout this paper, we assume that $G$ is a finite, connected,
undirected graph on $n$ vertices and that $A_{G}$\marginpar{$A_{G}$}
is its adjacency matrix. The eigenvalues of $A_{G}$ are real and
we denote them by 
\[
\lambda_{n}\le\ldots\le\lambda_{2}\le\lambda_{1}=\pf\left(G\right),
\]
where $\lambda_{1}=\pf\left(G\right)$\marginpar{$\pf\left(G\right)$}
is the Perron-Frobenius eigenvalue of $A_{G}$, referred to as the
trivial eigenvalue. For example, $\pf\left(G\right)=k$ for $G$ $k$-regular.
The smallest eigenvalue, $\lambda_{n}$, is at least $-\pf\left(G\right)$,
with equality if and only if $G$ is bipartite. Denote by $\lambda\left(G\right)$\marginpar{$\lambda\left(G\right)$}
the largest absolute value of a non-trivial eigenvalue, namely $\lambda\left(G\right)=\max\left(\lambda_{2},-\lambda_{n}\right)$.
It is well known that $\lambda\left(G\right)$ provides a good estimate
to different expansion properties of $G$: the smaller $\lambda\left(G\right)$
is, the better expanding $G$ is (see \cite{HLW06,Pud15}). 

However, $\lambda\left(G\right)$ cannot be arbitrarily small. Let
$\rho\left(G\right)$\marginpar{$\rho\left(G\right)$} be the spectral
radius of the universal covering tree of $G$. For instance, if $G$
is $k$-regular then $\rho\left(G\right)=2\sqrt{k-1}$. It is known
that $\lambda\left(G\right)$ cannot be much smaller than $\rho\left(G\right)$,
so graphs with $\lambda\left(G\right)\le\rho\left(G\right)$ are considered
optimal expanders (we elaborate in Section \ref{subsec:Expander-and-Ramanujan}
below). Following \cite{LPS88,HLW06} they are called \textbf{Ramanujan
graphs}, and the interval $\left[-\rho\left(G\right),\rho\left(G\right)\right]$
called \textbf{the Ramanujan interval}. In the bipartite case, $\lambda\left(G\right)=\left|\lambda_{n}\right|=\pf\left(G\right)$
is large, but $G$ can still expand well in many senses (see Section
\ref{subsec:Expander-and-Ramanujan}), and the optimal scenario is
when all other eigenvalues are within the Ramanujan interval, namely,
when $\lambda_{n-1},\lambda_{n-2},\ldots,\lambda_{2}\in\left[-\rho\left(G\right),\rho\left(G\right)\right]$.
We call a bipartite graph with this property a \textbf{bipartite-Ramanujan
graph.} 

Let $H$ be a topological $r$-sheeted covering\footnote{For a definition of a covering map between graphs see, e.g., \cite[Section 6.8]{godsil2001algebraic}
or the page ``Covering graph'' in Wikipedia. For an explicit construction
of finite coverings see Section \ref{subsec:Group-labeling-of}.} of $G$ ($r$-covering in short) with covering map $p\colon H\to G$.
Let $V\left(G\right)$ denote the set of vertices of $G$. If $f\colon V\left(G\right)\to\mathbb{R}$
is an eigenfunction of $G$, then the composition $f\circ p$ is an
eigenfunction of $H$ with the same eigenvalue. Thus, out of the $rn$
eigenvalues of $H$ (considered as a multiset), $n$ are induced from
$G$ and are referred to as \textbf{\emph{old eigenvalues}}. The other
$\left(r-1\right)n$ are called the \textbf{\emph{new eigenvalues}}\emph{
}of $H$.
\begin{defn}
Let $H$ be a finite covering of a finite graph $G$. We say that
$H$ is a \textbf{Ramanujan Covering} of $G$ if all the new eigenvalues
of $H$ are in $\left[-\rho\left(G\right),\rho\left(G\right)\right]$.
We say $H$ is \emph{a }\textbf{one-sided Ramanujan covering} if all
the new eigenvalues are bounded from above\footnote{We could also define a one-sided Ramanujan covering as having all
its eigenvalues bounded from below\emph{ by $-\rho\left(G\right)$.}
Every result stated in the paper about these coverings would still
hold for the lower-bound case, unless stated otherwise.} by $\rho\left(G\right)$.
\end{defn}

The existence of infinitely many $k$-regular Ramanujan graphs for
every $k\ge3$ is a long-standing open question. Bilu and Linial \cite{BL06}
suggest the following approach to solving this conjecture: start with
your favorite $k$-regular Ramanujan graph (e.g.~the complete graph
on $k+1$ vertices) and construct an infinite tower of Ramanujan $2$-coverings.
They conjecture that every (regular) graph has a Ramanujan $2$-covering.
This approach turned out to be very useful in the groundbreaking result
of Marcus, Spielman and Srivastava \cite{MSS13}, who proved that
every graph has a one-sided Ramanujan 2-covering. This translates,
as explained below, to that there are infinitely many $k$-regular
\emph{bipartite} Ramanujan graphs of every degree $k$. The question
remains open with respect to fully (i.e.~non-bipartite) Ramanujan
graphs.

In this paper, we generalize the result of \cite{MSS13} to coverings
of every degree:
\begin{thm}
\label{thm:Every-graph-has-one-sided-d-ram-cover}Every connected,
loopless graph has a one-sided Ramanujan $r$-covering for every $r$.
\end{thm}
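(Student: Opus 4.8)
The plan is to carry over the method of interlacing polynomials of Marcus--Spielman--Srivastava from $2$-coverings to $r$-coverings, replacing the abelian parameter space $(\mathbb{Z}/2)^{E(G)}$ of \cite{MSS13} by the non-abelian space $S_{r}^{E(G)}$ and handling the resulting averages via interlacing families for reflection groups, $S_{r}$ being the Weyl group of type $A_{r-1}$ (and hence a complex reflection group).

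First I would recall that, after orienting the edges, $r$-coverings of $G$ are parametrized by labelings $\sigma\colon E(G)\to S_{r}$, and that the new eigenvalues of the covering $H_\sigma$ are exactly the eigenvalues of the real symmetric operator $A_\sigma$ on $\mathbb{C}^{V(G)}\otimes\mathbb{C}^{r-1}$ obtained from the adjacency structure of $G$ by placing on the block of each oriented edge $e$ the $(r-1)\times(r-1)$ orthogonal matrix $\pi(\sigma_e)$, where $\pi$ is the standard representation of $S_r$; the trivial summand $\mathbb{C}^{V(G)}\otimes\mathbf{1}$ reproduces $A_G$ and accounts for the old eigenvalues. Thus the theorem reduces to producing a single $\sigma$ with $\lambda_{\max}(A_\sigma)\le\rho(G)$. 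Next I would identify the uniform average $\mathbb{E}_{\sigma\in S_{r}^{E(G)}}\bigl[\chi(A_\sigma,x)\bigr]$ with the ``new part'' of the $r$-th matching polynomial of $G$, and invoke its key property established in the matching-polynomial part of the paper: it is real-rooted with all roots inside $[-\rho(G),\rho(G)]$.

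The heart of the argument is to show that $\bigl\{\chi(A_\sigma,x)\bigr\}_{\sigma\in S_{r}^{E(G)}}$ is an interlacing family. I would build the usual tree of conditional expectations, revealing the edge-labels one at a time, so that the polynomial at a node is the average of $\chi(A_\sigma,x)$ over all completions of the partial labeling; the interlacing-family condition then amounts to saying that at each node the children polynomials (one per value of the next edge's label) admit a common interlacer, equivalently that all their convex combinations are real-rooted. Here the reflection-group input is essential: one uses that a uniformly random element of $S_{r}$ can be generated as a finite product of independent, suitably structured random transpositions (for instance via the Fisher--Yates shuffle), so that revealing an edge's label reduces to a finite sequence of steps, each multiplying that edge's block by such a random transposition; and each transposition acts on the standard representation as a reflection, i.e.\ a rank-one update of the identity. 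The required stability statement is then that if $\mathbb{E}\,\chi(A,x)$ is real-rooted for a random symmetric operator $A$ assembled from such blocks, it stays real-rooted after multiplying one block by an independent such random transposition. This is proved following the core of \cite{MSS15}; propagating it down every branch shows that all conditional expectations are real-rooted, hence every internal node has the common-interlacer property.

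Finally, the interlacing-family property furnishes a labeling $\sigma^{*}$ with $\lambda_{\max}\bigl(\chi(A_{\sigma^{*}},x)\bigr)\le\lambda_{\max}\bigl(\mathbb{E}_{\sigma}\,\chi(A_\sigma,x)\bigr)\le\rho(G)$, so $H_{\sigma^{*}}$ is a one-sided Ramanujan $r$-covering. I expect the principal obstacle to be exactly the local interlacing step for $S_{r}$: unlike the case $r=2$, the new-part representation is genuinely higher-dimensional and not monomial, so the clean rank-one computations of \cite{MSS13} are not enough, and one must set up the right notion of interlacing families over the reflection group and verify stability under multiplication by a reflection. A secondary point, used in the second paragraph, is to pin down both the identity between $\mathbb{E}_\sigma\,\chi(A_\sigma,x)$ and the new part of the $r$-th matching polynomial and the root bound $[-\rho(G),\rho(G)]$ for the latter.
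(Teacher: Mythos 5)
Your proposal follows essentially the same route as the paper: reduce to $(S_r,\mathrm{std})$-labelings, identify the expected characteristic polynomial of the new spectrum with an averaged matching polynomial whose real roots lie in $[-\rho(G),\rho(G)]$ (the content of Theorem \ref{thm:P1} together with Corollary \ref{cor:d-matching-poly has no non-ramanujan real zeros}), and run the interlacing-family argument of \cite{MSS15} with rank-one (reflection) random variables obtained by writing a uniform element of $S_r$ as a product of independent random transpositions, exactly the structure of Theorem \ref{thm:P2}. The only cosmetic difference is that you use an explicit Fisher--Yates-type decomposition (recorded in the paper as Remark \ref{remark:middle point for Sym group}), whereas the paper's general proof of Theorem \ref{thm:P2} uses a lazy random walk converging to the uniform distribution so as to cover arbitrary complex reflection groups.
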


In fact, this result holds also for graphs with loops, as long as
they are regular (Proposition \ref{prop:regular graphs with loops}),
so the only obstruction is irregular graphs with loops. We stress
that throughout this paper, all statements involving graphs hold not
only for simple graphs, but also for graphs with multiple edges. Unless
otherwise stated, the results also hold for graphs with loops.

A finite graph is bipartite if and only if its spectrum is symmetric
around zero. In addition, every covering of a bipartite graph is bipartite.
Thus, every one-sided Ramanujan covering of a bipartite graph is,
in fact, a (fully) Ramanujan covering. Therefore,
\begin{cor}
\label{cor:Every-bipartite-has-ram-cover}Every connected bipartite
graph has a Ramanujan $r$-covering for every $r$.
\end{cor}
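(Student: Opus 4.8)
The plan is to derive Corollary \ref{cor:Every-bipartite-has-ram-cover} directly from Theorem \ref{thm:Every-graph-has-one-sided-d-ram-cover} by exploiting the rigidity of the spectrum of a bipartite graph. The key observation is the two-part structural fact already highlighted in the excerpt: first, a finite graph is bipartite if and only if its adjacency spectrum is symmetric about $0$ (with multiplicities); second, any topological covering $H \to G$ of a bipartite $G$ is again bipartite, since a proper $2$-coloring of $V(G)$ pulls back along the covering map to a proper $2$-coloring of $V(H)$ (equivalently, $H$ has no odd cycles because any odd closed walk in $H$ projects to an odd closed walk in $G$).

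The argument then runs as follows. Let $G$ be connected and bipartite, and fix $r$. Since $G$ is bipartite it is in particular loopless, so Theorem \ref{thm:Every-graph-has-one-sided-d-ram-cover} applies and furnishes an $r$-covering $H$ of $G$ all of whose new eigenvalues are $\le \rho(G)$. By the pullback-coloring remark, $H$ is bipartite as well, so the spectrum of $H$ — and in particular the multiset of new eigenvalues of $H$, being the complement of the multiset of old eigenvalues inside the full spectrum, where the old eigenvalues are exactly the eigenvalues of $G$, which are themselves symmetric about $0$ — is symmetric about $0$. Hence the multiset of new eigenvalues is invariant under $\mu \mapsto -\mu$. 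Combined with the upper bound, every new eigenvalue $\mu$ satisfies $\mu \le \rho(G)$ and also $-\mu \le \rho(G)$, i.e. $\mu \ge -\rho(G)$; thus all new eigenvalues lie in $[-\rho(G),\rho(G)]$, which is precisely the statement that $H$ is a (full) Ramanujan covering of $G$.

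One small point worth spelling out is why the symmetry of the \emph{full} spectrum of $H$, together with the symmetry of the old part, forces symmetry of the new part: the new eigenvalues are obtained from the full list by deleting one copy of each eigenvalue of $G$, and deleting a $\{\pm\}$-symmetric sub-multiset from a $\{\pm\}$-symmetric multiset leaves a $\{\pm\}$-symmetric remainder. Alternatively, and perhaps more cleanly, one can invoke the standard description of the new eigenvalues of an $r$-covering as the union of the spectra of the $r-1$ nontrivial "twisted" adjacency operators; each such operator, for a bipartite $G$, again has a block-off-diagonal form with respect to the bipartition, so each of these spectra is individually symmetric about $0$, and the same two-inequalities argument applies.

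I do not anticipate any real obstacle here: the corollary is a formal consequence of the theorem once one records that coverings preserve bipartiteness and that bipartiteness is equivalent to spectral symmetry. The only thing to be careful about is the footnote in the definition of one-sided Ramanujan covering — since a one-sided bound from below would work equally well, it is worth noting explicitly that the bipartite case does not depend on which of the two one-sided conventions is used, precisely because symmetry of the spectrum converts either one-sided bound into the full two-sided bound.
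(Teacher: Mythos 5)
Your proof is correct and is essentially the paper's own argument: the paper derives the corollary from Theorem \ref{thm:Every-graph-has-one-sided-d-ram-cover} by noting that every covering of a bipartite graph is bipartite and that bipartiteness is equivalent to spectral symmetry about $0$, so a one-sided Ramanujan covering is automatically a full one. Your extra remark on why the \emph{new} spectrum itself is symmetric is a fine (and slightly more careful) spelling-out of the same step.
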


In the special case where the base graph is $\vcenter{\xymatrix@1@R=2pt@C=25pt{ 
\bullet \ar@{-}@/^0.4pc/[r]_{\filleddiamond} \ar@{-}@/^.2pc/[r] \ar@{-}@/_0.41pc/[r] \ar@{-}[r]  & \bullet}}$
 (two vertices with $k$ edges connecting them), Theorem \ref{thm:Every-graph-has-one-sided-d-ram-cover}
(and Corollary \ref{cor:Every-bipartite-has-ram-cover}) were shown
in \cite{MSS15}, using a very different argument. In this regard,
our result generalizes the 2-coverings result from \cite{MSS13} as
well as the more recent result from \cite{MSS15}. \medskip{}

In our view, the main contributions of this paper are the following.
First, our results shed new light on the work of Marcus-Spielman-Srivastava
\cite{MSS13}: we show there is nothing special about $r=2$ ($2$-covering
of graphs), and that with the right framework, the ideas can be generalized
to any $r\ge2$. Second, our main result shows the existence of richer
families of bipartite-Ramanujan graphs than was known before (see
Corollary \ref{cor:simple-ramanujan-graphs}). Third, we introduce
a more general framework of group-based coverings of graphs, extend
Theorem \ref{thm:Every-graph-has-one-sided-d-ram-cover} to a more
general setting and point to the heart of the matter -- Properties
$\left(\P1\right)$ and $\left(\P2\right)$ defined below. Finally,
we introduce the $d$-matching-polynomial which has nice properties
and seems to be an interesting object for its own right.

\subsection{Group labeling of graphs and Ramanujan coverings\label{subsec:Group-labeling-of}}

As mentioned above, we suppose that $G$ is undirected, yet we regard
it as an oriented graph. More precisely, we choose an orientation
for each edge in $G$, and we write $E^{+}(G)$\marginpar{$E^{+}\left(G\right)$}
for the resulting set of oriented edges and $E^{-}(G)$\marginpar{$E^{-}\left(G\right)$}
for the edges with the opposite orientation. Finally, if $e$ is an
edge in $E^{\pm}(G)$, we write $-e$\marginpar{$-e$} for the corresponding
edge in $E^{\mp}(G)$ with the opposite orientation, and we identify
$E(G)$ with the disjoint union $E^{+}(G)\sqcup E^{-}(G)$. We let
$h\left(e\right)$ and $t\left(e\right)$\marginpar{$h\left(e\right),t\left(e\right)$}
denote the head vertex and tail vertex of $e\in E\left(G\right)$,
respectively. We say that $G$ is an \textit{oriented undirected graph}.

\begin{wrapfigure}{R}{0.4\columnwidth}%
\noindent \begin{centering}
\includegraphics[viewport=0bp 100bp 300bp 450bp,scale=0.4]{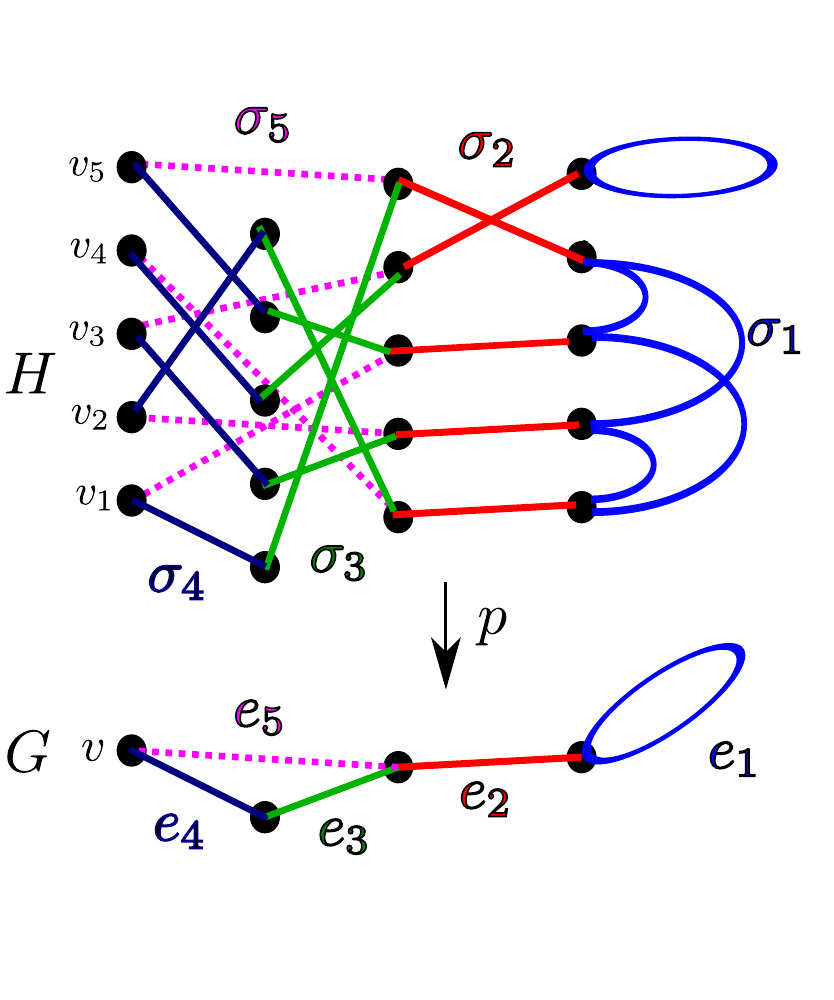}
\par\end{centering}
\caption{\label{fig:covering} A $5$-covering of a graph defined by permutations}
\end{wrapfigure}%
Throughout this paper, the family of $r$-coverings of the graph $G$
is defined via the following natural model, introduced in \cite{AL02}
and \cite{Fri03}. The vertex set of every $r$-covering $H$ is $\left\{ v_{i}\,\middle|\,v\in V\left(G\right),1\le i\le r\right\} $.
Its edges are defined via a function $\sigma\colon E\left(G\right)\to S_{r}$
satisfying $\sigma\left(-e\right)=\sigma\left(e\right)^{-1}$ (occasionally,
we denote $\sigma\left(e\right)$ by $\sigma_{e}$): for every $e\in E^{+}\left(G\right)$
we introduce in $H$ the $r$ edges connecting $t\left(e\right)_{i}$
to $h\left(e\right)_{\sigma_{e}\left(i\right)}$ for $1\le i\le r$.
See Figure \ref{fig:covering}.
\begin{defn}
\label{def:random-covering}For $G$ a finite graph and $r\in\mathbb{Z}_{\ge1}$,
denote by $\mathbf{{\cal C}_{r,G}}$\marginpar{${\cal C}_{r,G}$}
the probability space consisting of all $r$-coverings $\left\{ \sigma\colon E\left(G\right)\to S_{r}\,\middle|\,\sigma\left(-e\right)=\sigma\left(e\right)^{-1}\right\} $,
endowed with uniform distribution.
\end{defn}

Let $H\in{\cal C}_{r,G}$ correspond to $\sigma\colon E\left(G\right)\to S_{r}$
and let $f\colon V\left(H\right)\to\mathbb{C}$ be an eigenfunction
of $H$ with eigenvalue $\mu$. Define $\overline{f}\colon V\left(G\right)\to\mathbb{C}^{r}$
in terms of $f$: for every $v\in V\left(G\right)$, let $\overline{f}\left(v\right)$
be the transpose of the vector $\left(f\left(v_{1}\right),f\left(v_{2}\right),\ldots,f\left(v_{r}\right)\right)$.
Considering the permutations $\sigma_{e}$ as permutation matrices,
the collection of vectors $\left\{ \overline{f}\left(v\right)\right\} _{v\in V\left(G\right)}$
satisfies the following equation for every $v\in V\left(G\right)$:
\begin{equation}
\sum_{e:\,h\left(e\right)=v}\sigma_{e}\overline{f}\left(t\left(e\right)\right)=\mu\cdot\overline{f}\left(v\right)\label{eq:vector-eigenfunctions}
\end{equation}
(note that every loop at $v$ appears twice in the summation, once
in each orientation). Conversely, every function $f\colon V\left(G\right)\to\mathbb{C}^{r}$
satisfying \eqref{eq:vector-eigenfunctions} for some fixed $\mu$
and every $v\in V\left(G\right)$, is an eigenfunction of $H$ with
eigenvalue $\mu$.

This way of presenting coverings of $G$ and their spectra suggests
the following natural generalization: instead of picking the matrices
$\sigma_{e}$ from the group of permutation matrices, one can label
the edges of $G$ by matrices from any fixed subgroup of\footnote{We change the notation of dimension from $r$ to $d$ deliberately,
to avoid confusion: as explained below, the right point of view to
consider $r$-coverings is via a subgroup of $\mathrm{GL}_{r-1}\left(\mathbb{C}\right)$
rather than of $\mathrm{GL}_{r}\left(\mathbb{C}\right)$.} $\mathrm{GL}_{d}\left(\mathbb{C}\right)$. Since the same group $\Gamma$
may be embedded in different ways in $\mathrm{GL}_{d}\left(\mathbb{C}\right)$,
even for varying values of $d$, the right notion here is that of
group representations. Namely, a group $\Gamma$ together with a finite
dimensional representation $\pi$, which is simply a homomorphism
$\pi\colon\Gamma\to\mathrm{GL}_{d}\left(\mathbb{C}\right)$ (in this
case we say that $\pi$ is $d$-dimensional).
\begin{defn}
\label{def:gamma-pi-coverings}Let $\Gamma$ be a finite group. A
$\mathbf{\Gamma}$\textbf{-labeling} of the graph $G$ is a function
$\gamma\colon E\left(G\right)\to\Gamma$ satisfying $\gamma\left(-e\right)=\gamma\left(e\right)^{-1}$.
Denote by $\mathbf{{\cal C}_{\Gamma,G}}$\marginpar{${\cal C}_{\Gamma,G}$}
the probability space of all $\Gamma$-labelings of $G$ endowed with
uniform distribution.

Let $\pi\colon\Gamma\to\mathrm{GL}_{d}\left(\mathbb{C}\right)$ be
a representation of $\Gamma$. For any $\Gamma$-labeling $\gamma$
of $G$, we denote by $\mathbf{A_{\gamma,\pi}}$\marginpar{$A_{\gamma,\pi}$}
the $nd\times nd$ matrix obtained from $A_{G}$, the adjacency matrix
of $G$, as follows: for every $u,v\in V\left(G\right)$, replace
the $\left(u,v\right)$ entry in $A_{G}$ by the $d\times d$ block
$\sum_{e:u\to v}\pi\left(\gamma\left(e\right)\right)$ (the sum is
over all edges from $u$ to $v$, and is a zero $d\times d$ block
if there are no such edges). We say that $A_{\gamma,\pi}$ is a \emph{$\mathbf{\left(\Gamma,\pi\right)}$}\textbf{\emph{-covering}}
of the graph $G$. The $\mathbf{\pi}$\textbf{-spectrum} of the $\Gamma$-labeling
$\gamma$ is the spectrum of $A_{\gamma,\pi}$, namely, the multiset
of its eigenvalues, and is denoted $\mathrm{Spec}\left(A_{\gamma,\pi}\right)$\marginpar{$\mathrm{Spec}\left(A_{\gamma,\pi}\right)$}.
The fact that the $\pi$-spectrum is real is simple - see Claim \ref{claim: real spectrum}.

The $\left(\Gamma,\pi\right)$-covering $A_{\gamma,\pi}$ is said
to be \textbf{Ramanujan}\emph{ }if $\mathrm{Spec}\left(A_{\gamma,\pi}\right)\subseteq\left[-\rho\left(G\right),\rho\left(G\right)\right]$,
and \textbf{one-sided Ramanujan} if all the eigenvalues of $A_{\gamma,\pi}$
are at most $\rho\left(G\right)$. 
\end{defn}

Note that $\mu\in\mathrm{Spec}\left(A_{\gamma,\pi}\right)$ if and
only if there is some $0\ne f\colon V\left(G\right)\to\mathbb{C}^{d}$
satisfying
\[
\sum_{e:h\left(e\right)=v}\pi\left(\gamma\left(e\right)\right)f_{t\left(e\right)}=\mu\cdot f_{v}\,\,\,\,\,\,\,\,\,\forall v\in V\left(G\right),
\]
in accordance with \eqref{eq:vector-eigenfunctions}.

For example, if $G$ consists of a single vertex with several loops,
and if ${\cal R}$ denotes the \emph{regular} representation\footnote{\label{fn:regular rep}Namely, ${\cal R}$ is a $\left|\Gamma\right|$-dimensional
representation, and for every $g\in\Gamma$, the matrix ${\cal R}\left(g\right)$
is the permutation matrix describing the action of $g$ on the elements
of $\Gamma$ by right multiplication.} of $\Gamma$, then a $\left(\Gamma,{\cal R}\right)$-covering $A_{\gamma,{\cal R}}$
of $G$ is equivalent to the Cayley graph of $\Gamma$ with respect
to the set $\gamma\left(E\left(G\right)\right)$. The non-trivial
spectrum of this Cayley graph is given by the component ${\cal R}-\mathrm{triv}$
of ${\cal R}$ (see Section \ref{subsec:Group-Representations} for
some background). Hence, the Cayley graph is Ramanujan if and only
if the corresponding $\left(\Gamma,{\cal R}-\mathrm{triv}\right)$-covering
is Ramanujan .

As another example, the symmetric group $S_{r}$ has an $\left(r-1\right)$-dimensional
representation, called the \emph{standard }representation and denoted
$\mathrm{std}$ -- see Section \ref{subsec:Group-Representations}
for details. Every $r$-covering $H$ of $G$ corresponds to a $ $unique
$\left(S_{r},\mathrm{std}\right)$-covering, and, moreover, the new
spectrum of $H$ is precisely the spectrum of the corresponding $\left(S_{r},\mathrm{std}\right)$-covering
-- see Claim \ref{claim:equivalence-of-coverings-and-std-coverings}.
In particular, a Ramanujan $r$-covering corresponds to a Ramanujan
$\left(S_{r},\mathrm{std}\right)$-covering. When $r=2$, the standard
representation of $S_{2}\cong\mathbb{Z}/2\mathbb{Z}$ coincides with
the sign representation, and this correspondence between $2$-coverings
and $\left(S_{2},\mathrm{std}\right)$-coverings appears already in
\cite{BL06} and is used in \cite{MSS13}.

The following is, then, a natural generalization of the question concerning
ordinary Ramanujan coverings of graphs:

\begin{ques}\label{question:which-pairs-guarantee-Ramanujan-coverings}For
which pairs $\left(\Gamma,\pi\right)$ of a (finite) group $\Gamma$
with a representation $\pi\colon\Gamma\to\mathrm{GL}_{d}\left(\mathbb{C}\right)$
is it guaranteed that every connected graph $G$ has a (one-sided/fully)
Ramanujan $\left(\Gamma,\pi\right)$-covering?

\end{ques}

In this language Theorem \ref{thm:Every-graph-has-one-sided-d-ram-cover}
states that every connected graph has a one-sided Ramanujan $\left(S_{r},\mathrm{std}\right)$-covering
for every $r\ge2$. There are limitations to the possible positive
results one can hope for regarding Question \ref{question:which-pairs-guarantee-Ramanujan-coverings}
-- see Remark \ref{remark:abelian groups}. \medskip{}

Our proof of Theorem \ref{thm:Every-graph-has-one-sided-d-ram-cover}
exploits two group-theoretic properties of the pair $\left(S_{r},\mathrm{std}\right)$,
and this theorem can be generalized to any pair $\left(\Gamma,\pi\right)$
satisfying these two properties. The first property deals with the
exterior powers\footnote{See Section \ref{subsec:Group-Representations} for a definition of
exterior powers, irreducible representations and isomorphism of representations.} of $\pi$:
\begin{defn}
\label{def:P1}Let $\Gamma$ be a finite group and $\pi\colon\Gamma\to\mathrm{GL}_{d}\left(\mathbb{C}\right)$
a representation. We say that $\left(\Gamma,\pi\right)$ satisfies
$\mathbf{\left({\cal P}1\right)}$\marginpar{$\left({\cal P}1\right)$}
if all exterior powers $\bigwedge^{m}\pi,$ $0\le m\le d$, are irreducible
and non-isomorphic.
\end{defn}

The exterior power $\bigwedge^{0}\pi$ is always the trivial representation
mapping every $g\in\Gamma$ to $1\in\mathrm{GL}_{1}\left(\mathbb{C}\right)\cong\mathbb{C}^{*}$.
The next power, $\bigwedge^{1}\pi$, is simply $\pi$ itself. The
last power, $\bigwedge^{d}\pi$, is the one-dimensional representation
given by $\det\circ\pi\colon\Gamma\to\mathbb{C}^{*}$. Hence, if $\pi$
is one dimensional, $\left(\Gamma,\pi\right)$ satisfies $\left(\P1\right)$
if and only if $\pi$ is non-trivial. For example, the sign representation
of $\mathbb{Z}/2\mathbb{Z}$ used in \cite{MSS13} satisfies $\left(\P1\right)$.
If $\pi$ is 2-dimensional, $\left(\Gamma,\pi\right)$ satisfies $\left(\P1\right)$
if and only if $\pi$ is irreducible and $\pi\left(\Gamma\right)\nsubseteq\mathrm{SL}_{2}\left(\mathbb{C}\right)$.
We explain more in Section \ref{subsec:Group-Representations}.

Denote by $\mathbf{\phi_{\gamma,\pi}}$\marginpar{$\phi_{\gamma,\pi}$}
the characteristic polynomial of the $\left(\Gamma,\pi\right)$-covering
$A_{\gamma,\pi}$, namely 
\begin{equation}
\phi_{\gamma,\pi}\left(x\right)\overset{\mathrm{def}}{=}\det\left(xI-A_{\gamma,\pi}\right)=\prod_{\mu\in\mathrm{Spec\left(A_{\gamma,\pi}\right)}}\left(x-\mu\right).\label{eq:def-of-phi}
\end{equation}
Along this paper, the default distribution on $\Gamma$-labelings
of a graph $G$ is the one defined by ${\cal C}_{\Gamma,G}$ (see
Definition \ref{def:gamma-pi-coverings}). Hence, when $\Gamma$ and
$G$ are understood from the context, we use the notation $\mathbb{E}_{\gamma}\left[\phi_{\gamma,\pi}\left(x\right)\right]$
to denote the expected characteristic polynomial of a random $\left(\Gamma,\pi\right)$-covering,
the expectation being over the space ${\cal C}_{\Gamma,G}$ of $\Gamma$-labelings.

The following theorem describes the role of Property $\left({\cal P}1\right)$
in our proof:
\begin{thm}
\label{thm:P1}Let the graph $G$ be connected. For every pair $\left(\Gamma,\pi\right)$
satisfying $\left({\cal P}1\right)$ with $\dim\left(\pi\right)=d$,
the following holds:
\begin{equation}
\mathbb{E}_{\gamma}\left[\phi_{\gamma,\pi}\left(x\right)\right]=\mathbb{E}_{H\in{\cal C}_{d,G}}\M_{H}\left(x\right),\label{eq:thmP1}
\end{equation}
where $\M_{H}\left(x\right)$ is the matching polynomial\footnote{See \eqref{eq:matching poly}.}
of $H$. It particular, \textup{as long as $\left({\cal P}1\right)$
holds, $\mathbb{E}_{\gamma}\left[\phi_{\gamma,\pi}\left(x\right)\right]$
depends only on $d$ and not on $\left(\Gamma,\pi\right)$. }
\end{thm}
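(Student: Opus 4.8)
The plan is to expand both sides of~\eqref{eq:thmP1} and match them term by term, using a Leibniz-type expansion of the determinant $\det(xI - A_{\gamma,\pi})$ indexed by sub-permutations of the edge set, and then averaging over $\gamma$. Concretely, I would first recall that for any graph $H \in {\cal C}_{d,G}$, the matching polynomial $\M_H(x) = \sum_{k} (-1)^k m_k(H) x^{nd - 2k}$, where $m_k(H)$ counts $k$-matchings of $H$. The classical combinatorial fact here is that $\M_H(x)$ equals the ``odd part'' of $\det(xI - A_H)$: expanding $\det(xI - A_H)$ over permutations $\sigma$ of $V(H)$, every cyclic factor of length $\ge 3$ corresponds to a closed non-backtracking-type walk in $H$, and after averaging over $H \in {\cal C}_{d,G}$ all contributions from cycles of length $\ge 3$ that are not ``degenerate'' will vanish. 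So the heart of the matter is an averaging computation, and Property $\left({\cal P}1\right)$ will be exactly what forces cancellation on the left-hand side.

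On the left-hand side, I would expand $\phi_{\gamma,\pi}(x) = \det(xI - A_{\gamma,\pi})$ as a sum over ways of choosing, for a subset $S \subseteq V(G)$, a permutation structure on $S$ together with a choice of edge of $G$ for each arc, contributing a product of traces of the form $\operatorname{tr}\bigl(\pi(\gamma(e_1)) \pi(\gamma(e_2)) \cdots \pi(\gamma(e_\ell))\bigr)$ over each cycle, times $x^{d(n - |S|)}$ from the untouched diagonal blocks (using $\det$ of a block matrix expanded blockwise, or equivalently viewing $A_{\gamma,\pi}$ as an $nd\times nd$ matrix and grouping the Leibniz expansion by the induced set partition on the $n$ vertex-blocks). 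When we take $\mathbb{E}_\gamma$, independence of the edge labels (the labels on distinct geometric edges of $G$ are independent, and $\gamma(-e)=\gamma(e)^{-1}$) lets us compute each cycle-contribution separately. The key representation-theoretic input: for a closed walk traversing a sequence of edges, $\mathbb{E}_{g \sim \Gamma}[\pi(g) M \pi(g)^{-1}] = \frac{\operatorname{tr}(M)}{d} I$ by Schur's lemma when $\pi$ is irreducible (and more refined versions for products of several independent group elements). Using $\left({\cal P}1\right)$ — irreducibility and pairwise non-isomorphism of all $\bigwedge^m \pi$ — one shows that the only closed walks surviving the average are those that retrace each edge an even number of times in a tree-like (backtracking) fashion, i.e.\ exactly the ones that, on the covering side, correspond to matchings. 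Every genuine cycle of length $\ge 3$ in $G$, or any walk using a properly oriented edge without its reverse, averages to zero because it pairs a nontrivial character/representation against the trivial one.

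After both expansions are reduced to their surviving terms, the remaining bookkeeping is to check the two combinatorial counts agree: a surviving term on the left, corresponding to choosing a set of edges of $G$ each ``doubled'' with a $\pi$-trace of $\operatorname{tr}(\pi(\gamma(e))\pi(\gamma(e))^{-1}) = \operatorname{tr}(I) = d$ contracting to a clean factor, matches the expected count $\mathbb{E}_{H \in {\cal C}_{d,G}}[m_k(H)]$ of $k$-matchings in a random $d$-covering, with the signs $(-1)^k$ and powers of $x$ lining up. Here I would use that a $k$-matching in $G$ lifts, in a uniformly random $d$-covering, to a collection of lifted edges whose expected number of $k$-matchings in $H$ is a fixed polynomial-in-$d$ multiple of the combinatorial data of $G$ — precisely the quantity produced by the left side. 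Because the left side's surviving terms only ever see $d = \dim(\pi)$ and the trivial representation's multiplicities (never the finer structure of $\Gamma$ or $\pi$), the final ``depends only on $d$'' claim is immediate once the identity is established.

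The main obstacle I anticipate is the representation-theoretic cancellation lemma: showing that an arbitrary closed walk in $G$, which lifts to a product $\pi(\gamma(e_1))^{\pm 1}\cdots \pi(\gamma(e_\ell))^{\pm 1}$ with repeated edge-labels possibly appearing in complicated interleaved patterns, has zero expectation unless the walk is ``fully backtracking.'' The clean statement one wants is something like: for a cycle in the Leibniz expansion, $\mathbb{E}_\gamma$ of the trace product vanishes unless every edge of $G$ is used an even number of times and the pattern is non-crossing (a topological tree), and in that case the value is a power of $d$; this is where $\left({\cal P}1\right)$ — rather than mere irreducibility of $\pi$ — is genuinely needed, because the multilinear algebra of $A_{\gamma,\pi}$ interacts with all the exterior powers $\bigwedge^m\pi$ at once (the $x^{d(n-|S|)}$ bookkeeping secretly involves choosing $m$ out of $d$ coordinates in each block). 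I would isolate this as a standalone lemma about $\mathbb{E}_{g}[\pi(g)^{\otimes}\text{-type tensors}]$ and prove it by decomposing $\pi \otimes \pi^* \cong \bigoplus_m \operatorname{End}(\bigwedge^m\pi)$-style into irreducibles and invoking Schur orthogonality, the pairwise non-isomorphism in $\left({\cal P}1\right)$ guaranteeing that the only invariant vectors are the ``diagonal'' ones.
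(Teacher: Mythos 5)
Your overall strategy (expand the determinant, use independence of the edge labels and orthogonality coming from $\left({\cal P}1\right)$ to kill cross terms, then match the survivors against expected matching counts) is the right one, but the central expansion you write down for the left-hand side is not a valid identity, and that is exactly where the work lies. For a block matrix the Leibniz expansion does not organize into ``a permutation structure on $S\subseteq V(G)$ contributing $\operatorname{tr}\left(\pi\left(\gamma\left(e_{1}\right)\right)\cdots\pi\left(\gamma\left(e_{\ell}\right)\right)\right)$ per cycle, times $x^{d\left(n-|S|\right)}$ from untouched blocks'': a vertex block can be used \emph{partially}, with some of its $d$ rows and columns taken from $xI$ and the rest from edge blocks. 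Already for $G$ a single edge with label $g$ one has $\det\left(xI-A_{\gamma,\pi}\right)=\det\left(x^{2}I-\pi\left(g\right)\pi\left(g\right)^{-1}\right)=\left(x^{2}-1\right)^{d}$, which contains every power $x^{2d-2k}$, $0\le k\le d$, whereas your expansion produces only $x^{2d}$ and one trace term; so for $d\ge2$ the trace-over-cycles formula is simply false. The correct expansion (the paper's Lemma \ref{lem:det-of-sum}) chooses, for each edge, a minor of arbitrary size $0\le m\le d$ inside its block, and the decisive observation is that the determinant of an $m\times m$ minor of $\pi\left(g\right)$ is a matrix coefficient of $\left(\bigwedge^{m}\pi\right)\left(g\right)$ (Claim \ref{claim:mat-coefs-of-exterior-powers}); Peter--Weyl orthogonality for the pairwise non-isomorphic irreducibles $\bigwedge^{0}\pi,\ldots,\bigwedge^{d}\pi$ --- i.e.\ property $\left({\cal P}1\right)$ --- then annihilates every term except those where the minors on $e$ and $-e$ have equal size and transposed index sets, each survivor contributing $\binom{d}{\left|R_{e}\right|}^{-1}$. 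You gesture at this at the end (``choosing $m$ out of $d$ coordinates in each block''), but the standalone lemma you propose is not the right one: $\pi\otimes\pi^{*}\cong\operatorname{End}\left(\pi\right)$ has dimension $d^{2}$, not $\sum_{m}\binom{d}{m}^{2}$, so it cannot decompose as $\bigoplus_{m}\operatorname{End}\left(\bigwedge^{m}\pi\right)$; what is needed is orthogonality between matrix coefficients of $\bigwedge^{m}\pi$ and $\bigwedge^{m'}\pi$ for $m\ne m'$, applied to the two minors sitting on $e$ and $-e$.

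There is a second gap on the right-hand side and in the final ``bookkeeping''. Your opening claim --- that the cycle terms of $\det\left(xI-A_{H}\right)$ vanish after averaging over $H\in{\cal C}_{d,G}$, so that $\mathbb{E}_{H}\left[\M_{H}\right]$ is an averaged characteristic polynomial --- is false (already for $d=1$ the average is $G$ itself and $\det\left(xI-A_{G}\right)\ne\M_{G}$ when $G$ has cycles; the correct identity in the paper relates $\left(d+1\right)$-coverings' characteristic polynomials to the $d$-matching polynomial). Fortunately this remark is not needed: the right-hand side should be handled directly as $\sum_{k}\left(-1\right)^{k}\mathbb{E}_{H}\left[m_{k}\left(H\right)\right]x^{nd-2k}$. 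But the identification you dismiss as bookkeeping carries real content: one must compute $\mathbb{E}_{H}\left[m_{k}\left(H\right)\right]$ by grouping matchings of $H$ according to the $d$-multi-matching of $G$ they project to, obtaining the weight $W_{d}\left(m\right)$ of Proposition \ref{prop:formula-for-M_d,G} (the $\binom{d}{m\left(e\right)}^{-1}$ there is the probability that a uniform permutation matches $m\left(e\right)$ prescribed points to $m\left(e\right)$ prescribed points, and the multinomial numerator counts the choices of covered vertices), and separately one must count the surviving index data of the determinant expansion, check that its sign is $\left(-1\right)^{\left|m\right|}$, and see that its multiplicity reproduces exactly that multinomial numerator. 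None of this appears in your sketch, and with the trace-based expansion it could not come out, because that expansion never produces the partial-block data --- the multi-matchings with $m\left(v\right)\le d$ --- which indexes both sides of \eqref{eq:thmP1}.
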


Namely, if $\left(\Gamma,\pi\right)$ satisfies $\left({\cal P}1\right)$,
then the expected characteristic polynomial of a random $\left(\Gamma,\pi\right)$-covering
is equal to the expected matching polynomial of a $d$-covering of
$G$. In particular, as we show below, $\mathrm{std}$ is a $d$-dimensional
representation of $S_{d+1}$ satisfying $\left({\cal P}1\right)$,
and so
\[
\frac{\mathbb{E}_{H\in{\cal C}_{d+1,G}}\left[\det\left[xI-A_{H}\right]\right]}{\det\left[xI-A_{G}\right]}=\mathbb{E}_{H\in{\cal C}_{d,G}}\M_{H}\left(x\right).
\]
This generalizes an old result from \cite{GG81} for the case $d=1$,
which is essential in \cite{MSS13}: the expected characteristic polynomial
of a $2$-covering of $G$ is equal to the characteristic polynomial
of $G$ times the matching polynomial of $G$. Together with Theorem
\ref{thm:M_d,G is Ramanujan} below, we get that whenever $\left(\Gamma,\pi\right)$
satisfies $\left({\cal P}1\right)$ and $G$ has no loops, the expected
characteristic polynomial $\mathbb{E}_{\gamma}\left[\phi_{\gamma,\pi}\left(x\right)\right]$
has only real-roots, all of which lie inside the Ramanujan interval
$\left[-\rho,\rho\right]$. We call the right hand side of \eqref{eq:thmP1}
the \textbf{$d$-matching polynomial }of $G$ -- see Definition \ref{def:d-matching-poly}.\medskip{}

To define the second property we need the notion of \emph{pseudo-reflections:
}a matrix $A\in\mathrm{GL}_{d}\left(\mathbb{C}\right)$ is called
a \textbf{\emph{pseudo-reflection}} if $A$ has finite order and $\mathrm{rank}\left(A-I\right)=1$.
Equivalently, $A$ is a pseudo-reflection if it is conjugate to a
diagonal matrix of the form 
\[
\left(\begin{array}{cccc}
\lambda\\
 & 1\\
 &  & \ddots\\
 &  &  & 1
\end{array}\right)
\]
with $\lambda\ne1$ some root of unity.
\begin{defn}
\label{def:P2}Let $\Gamma$ be a finite group and $\pi\colon\Gamma\to\mathrm{GL}_{d}\left(\mathbb{C}\right)$
a representation. We say that $\left(\Gamma,\pi\right)$ satisfies
$\mathbf{\left({\cal P}2\right)}$\marginpar{$\left({\cal P}2\right)$}
if $\pi\left(\Gamma\right)$ is a complex reflection group, namely,
if it is generated by pseudo-reflections.
\end{defn}

Complex reflection groups are a generalization of Coxeter groups.
The most well known example is the group of permutation matrices in
$\mathrm{GL}_{d}\left(\mathbb{C}\right)$: this group is generated
by transpositions which are genuine reflections. In the related case
of $\left(S_{r},\mathrm{std}\right)$, the image of every transposition
is a pseudo-reflection as well, hence $\left(S_{r},\mathrm{std}\right)$
satisfies $\left(\P2\right)$ -- see Section \ref{subsec:Group-Representations}
for details. The complete classification of pairs $\left(\Gamma,\pi\right)$
satisfying $\left(\P2\right)$ is well known -- see Section \ref{sec:applications}.

The following theorem manifests the role of $\left({\cal P}2\right)$
in our proof:
\begin{thm}
\label{thm:P2}Let $G$ be a finite, loopless graph. For every pair
$\left(\Gamma,\pi\right)$ satisfying $\left({\cal P}2\right)$, the
following holds:

\begin{itemize}
\item $\mathbb{E}_{\gamma}\left[\phi_{\gamma,\pi}\left(x\right)\right]$
is real rooted.
\item There exists a $\left(\Gamma,\pi\right)$-covering $A_{\gamma,\pi}$
with largest eigenvalue at most the largest root of $\mathbb{E}_{\gamma}\left[\phi_{\gamma,\pi}\left(x\right)\right]$.
\end{itemize}
\end{thm}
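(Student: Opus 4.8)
The plan is to run the \emph{method of interlacing polynomials} of \cite{MSS13,MSS15}. After replacing $\pi$ by a conjugate representation (which changes neither $\mathrm{Spec}(A_{\gamma,\pi})$ nor the law of the covering) we may assume $\pi$ is unitary, so that every $A_{\gamma,\pi}$ is Hermitian and every $\phi_{\gamma,\pi}$ is real rooted. Fix an ordering $e_{1},\dots,e_{m}$ of $E^{+}(G)$ and build a rooted tree of depth $m$ whose vertices at depth $k$ are the partial $\Gamma$-labelings $(g_{1},\dots,g_{k})\in\Gamma^{k}$ of $e_{1},\dots,e_{k}$; to such a vertex attach
\[
q_{g_{1},\dots,g_{k}}(x)\ :=\ \mathbb{E}\!\left[\phi_{\gamma,\pi}(x)\ \middle|\ \gamma(e_{i})=g_{i}\ \text{for }i\le k\right],
\]
the expectation being over the remaining labels drawn from ${\cal C}_{\Gamma,G}$. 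The root carries $\mathbb{E}_{\gamma}[\phi_{\gamma,\pi}]$, the leaves carry the individual $\phi_{\gamma,\pi}$, and each vertex is the average of its $|\Gamma|$ children. If this tree is an \emph{interlacing family} --- that is, every $q_{g_{1},\dots,g_{k}}$ is real rooted and at every internal vertex the children have a common interlacer --- then the first bullet is exactly the real-rootedness at the root, and the second bullet follows from the standard fact that an interlacing family has a leaf whose largest root is at most the largest root of the polynomial at the root.

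Thus everything reduces to verifying the interlacing-family property, and for that I would use the usual criterion that a tuple of real rooted polynomials of equal degree and equal leading-coefficient sign admits a common interlacer exactly when all of its convex combinations are real rooted. Since for any weights $\lambda_{g}\ge 0$ with $\sum_{g}\lambda_{g}=1$ one has
\[
\sum_{g\in\Gamma}\lambda_{g}\,q_{g_{1},\dots,g_{k},g}\ =\ \mathbb{E}_{\gamma\sim D}\!\left[\phi_{\gamma,\pi}\right],
\]
where $D$ is the product distribution placing the point mass $\delta_{g_{i}}$ on $e_{i}$ for $i\le k$, the distribution $(\lambda_{g})_{g}$ on $e_{k+1}$, and the uniform distribution on every later edge, the whole theorem follows from the single statement: \emph{for every product distribution $D=\prod_{e\in E^{+}(G)}D_{e}$ on $\Gamma$-labelings of $G$, the polynomial $\mathbb{E}_{\gamma\sim D}[\phi_{\gamma,\pi}]$ is real rooted.} The uniform case of this is already the first bullet.

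I would prove this last statement by introducing the edges one at a time, showing that the operation ``replace a fixed label on an edge $e$ by one drawn from $D_{e}$'' carries a real rooted characteristic polynomial to a real rooted one, and --- so that the induction can continue through the remaining edges --- that it does so compatibly with the common-interlacer data. Concretely, having fixed the labels of every edge other than $e$ (so that, $\pi$ being unitary, $A_{\gamma,\pi}$ is Hermitian, and, $G$ being loopless, the two orientations of $e$ feed two distinct off-diagonal blocks), the polynomial $\phi_{\gamma,\pi}$ viewed as a function of $g=\gamma(e)$ has the shape
\[
\det\!\bigl(xI-H-E_{t(e)\,h(e)}\otimes\pi(g)-E_{h(e)\,t(e)}\otimes\pi(g)^{-1}\bigr),
\]
with $H$ Hermitian and independent of $g$, and one must show that this family, indexed by $g$ ranging over $\pi(\Gamma)$, has a common interlacer. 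This is the point at which property $({\cal P}2)$ enters: since $\pi(\Gamma)$ is a complex reflection group, each $\pi(g)$ is a product of pseudo-reflections, each differing from the identity by a rank-one matrix, and the complex-reflection structure lets one reduce the average over $\pi(\Gamma)$ to a controlled sequence of essentially rank-one operations on characteristic polynomials; each such step is analyzed through the matrix determinant lemma and Cauchy interlacing, yielding the real-rootedness and the required common interlacers.

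This single-edge claim for complex reflection groups is the technical heart of the argument, and, as announced in the introduction, the core of its proof is adapted from \cite{MSS15}. It is also the step I expect to be the main obstacle: making the rank-one reduction genuinely work for an arbitrary complex reflection group (rather than just for $(S_{r},\mathrm{std})$, which is the relevant case in \cite{MSS13,MSS15}) is the crux, while the tree construction and the passage from ``interlacing family'' to ``a good leaf exists'' are the by-now routine bookkeeping of the interlacing-polynomials method.
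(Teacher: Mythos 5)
The reduction at the heart of your argument is to a false statement. You reduce the theorem to: \emph{for every product distribution $D=\prod_{e\in E^{+}(G)}D_{e}$ on $\Gamma$-labelings, $\mathbb{E}_{\gamma\sim D}[\phi_{\gamma,\pi}]$ is real rooted}, and you need this already in the special form produced by your tree (point masses on $e_{1},\dots,e_{k}$, an arbitrary distribution on $e_{k+1}$, uniform beyond). This fails. Take $G$ the cycle of length two, $\Gamma=S_{3}$, $\pi=\mathrm{std}$ (which satisfies $\left({\cal P}2\right)$), label one edge deterministically by the identity and the other by either the identity or the $3$-cycle $(1\,2\,3)$, each with probability $\tfrac{1}{2}$; the expected characteristic polynomial is $\tfrac{1}{2}\bigl((x^{2}-4)^{2}+(x^{2}-1)^{2}\bigr)$, which has no real roots at all (this is exactly the counterexample the paper records in a footnote of Section 1.3). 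By the very criterion you invoke (common interlacing if and only if all convex combinations are real rooted), this shows that the $|\Gamma|$ children of the node $\gamma(e_{1})=\mathrm{id}$ in your tree admit no common interlacer, so your tree --- branching over the entire group element of one edge at a time --- is simply not an interlacing family once $r\ge 3$ replaces the two-way branching of \cite{MSS13}. Independence across edges, even with uniform labels on the untouched edges, does not by itself give real-rootedness, so both your route to the first bullet and your ``good leaf'' argument for the second collapse at this point.

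The fix, which is the actual content of the paper's proof, is to refine the branching: write the uniform label on each edge as a product of \emph{independent} random variables, each supported on at most two values whose ratio is mapped by $\pi$ to a pseudo-reflection (``rank-one random variables''), and let the interlacing tree branch over the values of these finer variables rather than over whole group elements. Real-rootedness is then proved precisely for such product-of-rank-one edge distributions: the key steps are a rank-one linearity lemma for the expected characteristic polynomial and the observation that $DAD^{*}-A$ is a rank-two, trace-zero Hermitian matrix $uu^{*}-vv^{*}$ when $\mathrm{rank}(D-I)=1$, so that two applications of the rank-one step yield a common interlacing (this is the adaptation of \cite[Theorem 3.3]{MSS15}). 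Property $\left({\cal P}2\right)$ enters exactly here: it guarantees pseudo-reflection generators exist, and the uniform distribution on $\Gamma$ is reached only as a \emph{limit} of such products via a lazy random walk on these generators, with a continuity and accumulation-point argument supplying both bullets of the theorem. Your closing sentence gestures at ``reducing the average over $\pi(\Gamma)$ to rank-one operations,'' which is indeed the crux, but as organized your proof commits to averaging a full edge label in a single step of the tree, and that is precisely the step that provably breaks.
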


The proof of Theorem \ref{thm:P2} is based on the method of interlacing
polynomials. The core of the argument is inspired by \cite[Theorem 3.3]{MSS15}. 

\medskip{}

\noindent We can now state our generalization of Theorem \ref{thm:Every-graph-has-one-sided-d-ram-cover}:
\begin{thm}
\label{thm:gamma-pi-one-sided-rmnjn-covering}Let $\Gamma$ be a finite
group and $\pi\colon\Gamma\to\mathrm{GL}_{d}\left(\mathbb{C}\right)$
a representation such that $\left(\Gamma,\pi\right)$ satisfies $\left({\cal P}1\right)$
and $\left({\cal P}2\right)$. Then every connected, loopless graph
$G$ has a one-sided Ramanujan $\left(\Gamma,\pi\right)$\textup{-covering}.
\end{thm}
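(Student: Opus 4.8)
The plan is to chain together the three main ingredients that are already in place. Fix a finite group $\Gamma$ and a representation $\pi\colon\Gamma\to\mathrm{GL}_{d}\left(\mathbb{C}\right)$ such that $\left(\Gamma,\pi\right)$ satisfies both $\left({\cal P}1\right)$ and $\left({\cal P}2\right)$, and let $G$ be a connected, loopless graph. The whole argument will hinge on one object, the expected characteristic polynomial $\mathbb{E}_{\gamma}\left[\phi_{\gamma,\pi}\left(x\right)\right]$, and on showing that each of the three theorems says something about this single polynomial.

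First I would invoke Theorem \ref{thm:P1}: since $\left(\Gamma,\pi\right)$ satisfies $\left({\cal P}1\right)$, the expected characteristic polynomial of a random $\left(\Gamma,\pi\right)$-covering coincides with the $d$-matching polynomial of $G$,
\[
\mathbb{E}_{\gamma}\left[\phi_{\gamma,\pi}\left(x\right)\right]=\mathbb{E}_{H\in{\cal C}_{d,G}}\M_{H}\left(x\right).
\]
Next, because $G$ is loopless, I would apply Theorem \ref{thm:M_d,G is Ramanujan} to conclude that this $d$-matching polynomial is real-rooted and that all of its roots lie in the Ramanujan interval $\left[-\rho\left(G\right),\rho\left(G\right)\right]$; in particular, the largest root of $\mathbb{E}_{\gamma}\left[\phi_{\gamma,\pi}\left(x\right)\right]$ is at most $\rho\left(G\right)$.

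Finally I would use Theorem \ref{thm:P2}: since $\left(\Gamma,\pi\right)$ satisfies $\left({\cal P}2\right)$, the polynomial $\mathbb{E}_{\gamma}\left[\phi_{\gamma,\pi}\left(x\right)\right]$ is real-rooted and there exists a $\Gamma$-labeling $\gamma\colon E\left(G\right)\to\Gamma$ for which the largest eigenvalue of $A_{\gamma,\pi}$ does not exceed the largest root of $\mathbb{E}_{\gamma}\left[\phi_{\gamma,\pi}\left(x\right)\right]$. Combining this with the previous step, the largest eigenvalue of $A_{\gamma,\pi}$ is at most $\rho\left(G\right)$, so by Definition \ref{def:gamma-pi-coverings} the covering $A_{\gamma,\pi}$ is a one-sided Ramanujan $\left(\Gamma,\pi\right)$-covering of $G$. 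Theorem \ref{thm:Every-graph-has-one-sided-d-ram-cover} is then recovered as the special case $\left(\Gamma,\pi\right)=\left(S_{r},\mathrm{std}\right)$, which satisfies both $\left({\cal P}1\right)$ and $\left({\cal P}2\right)$.

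At this level there is essentially no obstacle: the entire difficulty has been pushed into the three cited theorems — the real-rootedness and root-location for the $d$-matching polynomial, the representation-theoretic identity behind $\left({\cal P}1\right)$, and the interlacing-families argument for complex reflection groups (adapted from \cite{MSS15}) behind $\left({\cal P}2\right)$ — so what remains is just the bookkeeping of noticing that Theorem \ref{thm:P2} controls a covering's top eigenvalue by the top root of $\mathbb{E}_{\gamma}\left[\phi_{\gamma,\pi}\left(x\right)\right]$, while Theorems \ref{thm:P1} and \ref{thm:M_d,G is Ramanujan} together confine that root to $\left[-\rho,\rho\right]$. The only point I would double-check is that the hypothesis ``loopless'' is invoked precisely where Theorem \ref{thm:M_d,G is Ramanujan} is used, and that no assumption on $G$ beyond connectedness and looplessness enters the argument.
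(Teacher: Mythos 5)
Your proposal is correct and follows essentially the same route as the paper, which explicitly deduces Theorem \ref{thm:gamma-pi-one-sided-rmnjn-covering} from Theorems \ref{thm:P1}, \ref{thm:P2} and \ref{thm:M_d,G is Ramanujan} in exactly this way (with the loopless hypothesis entering through Theorems \ref{thm:P2} and \ref{thm:M_d,G is Ramanujan}, as you note). The only cosmetic difference is that the paper obtains the real-rootedness part of Theorem \ref{thm:M_d,G is Ramanujan} itself from Theorems \ref{thm:P1} and \ref{thm:P2} applied to some auxiliary pair such as $\left(S_{d+1},\mathrm{std}\right)$, whereas you simply cite it, which is legitimate.
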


In Section \ref{sec:applications} we elaborate the combinatorial
consequences of Theorem \ref{thm:gamma-pi-one-sided-rmnjn-covering}.

\subsubsection*{Some Remarks}
\begin{rem}
\label{remark: compact}Some of the above definitions and results
apply also to compact groups $\Gamma$. For example, if $\Gamma$
is compact, we can let ${\cal C}_{\Gamma,G}$ be the probability space
of all $\Gamma$-labelings of $G$ endowed with Haar measure on $\Gamma^{E^{+}\left(G\right)}$.
If $\pi\colon\Gamma\to\mathrm{GL}_{d}\left(\mathbb{C}\right)$ is
unitary (see Section \ref{subsec:Group-Representations}), Property
$\left({\cal P}1\right)$ makes perfect sense, and Theorem \ref{thm:P1}
(and its proof) apply. Two interesting instances are specified in
Corollary \ref{cor:O(d) and U(d)}. There are also ways to generalize
$\left({\cal P}2\right)$ for compact groups in a way that the proof
of Theorem \ref{thm:P2} will apply.
\end{rem}

\begin{rem}
\label{remark:psl2}Suppose that $\pi$ is an $r$-dimensional permutation
representation of $\Gamma$, meaning that $\pi\left(\Gamma\right)$
is a group of permutation matrices in $\mathrm{GL}_{r}\left(\mathbb{C}\right)$
(see Section \ref{subsec:Permutation-Representations}). Suppose further
that $\left|\Gamma\right|$ is much smaller than $r!$. This means
that $\left(\Gamma,\pi\right)$-coverings of a graph $G$ correspond
to a small subset of all $r$-coverings of $G$. Every permutation
representation is composed of\footnote{See Section \ref{subsec:Group-Representations} for the meaning of
this.} the trivial representation and an $\left(r-1\right)$-dimensional
representation $\pi'$. A positive answer to Question \ref{question:which-pairs-guarantee-Ramanujan-coverings}
for $\pi'$ can yield a relatively fast algorithm to construct (bipartite-)
Ramanujan graphs.\\
A concrete compelling example is given by $\Gamma=\mathrm{PSL}_{2}\left(\mathbb{F}_{q}\right)$
and $\pi$ the action of $\Gamma$ by permutations on the projective
line $\mathbb{P}^{1}\left(\mathbb{F}_{q}\right)$. In this case, $\pi$
is $\left(q+1\right)$-dimensional, and $\pi'$ is irreducible of
dimension $q$. Here $\left|\Gamma\right|\approx q^{3}/2$. The pair
$\left(\mathrm{PSL}_{2}\left(\mathbb{F}_{q}\right),\pi'\right)$ satisfies
neither $\left(\P1\right)$ nor $\left(\P2\right)$, so the results
in this paper do not apply. However, there are only, roughly, $q^{3m}$
disjoint $\left(\mathrm{PSL}_{2}\left(\mathbb{F}_{q}\right),\pi'\right)$-coverings
of a graph $G$ with $m$ edges. So a positive answer to Question
\ref{question:which-pairs-guarantee-Ramanujan-coverings} in this
case means, for example, one can construct a Ramanujan $\left(q+1\right)$-covering
of the graph with $k$ edges connecting 2 vertices $\vcenter{\xymatrix@1@R=2pt@C=25pt{ 
\bullet \ar@{-}@/^0.4pc/[r]_{\filleddiamond} \ar@{-}@/^.2pc/[r] \ar@{-}@/_0.41pc/[r] \ar@{-}[r]  & \bullet}}$
, namely, a bipartite $k$-regular Ramanujan graph on $2\left(q+1\right)$
vertices, in time, roughly, $q^{3k}$. This example is especially
compelling because this subgroup of permutations is sparse and well-understood,
and also because the group $\mathrm{PSL}_{2}\left(q\right)$ has proven
useful before in constructing Ramanujan graphs: the explicit construction
of Ramanujan graphs in \cite{LPS88,Mar88} uses Cayley graphs of such
groups. 
\end{rem}

\begin{rem}
\label{remark:abelian groups}Returning to Question \ref{question:which-pairs-guarantee-Ramanujan-coverings},
we stress that not every pair $\left(\Gamma,\pi\right)$ guarantees
Ramanujan coverings. For example, let ${\cal R}$ denote the regular
representation of $\Gamma$ (see Footnote \ref{fn:regular rep}) and
assume that $\mathrm{rank}\left(\Gamma\right)>\mathrm{rank}\left(\pi_{1}\left(G\right)\right)$,
where $\mathrm{rank}\left(\Gamma\right)$ marks the minimal size of
a generating set of $\Gamma$. Then there is no surjective homomorphism
$\pi_{1}\left(G\right)\to\Gamma$ (see Claim \ref{claim: equivalences of labelings}),
so every $\left(\Gamma,{\cal R}\right)$-covering is necessarily disconnected,
and the new spectrum contains the trivial, Perron-Frobenius eigenvalue
of $G$.

Another counterexample to Question \ref{question:which-pairs-guarantee-Ramanujan-coverings}
is that of regular representations of abelian groups. In particular,
several authors asked about the existence of Ramanujan ``\textbf{shift
lifts}'' of graphs: whether every graph has an $r$-covering where
all edges are labeled by cyclic shift permutations, namely, by powers
of the permutation $\left(1\,2\,\ldots\,r\right)\in S_{r}$. This
is equivalent to $\left(C_{r},{\cal R}\right)$-coverings where $C_{r}$
is the cyclic group of size $r$. We claim there cannot be Ramanujan
$r$-coverings of this kind when $r$ is large. The reason is the
same argument showing that large abelian groups do not admit Ramanujan
Cayley graphs with a small number of generators: for example, the
balls in such a covering grow only polynomially, where in expander
graphs they grow exponentially fast. Concretely, simulations we conducted
show that the bouquet with one vertex and two loops has no Ramanujan
36-covering where the loops are labeled by cyclic permutations. It
also does not have any one-sided Ramanujan $73$-covering with cyclic
permutations. 
\end{rem}

\subsection{Overview of the proof\label{subsec:Overview-of-the}}

In this subsection we give a general outline of our proof. The definitions
and notations of this section are meant only to convey the spirit
of the arguments and are not used outside the current subsection.

The proof of Theorem \ref{thm:Every-graph-has-one-sided-d-ram-cover}
and its generalization, Theorem \ref{thm:gamma-pi-one-sided-rmnjn-covering},
follows the general proof strategy from \cite{MSS13}. A key point
in this strategy, is the following elementary yet very useful fact: 
\begin{fact}[{E.g.~proof of \cite[Lemma~4.2]{MSS13}}]
\label{fact:moving along straight lines} Assume that $f,g\in\mathbb{R}\left[x\right]$
are two polynomials of degree $n$ so that $\left(1-\lambda\right)f+\lambda g$
is real rooted for every $\lambda\in\left[0,1\right]$. Then, for
every $1\le i\le n$, the $i$-th root of $\left(1-\lambda\right)f+\lambda g$
moves monotonically when $\lambda$ moves from $0$ to $1$.
\end{fact}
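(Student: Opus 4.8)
The plan is to exploit one elementary but decisive observation: for every \emph{fixed} real number $a$, the value $\left(1-\lambda\right)f(a)+\lambda g(a)$ is an \emph{affine} function of $\lambda$. Writing $p_{\lambda}:=\left(1-\lambda\right)f+\lambda g$, this means that the set $\{\lambda\in\left[0,1\right]:p_{\lambda}(a)=0\}$ is either all of $\left[0,1\right]$ — which happens precisely when $f(a)=g(a)=0$ — or a single point, or empty. In particular: \emph{if a given real number $a$ is a root of $p_{\lambda_{1}}$ and of $p_{\lambda_{2}}$ for two distinct parameters $\lambda_{1}\ne\lambda_{2}$, then $a$ is a common root of $f$ and $g$.} This is the whole point; the rest is bookkeeping.

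First I would set up the roots as continuous functions of $\lambda$. Since $p_{\lambda}$ is real rooted of degree $n$ for every $\lambda\in\left[0,1\right]$, its leading coefficient — itself an affine function of $\lambda$ — never vanishes on $\left[0,1\right]$ and hence has constant sign there; replacing $\left(f,g\right)$ by $\left(-f,-g\right)$ if needed, assume it is positive. Then $p_{\lambda}$ has exactly $n$ real roots counted with multiplicity, and listing them in non-decreasing order $x_{1}(\lambda)\le x_{2}(\lambda)\le\cdots\le x_{n}(\lambda)$ gives functions that are continuous on $\left[0,1\right]$: the coefficients of $p_{\lambda}$ are (affine, hence) continuous in $\lambda$, the unordered root multiset depends continuously on the coefficients for polynomials of fixed degree, and sorting preserves continuity. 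The $i$-th root of $p_{\lambda}$ is by definition $x_{i}(\lambda)$.

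Now I would argue by contradiction. Suppose $x_{i}$ is not monotonic on $\left[0,1\right]$. Since $x_{i}$ is continuous, there exist parameters $\lambda_{1}<\lambda_{2}<\lambda_{3}$ with $x_{i}(\lambda_{2})>\max\left(x_{i}(\lambda_{1}),x_{i}(\lambda_{3})\right)$ or $x_{i}(\lambda_{2})<\min\left(x_{i}(\lambda_{1}),x_{i}(\lambda_{3})\right)$; assume the former, the latter being symmetric. Pick any value $c$ with $\max\left(x_{i}(\lambda_{1}),x_{i}(\lambda_{3})\right)<c<x_{i}(\lambda_{2})$. The intermediate value theorem applied to $x_{i}$ on $\left[\lambda_{1},\lambda_{2}\right]$ and on $\left[\lambda_{2},\lambda_{3}\right]$ produces $\mu_{1}\in\left(\lambda_{1},\lambda_{2}\right)$ and $\mu_{2}\in\left(\lambda_{2},\lambda_{3}\right)$ with $x_{i}(\mu_{1})=x_{i}(\mu_{2})=c$; in particular $\mu_{1}\ne\mu_{2}$, so $c$ is a root of $p_{\mu_{1}}$ and of $p_{\mu_{2}}$. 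By the observation above, $c$ is then a common root of $f$ and $g$. But $c$ ranged over an entire open interval, so $f$ would have infinitely many roots — impossible, since $f\ne0$ (the case $n=0$ is vacuous, there being no roots to track). This contradiction proves that every $x_{i}$ is monotonic on $\left[0,1\right]$.

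The only place needing care — and the closest thing to an obstacle — is the bookkeeping around repeated roots and the ordering convention: one must make sure that "the $i$-th root" always refers to the $i$-th entry of the sorted list $x_{1}(\lambda)\le\cdots\le x_{n}(\lambda)$, and that continuity and the intermediate value theorem are invoked for these sorted functions, not for individual "root branches," which may coincide or cross. Once that convention is fixed, the affine-in-$\lambda$ observation does all the real work.
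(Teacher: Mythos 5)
Your proof is correct, but it follows a genuinely different route from the one the paper relies on. The paper offers no argument of its own for this Fact: it points to the proof of \cite[Lemma~4.2]{MSS13}, i.e.\ the interlacing machinery recorded in Facts \ref{fact:interlace} and \ref{fact:common interlacing} --- since every convex combination of two points on the segment between $f$ and $g$ is again a convex combination of $f$ and $g$ and hence real rooted, any two such points have a common interlacing, so the $i$-th root of an intermediate point lies in the convex hull of the $i$-th roots of the endpoints; applying this to every triple $\lambda_{1}<\lambda_{2}<\lambda_{3}$ gives the betweenness property that is equivalent to monotonicity. You instead argue directly: for fixed $a$ the map $\lambda\mapsto\left(1-\lambda\right)f\left(a\right)+\lambda g\left(a\right)$ is affine, so a number that is a root at two distinct parameters must be a common root of $f$ and $g$, and a non-monotone (continuous, sorted) root function would, via the intermediate value theorem, produce an entire interval of such numbers, forcing the nonzero polynomial $f$ to have infinitely many roots. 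This is more elementary and self-contained --- it needs only the standard continuity of the ordered roots in the coefficients plus the intermediate value theorem, and no interlacing facts --- whereas the route through Fact \ref{fact:common interlacing} buys more information (the common interlacing itself and the convex-hull location of $r_{i}$ of the average), which the paper needs elsewhere in any case. One small point to flag: you assume the leading coefficient of $\left(1-\lambda\right)f+\lambda g$ never vanishes on $\left[0,1\right]$, equivalently that the degree stays equal to $n$; this is not literally part of the hypothesis, but it is the only reading under which the $i$-th root for $1\le i\le n$ is well defined for all $\lambda$, and it holds automatically in all of the paper's applications, where the polynomials in question are monic of the same degree.
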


Namely, if the roots of a polynomial $h$ are all real and denoted
$r_{n}\left(h\right)\le\ldots\le r_{2}\left(h\right)\le r_{1}\left(h\right)$,
then Fact \ref{fact:moving along straight lines} means that the function
$\lambda\mapsto r_{i}\left(\left(1-\lambda\right)f+\lambda g\right)$
is monotone (non-decreasing or non-increasing) for every $i$. We
give some more background and references in Section \ref{subsec:Interlacing-Families-of}. 

The starting point of the strategy of \cite{MSS13} is that instead
of considering ``discrete'' coverings of the graph $G$, one can
consider convex combinations of coverings, or more precisely, convex
combinations of \emph{characteristic polynomials} of coverings. Concretely,
let $\Delta_{r}\left(G\right)$\marginpar{$\Delta_{r}\left(G\right)$}
be the simplex of all probability distributions on $r$-coverings
of the graph $G$. The simplex $\Delta_{r}\left(G\right)$ has $\left|{\cal C}_{r,G}\right|=\left(r!\right)^{\left|E^{+}\left(G\right)\right|}$
vertices. To the vertex corresponding to the $r$-covering $H\in{\cal C}_{r,G}$
we associate the characteristic polynomial of the \emph{new} spectrum
of $H$, namely,
\[
\phi_{H}=\frac{\det\left(xI-A_{H}\right)}{\det\left(xI-A_{G}\right)}=\prod_{\mu\in\mathrm{New\,Spectrum\,of}\,H}\left(x-\mu\right)=\det\left(xI-A_{\sigma,\mathrm{std}}\right)
\]
where $\mathrm{std}$ is the standard $\left(r-1\right)$-dimensional
representation of $S_{r}$, and $\sigma$ is the $S_{r}$-labeling
of $G$ corresponding to $H$.

Every point $p\in\Delta_{r}\left(G\right)$ is associated with a polynomial
$\phi_{p}$, the corresponding weighted average of the $\left\{ \phi_{H}\right\} _{H\in{\cal C}_{r,G}}$:
if $p=\sum_{H\in{\cal C}_{r,G}}a_{H}\cdot H$, then $\phi_{p}=\sum_{H\in{\cal C}_{r,G}}a_{H}\cdot\phi_{H}$.
The proof now consists of two main parts: \\
$\left(i\right)$ Find a Ramanujan point $p_{Ram}\in\Delta_{r}\left(G\right)$,
namely a point whose corresponding polynomial $\phi_{p_{Ram}}$ is
real rooted with all its roots inside the Ramanujan interval $\left[-\rho,\rho\right]$.\\
$\left(ii\right)$ find a real-rooted region inside $\Delta_{r}\left(G\right)$
which contains $p_{Ram}$ and which allows one to use Fact \ref{fact:moving along straight lines}
and move along straight lines from $p_{Ram}$ to one of the vertices. 

We now explain each part in greater detail and explain how it is obtained
in the current paper:

\subsubsection*{Part $\left(i\right)$: a Ramanujan point inside $\Delta_{r}\left(G\right)$}

In the case $r=2$ studied in \cite{MSS13}, the center of $\Delta_{r}\left(G\right)$
is a Ramanujan point. This is the point corresponding to the uniform
distribution over all $2$-coverings of $G$. As mentioned above,
the polynomial at this point is the matching polynomial of $G$ \cite{GG81}.
The fact that this polynomial is Ramanujan, i.e.~is real-rooted with
all its roots in $\left[-\rho,\rho\right]$, is a classical fact due
to Heilmann and Lieb \cite{heilmann1972theory} -- see Theorem \ref{thm:1-matching-poly-is-ramanujan}
below. 

It turns out that for larger values of $r$, the center point of $\Delta_{r}\left(G\right)$
is still Ramanujan, yet this result is new, more involved, and relies
on Property $\left(\P1\right)$. Theorem \ref{thm:P1} states that
the polynomial associated with the center point is the $\left(r-1\right)$-matching
polynomial of $G$, defined as the average of the matching polynomials
of all $\left(r-1\right)$-coverings of $G$. In Section \ref{subsec:The--Matching-Polynomial}
we explain why every real root of this polynomial lies in $\left[-\rho,\rho\right]$.
That all its roots are real follows from the fact that the center
point of $\Delta_{r}\left(G\right)$ is contained in the real-rooted
region we find in Part $\left(ii\right)$ below. Note that although
the Ramanujan point we find happens to be the center of the simplex,
this is not required in general by the proof strategy.

\subsubsection*{Part $\left(ii\right)$: a real-rooted region inside $\Delta_{r}\left(G\right)$}

The main technical part of \cite{MSS13} consists of showing that
if $p\in\Delta_{2}\left(G\right)$ is a probability distribution on
$2$-coverings of $G$ in which the $S_{2}$-label of every $e\in E^{+}\left(G\right)$
is chosen independently, then the associated polynomial $\phi_{p}$
is real rooted. This gives a real-rooted region inside $\Delta_{2}\left(G\right)$.
For example, if the $S_{2}$-label of every $e\in E^{+}\left(G\right)$
is uniform among the two possibilities, we get the center point, which
provides yet another proof for that the matching polynomial of $G$
is real rooted. 

\begin{wrapfigure}{R}{0.4\columnwidth}%
\noindent \begin{centering}
\includegraphics[viewport=100bp 250bp 550bp 570bp,scale=0.5]{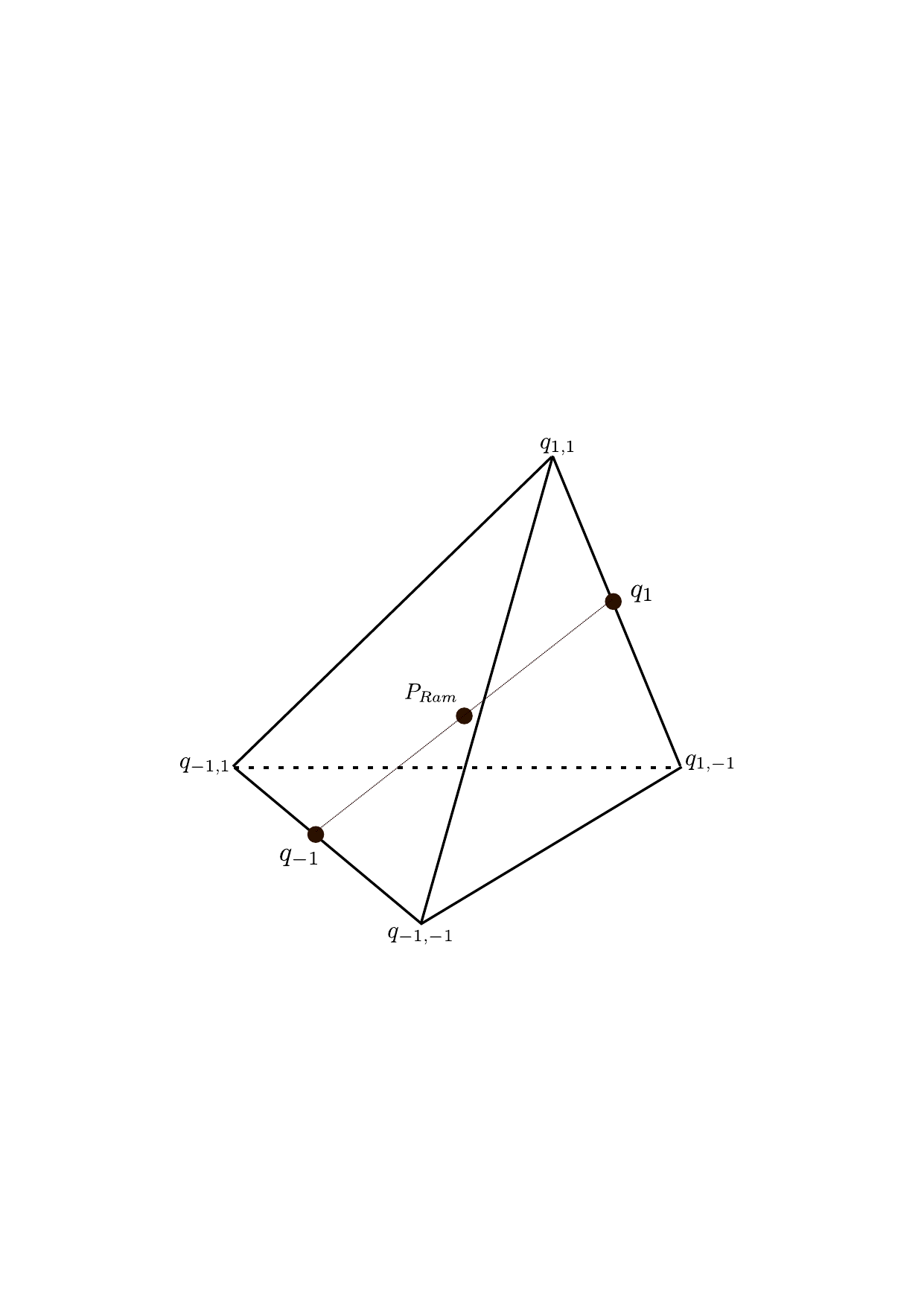}
\par\end{centering}
\caption{\label{fig:simplex} The simplex $\Delta_{2}\left(G\right)$ when
$\left|E^{+}\left(G\right)\right|=2$}
\end{wrapfigure}%
The crux of the matter is that among this family of distributions,
if one perturbs the distribution only on one particular edge $e\in E^{+}\left(G\right)$,
the corresponding point in $\Delta_{2}\left(G\right)$ moves along
a straight interval, which allows us to use Fact \ref{fact:moving along straight lines}. 

More concretely, order the edges in $E^{+}\left(G\right)$ by $e_{1},e_{2},\ldots$
in an arbitrary fashion. Start at the Ramanujan point $p_{Ram}$ --
the center point of $\Delta_{2}\left(G\right)$. Let $q_{1}$ denote
the point in $\Delta_{2}\left(G\right)$ where $e_{1}$ is labeled
by $\mathrm{id}\in S_{2}$ and all remaining edges are labeled uniformly
at random and independently. Let $q_{-1}$ be another point with the
same definition except that $e_{1}$ is labeled by $\left(1\,2\right)\in S_{2}$.
Now $p_{Ram}$ lies on the straight interval connecting $q_{1}$ and
$q_{-1}$. Note that every point on this interval corresponds to a
random 2-covering in which every edge is labeled independently from
the others, and the associated polynomial is, therefore, real rooted.
By Fact \ref{fact:moving along straight lines}, the largest root
of $\phi_{p_{Ram}}$ lies between the largest root of $\phi_{q_{1}}$
and the largest root of $\phi_{q_{-1}}$. Hence, one of the two points
$q_{1}$ or $q_{-1}$ has largest root at most the one of $p_{Ram}$,
and in particular at most $\rho$. Assume without lost of generality
that $q_{-1}$ has largest root at most $\rho$. Now repeat this process,
this time by perturbing the distribution on $e_{2}$. Let $q_{-1,1}\in\Delta_{2}\left(G\right)$
denote the point where $\sigma\left(e_{1}\right)=\left(1\,2\right)$,
$\sigma\left(e_{2}\right)=\mathrm{id}$ and all remaining edges are
distributed uniformly and independently, and let $q_{-1,-1}$ be defined
similarly, only with $\sigma\left(e_{2}\right)=\left(1\,2\right)$.
Since $q_{-1}$ lies on the straight line between $q_{-1,1}$ and
$q_{-1,-1}$, the largest root of one of these two latter points is
at most that of $q_{-1}$. If we go on and gradually choose a deterministic
value for every $e\in E^{+}\left(G\right)$ while not increasing the
largest root, we end up with a vertex of $\Delta_{2}\left(G\right)$
representing a one-side Ramanujan 2-covering of $G$. This is illustrated
in Figure \ref{fig:simplex}.

For larger values of $r$, the definition of the real-rooted region
is more subtle. Simple independence of the edges does not suffice\footnote{Consider, for instance, the cycle of length two $\vcenter{\xymatrix@1@R=2pt@C=25pt{ 
\bullet  \ar@{-}@/^.2pc/[r] \ar@{-}@/_0.2pc/[r]   & \bullet}}$
. Define a distribution $P$ on its $3$-coverings by labeling one
edge deterministically with the identity permutation and the other
edge with either the identity or a $3$-cycle $\left(1\,2\,3\right)$,
each with probability $\frac{1}{2}$. The average characteristic polynomial
of $A_{\sigma,\mathrm{std}}$ is then $\frac{\left(x^{2}-4\right)^{2}+\left(x^{2}-1\right)^{2}}{2}$
which is \emph{not} real rooted.}. Instead, we define the following real-rooted region in $\Delta_{r}\left(G\right)$.
This follows the ideas in Section 3 of \cite{MSS15}, and generalizes
the real-rooted region in $\Delta_{2}\left(G\right)$ from \cite{MSS13}.

\begin{prop*}  Let $p\in\Delta_{r}\left(G\right)$ be a probability
distribution of $r$-coverings of $G$ satisfying that for every $e\in E^{+}\left(G\right)$,
the random labeling of $e$
\begin{enumerate}
\item is independent of the labelings on other edges, and 
\item is equal to a product of independent random variables $X_{e,1},X_{e,2},\ldots,X_{e,\ell\left(e\right)}$,
where each $X_{e,i}$ has (at most) two values in its support: the
identity permutation and some transposition.
\end{enumerate}
Then $\phi_{p}$ is real rooted.\end{prop*}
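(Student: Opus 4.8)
The plan is to deduce the real-rootedness of $\phi_{p}$ from the theory of real stable polynomials, adapting the argument of \cite[\S3]{MSS15}. \emph{Reduction to the full covering.} For $H\in\mathcal{C}_{r,G}$ with labeling $\sigma$, the spectrum of $A_{H}$ is the (fixed) spectrum of $A_{G}$ together with the new spectrum, so $\det(xI-A_{H})=\det(xI-A_{G})\cdot\phi_{H}(x)$, and averaging over $p$ gives $\mathbb{E}_{p}[\det(xI-A_{H})]=\det(xI-A_{G})\cdot\phi_{p}(x)$. Since $\det(xI-A_{G})$ is real rooted and divides $\det(xI-A_{H})$ for every $H$, it divides the average; so it suffices to show that $\mathbb{E}_{p}[\det(xI-A_{H})]$ is real rooted, because its roots then contain the $n$ roots of $\det(xI-A_{G})$ with multiplicity, leaving the $(r-1)n$ roots of $\phi_{p}$ real. (One could instead argue directly with $A_{\sigma,\mathrm{std}}$, whose off-diagonal blocks have the same rank-one structure used below; I use the full covering because each realization $A_{H}$ is then literally the adjacency matrix of a graph, hence manifestly symmetric.)

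\emph{The multivariate polynomial.} For $e\in E^{+}(G)$ and $1\le i\le\ell(e)$, let $\tau_{e,i}=(a_{e,i}\ b_{e,i})$ be the transposition in the support of $X_{e,i}$ and let $v_{e,i}\in\mathbb{R}^{r}$ be the difference of the two corresponding standard basis vectors, so the permutation matrix of $\tau_{e,i}$ is the rank-one update $I_{r}-v_{e,i}v_{e,i}^{\mathsf{T}}$ of the identity by the positive semidefinite matrix $v_{e,i}v_{e,i}^{\mathsf{T}}$. Introduce a variable $z_{e,i}$ for each switch, set $M_{e,i}(z_{e,i})=I_{r}-z_{e,i}\,v_{e,i}v_{e,i}^{\mathsf{T}}$ and $P_{e}(z)=M_{e,1}(z_{e,1})\cdots M_{e,\ell(e)}(z_{e,\ell(e)})$, and let $\mathcal{A}(z)$ be obtained from $A_{G}$ by replacing, for each $e\in E^{+}(G)$, the block in position $(t(e),h(e))$ by $P_{e}(z)$ and the block in position $(h(e),t(e))$ by $P_{e}(z)^{\mathsf{T}}$ (summing blocks over parallel edges). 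Then $\mathcal{A}(\beta)=A_{H}$ whenever each $z_{e,i}$ is set to $\beta_{e,i}\in\{0,1\}$ with $\sigma_{e}=\prod_{i}\tau_{e,i}^{\beta_{e,i}}$. By independence of the $X_{e,i}$, the polynomial $\mathbb{E}_{p}[\det(xI-A_{H})]$ is the expectation of $\det(xI-\mathcal{A}(\beta))$ over independent Bernoulli $\beta_{e,i}$, and this is the multiaffine extension of $z\mapsto\det(xI-\mathcal{A}(z))$ evaluated at $z_{e,i}=q_{e,i}:=\Pr[X_{e,i}=\tau_{e,i}]$. Hence it is enough to prove that this multiaffine extension is real stable in $(x,\{z_{e,i}\})$: specializing the $z_{e,i}$ to the real numbers $q_{e,i}$ then leaves a real stable --- hence, being monic of degree $rn$, real rooted --- polynomial in $x$.

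\emph{Real stability, and the main obstacle.} If $\det(xI-\mathcal{A}(z))$ were itself multiaffine in the $z_{e,i}$, the real stability would be immediate from the classical fact that $\det\bigl(xI+\sum_{k}z_{k}u_{k}u_{k}^{\mathsf{T}}\bigr)$ is real stable (and multiaffine in the $z_{k}$). It is not: on each edge the variable $z_{e,i}$ enters $\mathcal{A}(z)$ through a rank-one term that is symmetrized into rank two and, through the non-commutative product $P_{e}$, couples to the other switches $z_{e,j}$, so $\det(xI-\mathcal{A}(z))$ is quadratic in each $z_{e,i}$. This coupling is exactly why mere independence of the edges does not suffice, and why the hypothesis insists that each edge-label factor into transposition-or-identity switches --- precisely what excludes the failure in the footnote. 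Following \cite[\S3]{MSS15}, I would linearize the product on each edge, passing to an enlarged block matrix whose determinant equals $\det(xI-A_{H})$ up to a nonzero constant and in which the switch variables are disentangled, prove that the resulting multivariate polynomial is real stable, and recover $\mathbb{E}_{p}[\det(xI-A_{H})]$ by specializing its switch variables to the $q_{e,i}$. The technical core --- and the step I expect to be the real work --- is establishing real stability of these linearized determinants, whose block structure is no longer Hermitian; this is the argument imported from \cite[\S3]{MSS15}. With that in hand, the reduction of the first paragraph completes the proof.
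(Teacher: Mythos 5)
Your bookkeeping is fine: the reduction $\mathbb{E}_{p}\left[\det\left(xI-A_{H}\right)\right]=\det\left(xI-A_{G}\right)\cdot\phi_{p}\left(x\right)$, the encoding of each edge label by Boolean switch variables $\beta_{e,i}$ via $I-\beta_{e,i}v_{e,i}v_{e,i}^{\mathsf{T}}$, and the identity ``expectation over independent Bernoullis $=$ multiaffine extension evaluated at the probabilities $q_{e,i}$'' are all correct. But the proof has no content at its decisive step: everything is deferred to ``establishing real stability of these linearized determinants,'' which you propose to import from \cite[\S 3]{MSS15}. That import is not available. Section 3 of \cite{MSS15} (Theorem 3.3 there, which is exactly what this paper adapts as Theorem \ref{thm:MSS15}) is an \emph{interlacing-families} argument: it conditions on one random swap at a time, writes the discrepancy of the two conjugated Hermitian matrices as a rank-$2$, trace-$0$ difference $uu^{*}-vv^{*}$, and deduces a common interlacing from a rank-one linearity lemma; it contains no multivariate real-stability statement, and in particular nothing about non-Hermitian ``linearized'' block determinants. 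So the technical core of your argument is neither proved nor citable. Worse, the intermediate claim you would need --- that the multiaffine extension of $\beta\mapsto\det\left(xI-\mathcal{A}\left(\beta\right)\right)$ is real stable jointly in $x$ and all the $z_{e,i}$ --- is strictly stronger than the proposition (it would yield real-rootedness for ``probabilities'' outside $\left[0,1\right]$ as well), and you give no argument for it; the classical determinantal stability facts you invoke apply to affine Hermitian pencils $xI+\sum_{k}z_{k}u_{k}u_{k}^{\mathsf{T}}$ and are defeated by exactly the feature you yourself identify (the non-commuting product of switches inside a single edge block, which makes the dependence quadratic and the off-diagonal blocks non-Hermitian for non-Boolean $z$). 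The hypothesis that each factor is a transposition-or-identity switch --- the very thing that rules out the $3$-cycle counterexample in the footnote --- never actually enters your outline except through this unproven step.

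For contrast, the paper's proof avoids multivariate stability altogether. It writes $A_{\gamma,\pi}=\sum_{e\in E^{+}\left(G\right)}A_{\gamma,\pi}^{\pm}\left(e\right)$ with each summand Hermitian, observes that under your hypotheses each summand of the random covering equals the deterministic one conjugated by a product of independent rank-$1$ block-diagonal unitaries $W_{e}\left(X_{1}\right),\ldots,W_{e}\left(X_{r}\right)$ (Definition \ref{def:rank-1-variable}), and then proves Theorem \ref{thm:MSS15} by induction on the number of these variables: conditioning on the last one, using Fact \ref{fact:common interlacing}, Lemma \ref{lem:rank-1 linearity}, and the decomposition $DA_{1}D^{*}-A_{1}=uu^{*}-vv^{*}$ to get a common interlacing. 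To salvage your stability route you would have to construct the linearized determinant explicitly and prove from scratch that the relevant expectation operators preserve stability; as written, your proposal establishes only the easy reductions and leaves the real-rootedness itself unproved.
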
 

The statement of Proposition \ref{prop:avg-char-poly-of-random-coverings-is-real-rooted}
is slightly more general and applies to any pseudo-reflections and
not only transpositions. To illustrate this proposition, consider
the case $r=3$. Following \cite[Lemma 3.5]{MSS15}, define for every
$e\in E^{+}\left(G\right)$ three independent random variables $X_{e}$,
$Y_{e}$ and $Z_{e}$ taking values in $S_{3}$:

\begin{equation}
X_{e}=\begin{cases}
\mathrm{id} & \mathrm{with\,prob}\,\frac{1}{2}\\
\left(1\,2\right) & \mathrm{with\,prob}\,\frac{1}{2}
\end{cases}\,\,\,\,Y_{e}=\begin{cases}
\mathrm{id} & \mathrm{with\,prob}\,\frac{1}{3}\\
\left(1\,3\right) & \mathrm{with\,prob}\,\frac{2}{3}
\end{cases}\,\,\,\,Z_{e}=\begin{cases}
\mathrm{id} & \mathrm{with\,prob}\,\frac{1}{2}\\
\left(1\,2\right) & \mathrm{with\,prob}\,\frac{1}{2}
\end{cases}.\label{eq:XYZ}
\end{equation}
The random permutation $X_{e}\cdot Y_{e}\cdot Z_{e}$ has uniform
distribution in $S_{3}$. This shows that the center point $p\in\Delta_{3}\left(G\right)$
satisfies the assumptions in the Proposition and therefore $\phi_{p}$
is real rooted. Together with Theorem \ref{thm:P1} that implies that
all real roots of $\phi_{p}$ are in the Ramanujan interval $\left[-\rho,\rho\right]$,
we obtain that the center point is Ramanujan.

In Remark \ref{remark:middle point for Sym group} we explain how
the explicit construction of random variables in \eqref{eq:XYZ} can
be generalized to every $r$, showing that the center point of $\Delta_{r}\left(G\right)$
satisfies the assumptions in Proposition \ref{prop:avg-char-poly-of-random-coverings-is-real-rooted}
and is, therefore, real rooted. However, the crucial feature of the
family of $r$-coverings of $G$ is that $\mathrm{std}\left(S_{r}\right)\le\mathrm{GL}_{r-1}\left(\mathbb{C}\right)$
is a complex reflection group, i.e.~that $\left(S_{r},\mathrm{std}\right)$
satisfies $\left(\P2\right)$. In Section \ref{subsec:Proof-of-Theorem P2}
we give an alternative, non-constructive argument which shows that
for every pair $\left(\Gamma,\pi\right)$ satisfying $\left(\P2\right)$,
the center point of the corresponding simplex satisfies the assumptions
of Proposition \ref{prop:avg-char-poly-of-random-coverings-is-real-rooted}.

\medskip{}

Finally, to find a one-sided Ramanujan $r$-covering of $G$, we imitate
the process illustrated in Figure \ref{fig:simplex}, only at each
stage we perturb and then fix the value of one of the independent
random variables $X_{e,i}$. For example, in the case $r=3$ and the
variables constructed in \eqref{eq:XYZ}, this translates to the $3\left|E^{+}\left(G\right)\right|$-step
process illustrated in Figure \ref{fig:XYZ and interlacing tree}.
At each step, we determine the value of one of the $3\left|E^{+}\left(G\right)\right|$
variables so that the maximal root of the associated polynomial never
increases.

\begin{figure}
\includegraphics[viewport=0bp 400bp 300bp 650bp,scale=0.7]{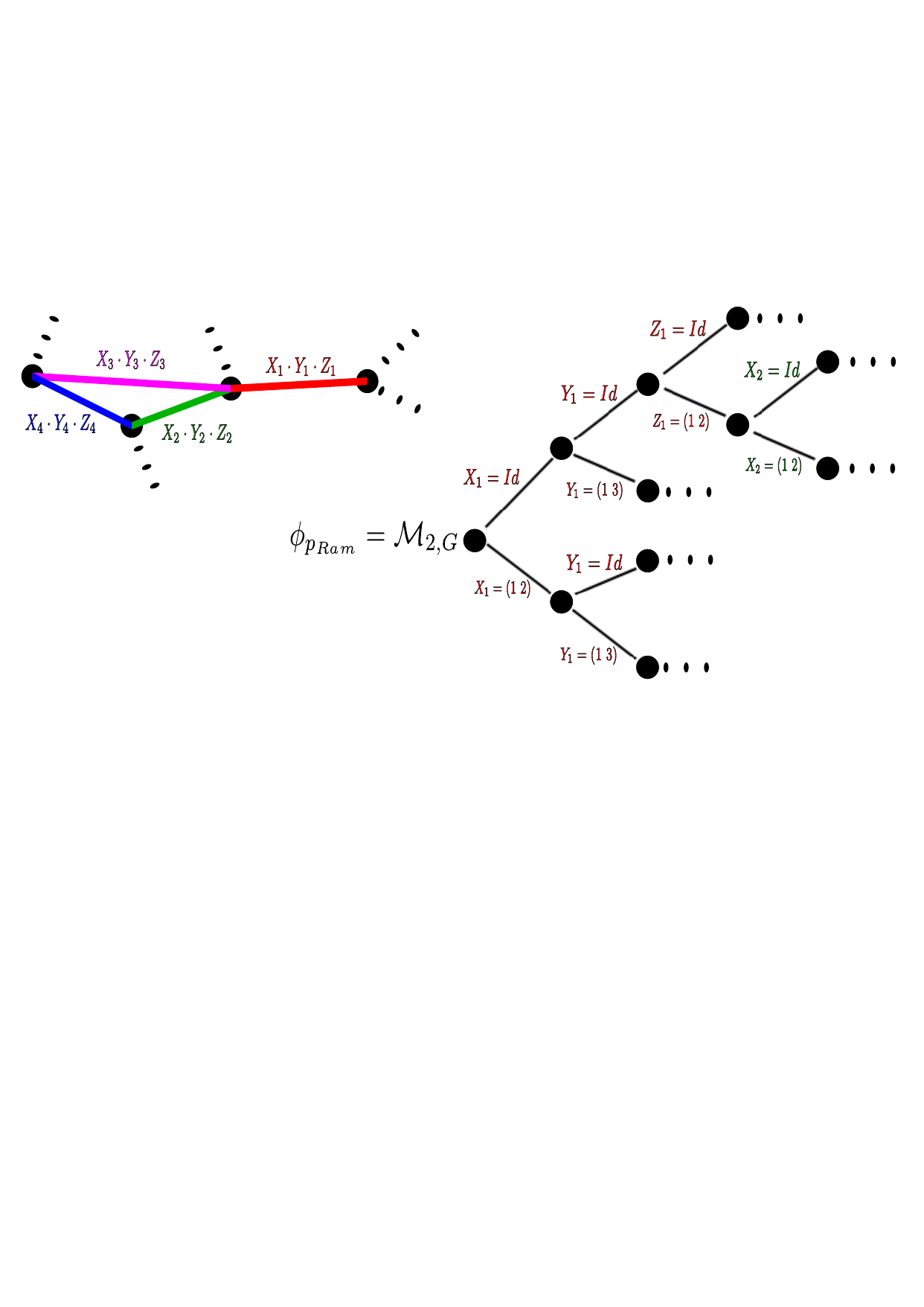}

\caption{On the left is a piece of the graph $G$ with three random variables
associated to every edge. On the right is a piece of the corresponding
binary tree of polynomials.}
\label{fig:XYZ and interlacing tree}
\end{figure}

\medskip{}

The rest of the paper is organized as follows. In Section \ref{sec:Background,-Preliminary-Claims}
we give more background details, prove some preliminary results and
reduce all results to proving Theorems \ref{thm:P1} and \ref{thm:P2}.
Section \ref{sec:P1} is dedicated to property $\left({\cal P}1\right)$
and the proof of Theorem \ref{thm:P1}, while in Section \ref{sec:P2}
we study Property $\left({\cal P}2\right)$ and prove Theorem \ref{thm:P2}.
In Section \ref{sec:applications}, we study groups satisfying the
two properties and present further combinatorial applications of Theorem
\ref{thm:gamma-pi-one-sided-rmnjn-covering}. We end in Section \ref{sec:Open-Questions}
with a list of open questions arising from the discussion in this
paper.

\section{Background and Preliminary Claims\label{sec:Background,-Preliminary-Claims}}

In this section we give more background material, prove some preliminary
claims, and reduce all the results from Section \ref{sec:Introduction}
to the proofs of Theorems \ref{thm:P1} and \ref{thm:P2}.

\subsection{Expander and Ramanujan Graphs\label{subsec:Expander-and-Ramanujan}}

As in Section \ref{sec:Introduction}, let $G$ be a finite connected
graph on $n$ vertices and $A_{G}$ its adjacency matrix. Recall that
$\pf\left(G\right)$ is the Perron-Frobenius eigenvalue of $A_{G}$,
that $\lambda_{n}\le\ldots\le\lambda_{2}\le\lambda_{1}=\pf\left(G\right)$
are its entire spectrum, and that $\lambda\left(G\right)=max\left(\lambda_{2},-\lambda_{n}\right)$.
The graph $G$ is considered to be well-expanding if it is ``highly''
connected. This can be measured by different combinatorial properties
of $G$, most commonly by its Cheeger constant, by the rate of convergence
of a random walk on $G$, or by how well the number of edges between
any two sets of vertices approximates the corresponding number in
a random graph (via the so-called Expander Mixing Lemma)\footnote{In this sense, Ramanujan graphs resemble random graphs. The converse
is also true in certain regimes of random graphs: see \cite{Pud15}
and the references therein.}. All these properties can be measured, at least approximately, by
the spectrum of $G$, and especially by $\lambda\left(G\right)$ and
the spectral gap $\pf\left(G\right)-\lambda\left(G\right)$: the smaller
$\lambda\left(G\right)$ and the bigger the spectral gap is, the better
expanding $G$ is\footnote{More precisely, the Cheeger inequality relates the Cheeger constant
of a graph with the value of $\lambda_{2}\left(G\right)$.}. (See \cite{HLW06} and \cite[Appendix B]{Pud15} and the references
therein.)

Yet, $\lambda\left(G\right)$ cannot be arbitrarily small. Let $T$
be the universal covering tree of $G$. We think of all the finite
graphs covered by $T$ as one family. For example, for any $k\ge2$,
all finite $k$-regular graphs constitute a single such family of
graphs: they are all covered by the $k$-regular tree. Let $\rho\left(T\right)$
be the spectral radius of $T$. This number is the spectral radius
of the adjacency operator $A_{T}$ acting on $\ell^{2}\left(V\left(T\right)\right)$
by 
\[
\left(A_{T}f\right)\left(v\right)=\sum_{u\sim v}f\left(u\right),
\]
and for the $k$-regular tree, this number is $2\sqrt{k-1}$. The
spectral radius of $T$ plays an important role in the theory of expansion
of the corresponding family of graphs:
\begin{thm}[{This version appears in \cite[Theorem~6]{cioabua2006eigenvalues},
a slightly weaker version appeared already in \cite[Theorem~2.11]{Gre95}}]
\label{thm:Greenberg} Let $T$ be a tree with finite quotients and
$\rho$ its spectral radius. For every $\varepsilon>0$, there exists
$c=c\left(T,\varepsilon\right)$, $0<c<1$, such that if $G$ is a
finite graph with $n$ vertices which is covered by $T$, then at
least $cn$ of its eigenvalues satisfy $\lambda_{i}\ge\rho-\varepsilon$.\\
In particular, $\lambda\left(G\right)\ge\rho-o_{n}\left(1\right)$
(with the $o_{n}\left(1\right)$ term depending only on $T$).
\end{thm}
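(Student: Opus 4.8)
The plan is to run the closed‑walk (trace) method, comparing walk counts in $G$ with walk counts in the fixed tree $T$, and — this is the one real idea needed to get the \emph{one‑sided} conclusion — to do so with a \emph{shifted} test polynomial $(x+a)^{2k}$ rather than with $x^{2k}$. First I would record the data that depends only on $T$. Fix a finite quotient $G_{0}$ of $T$; the deck group $\pi_{1}(G_{0})$ acts on $V(T)$ with $|V(G_{0})|<\infty$ orbits, so $T$ has finitely many vertex types, with representatives $v_{1},\dots,v_{t}$. Since a covering map is a local isomorphism, every finite $G$ covered by $T$ has maximal degree $\Delta:=\Delta(T)$, hence $\pf(G)\le\Delta$ and $\mathrm{Spec}(A_{G})\subseteq[-\Delta,\Delta]$. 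Let $\mu_{v}$ be the spectral measure of the bounded self‑adjoint operator $A_{T}$ on $\ell^{2}(V(T))$ at $\delta_{v}$; it is a probability measure on $[-\rho,\rho]$ with $(A_{T}^{\,j})_{vv}=\int x^{j}\,d\mu_{v}$, and it depends only on the orbit of $v$. The single external input I would cite is Kesten's characterization of the spectral radius, $\limsup_{k}\bigl((A_{T}^{2k})_{vv}\bigr)^{1/2k}=\rho$ for every $v$, i.e.\ $\rho\in\operatorname{supp}(\mu_{v})$; combined with finiteness of the number of types it gives a constant $\beta_{0}:=\min_{1\le i\le t}\mu_{v_{i}}\!\bigl([\rho-\tfrac{\varepsilon}{2},\rho]\bigr)>0$ depending only on $T$ and $\varepsilon$.

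Next comes the comparison. By unique path lifting, a closed walk of length $j$ in $T$ based at a lift $\tilde u$ of $u\in V(G)$ projects injectively to a closed walk of length $j$ in $G$ based at $u$, so $(A_{G}^{\,j})_{uu}\ge(A_{T}^{\,j})_{\tilde u\tilde u}$ for all $j\ge0$. Setting $a:=\Delta$, the polynomial $P(x)=(x+a)^{2k}=\sum_{j}\binom{2k}{j}a^{2k-j}x^{j}$ has nonnegative coefficients, so summing the walk inequality with those weights gives
\[
\mathrm{tr}\bigl((A_{G}+aI)^{2k}\bigr)=\sum_{u\in V(G)}\bigl((A_{G}+aI)^{2k}\bigr)_{uu}\ \ge\ \sum_{u\in V(G)}\int (x+a)^{2k}\,d\mu_{\tilde u}(x)\ \ge\ n\,\beta_{0}\,(a+\rho-\tfrac{\varepsilon}{2})^{2k},
\]
the last step because $(x+a)^{2k}\ge(a+\rho-\tfrac{\varepsilon}{2})^{2k}$ on $[\rho-\tfrac{\varepsilon}{2},\rho]$. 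On the other hand, writing $N=\#\{i:\lambda_{i}\ge\rho-\varepsilon\}$ and using $0\le\lambda_{i}+a\le\Delta+a$ for all $i$ and $\lambda_{i}+a<a+\rho-\varepsilon$ when $\lambda_{i}<\rho-\varepsilon$,
\[
\mathrm{tr}\bigl((A_{G}+aI)^{2k}\bigr)=\sum_{i=1}^{n}(\lambda_{i}+a)^{2k}\ \le\ N\,(a+\Delta)^{2k}+n\,(a+\rho-\varepsilon)^{2k}.
\]
Comparing the two displays, $N\ge n\cdot\bigl[\beta_{0}(a+\rho-\tfrac{\varepsilon}{2})^{2k}-(a+\rho-\varepsilon)^{2k}\bigr]\big/(a+\Delta)^{2k}$. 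Since $\tfrac{a+\rho-\varepsilon}{a+\rho-\varepsilon/2}<1$, the bracket becomes positive once $k$ is large in terms of $\beta_{0}$, hence in terms of $T$ and $\varepsilon$ only; fixing such a $k=k(T,\varepsilon)$ yields $c=c(T,\varepsilon)\in(0,1)$ with $N\ge cn$. Finally, for $n\ge 2/c$ this forces $\lambda_{2}\ge\rho-\varepsilon$, hence $\lambda(G)\ge\rho-\varepsilon$; as $\varepsilon$ was arbitrary, $\lambda(G)\ge\rho-o_{n}(1)$ with the rate depending only on $T$.

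The main obstacle — and the reason $x^{2k}$ alone is not enough — is that $x^{2k}$ is as large near $-\rho$ as near $+\rho$, so it only controls $\#\{i:|\lambda_{i}|\ge\rho-\varepsilon\}$; the shift by $a\ge\Delta$ preserves nonnegativity of the coefficients (so the tree comparison still applies) while making $P$ strictly increasing on $[-\Delta,\Delta]$, hence far larger at $\rho$ than anywhere in $[-\Delta,\rho-\varepsilon]$, which is exactly what isolates the upper end of the spectrum. The other delicate point is the Kesten input $\rho\in\operatorname{supp}(\mu_{v})$, uniformly over the finitely many vertex types: this is standard but, if a self‑contained proof is wanted, it follows from Cauchy--Schwarz on off‑diagonal entries $(A_{T}^{2k})_{uw}\le\bigl((A_{T}^{2k})_{uu}(A_{T}^{2k})_{ww}\bigr)^{1/2}$, Jensen's inequality applied to the spectral measure of a near‑maximizing finitely supported unit vector, and the trivial comparison $(A_{T}^{2(k+d)})_{vv}\ge(A_{T}^{2k})_{ww}$ for $d=\mathrm{dist}_{T}(v,w)$ to pass between vertex types.
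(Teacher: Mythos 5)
The paper itself gives no proof of this theorem: it is quoted from Cioab\u{a} \cite{cioabua2006eigenvalues} and Greenberg \cite{Gre95}, so there is no internal argument to compare with. Your closed-walk/trace comparison against the universal covering tree is precisely the kind of argument used in those sources, and on the whole it is correct. The covering-theoretic inputs are fine: every vertex of $T$ lies over one of the finitely many vertices of a fixed finite quotient $G_{0}$, and any two vertices of $T$ in the same fiber are related by a deck transformation of the (universal) covering $T\to G_{0}$, so the diagonal spectral measures $\mu_{v}$ take only finitely many values; unique path lifting gives $(A_{G}^{\,j})_{uu}\ge (A_{T}^{\,j})_{\tilde u\tilde u}$ for all $j$, and since $(x+a)^{2k}$ has nonnegative coefficients this inequality survives the shift. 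The shift by $a=\Delta$ is exactly the right device to get the \emph{one-sided} count, and the final bookkeeping (choose $k=k(T,\varepsilon)$ to make the bracket positive, set $c$, and deduce $\lambda_{2}\ge\rho-\varepsilon$ once $n\ge 2/c$, hence $\lambda(G)\ge\rho-o_{n}(1)$) is correct.

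The one step you should patch is the ``i.e.''\ in your Kesten input. From $\lim_{k}\bigl((A_{T}^{2k})_{vv}\bigr)^{1/2k}=\rho$ you can only conclude $\max\{|x|:x\in\mathrm{supp}(\mu_{v})\}=\rho$, i.e.\ that $\mu_{v}$ has mass near $+\rho$ \emph{or} near $-\rho$; your argument needs mass near $+\rho$, since isolating the top end of the spectrum is the whole point of the shifted polynomial, and your self-contained sketch at the end (Cauchy--Schwarz, Jensen, the distance trick) has the same even-moment limitation. The missing line is supplied by the fact that $T$ is a tree, hence bipartite: all odd closed-walk counts at $v$ vanish, so all odd moments of the compactly supported measure $\mu_{v}$ vanish, so $\mu_{v}$ is invariant under $x\mapsto-x$, and therefore $\rho\in\mathrm{supp}(\mu_{v})$ and $\beta_{0}=\min_{i}\mu_{v_{i}}\bigl([\rho-\tfrac{\varepsilon}{2},\rho]\bigr)>0$ as you claim. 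With that one remark inserted, your proof is complete.
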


The last statement of the theorem, restricted to regular graphs, is
due to Alon-Boppana \cite{Nil91}. Thus, graphs $G$ satisfying $\lambda\left(G\right)\le\rho\left(G\right)$
are considered to be optimal expanders. Following the terminology
of \cite{LPS88}, they are called Ramanujan graphs. 

The seminal works \cite{LPS88,Mar88,Mor94} provide an infinite family
of $k$-regular Ramanujan graphs whenever $k-1$ is a prime power.
Lubotzky \cite[Problem 10.7.3]{lubotzky1994discrete} asked whether
for every $k\ge3$ there are infinitely many $k$-regular Ramanujan
graphs\footnote{In fact, Lubotzky's original definition of Ramanujan graphs included
also \emph{bipartite}-Ramanujan graphs. Thus, \cite{MSS13} answers
this question positively.}. We stress that this only hints at a much stronger phenomena. For
example, it is known \cite{FRIEDMAN} that as $n\to\infty$, almost
all $k$-regular graphs are nearly Ramanujan, in the following sense:
for every $\varepsilon>0$, the non-trivial spectrum of a random $k$-regular
graph falls in $\left[-\rho-\varepsilon,\rho+\varepsilon\right]$
with probability tending to $1$. Moreover, it is conjectured that
among all $k$-regular graphs on $n$ vertices the proportion of Ramanujan
graphs tends to a constant in $\left(0,1\right)$ as $n\to\infty$
(e.g.~\cite{MNS08}).

Recall that we consider families of finite graphs defined by a common
universal covering tree. In the regular case, every family has at
least one Ramanujan graph (e.g.~the complete graph on $k+1$ vertices).
Other families may contain no Ramanujan graphs at all. For example,
the family of $\left(k,\ell\right)$-biregular graphs, all covered
by the $\left(k,\ell\right)$-biregular tree, consists entirely of
bipartite graphs, so none of them is Ramanujan in the strict sense.
There also exist families with no Ramanujan graphs, not even bipartite-Ramanujan
ones \cite{LN98}. In these cases there are certain ``bad'' eigenvalues
outside the Ramanujan interval appearing in every finite graph in
the family. Still, it makes sense to look for optimal expanders under
these constraints, namely, for graphs in the family where all other
eigenvalues lie in the Ramanujan interval. For example, bipartite-Ramanujan
graphs are optimal expanders in many combinatorial senses within the
family of bipartite graphs (e.g. \cite[Lemma 4.2]{golubev2014spectrum}).
The strategy of constructing Ramanujan coverings fits this general
goal: find any graph in the family which is optimal (has all its values
in the Ramanujan interval except for the bad ones) and construct Ramanujan
coverings to obtain more optimal graphs in the same family. Of course,
connected coverings of a graph $G$ are covered by the same tree as
$G$.

Marcus, Spielman and Srivastava have already shown that every graph
has a one-sided Ramanujan $2$-covering \cite{MSS13}. Thus, if a
family of graphs contains at least one Ramanujan graph (bipartite
or not), then it has infinitely many bipartite-Ramanujan graphs\footnote{Given a Ramanujan graph, its ``double cover'' --- the 2-covering
with all permutations being non-identity --- is bipartite-Ramanujan.}. More recently, they have showed that for any $k\ge3$, the graph
$\vcenter{\xymatrix@1@R=2pt@C=25pt{ 
\bullet \ar@{-}@/^0.4pc/[r]_{\filleddiamond} \ar@{-}@/^.2pc/[r] \ar@{-}@/_0.41pc/[r] \ar@{-}[r]  & \bullet}}$
(two vertices with $k$ edges connecting them) has a Ramanujan $r$-covering
for every $r$ \cite{MSS15}. It follows there are $k$-regular bipartite-Ramanujan
graphs, not necessarily simple, on $2r$ vertices for every $r$.
Our proof to the more general result, Theorem \ref{thm:Every-graph-has-one-sided-d-ram-cover},
is very different. It also yields the existence of a richer family
of bipartite-Ramanujan graphs than was known before. 
\begin{cor}
\label{cor:simple-ramanujan-graphs}Consider the family of all finite
graphs which are covered by some given common universal covering tree.
If the family contains a (simple) bipartite-Ramanujan graph on $n$
vertices, then it also contains (simple, respectively) bipartite-Ramanujan
graphs on $nr$ vertices for every $r\in\mathbb{Z}_{\ge1}$. \\
In particular, \emph{there is a simple} $k$-regular, bipartite-Ramanujan
graph on $2kr$ vertices for every $r$.\\
There is also a simple, $\left(k,\ell\right)$-biregular, bipartite-Ramanujan
graph on $\left(k+\ell\right)r$ vertices for every $r$.
\end{cor}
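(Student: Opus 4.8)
The plan is to deduce the corollary directly from Corollary~\ref{cor:Every-bipartite-has-ram-cover}, together with a few elementary facts about graph coverings. I would fix a bipartite-Ramanujan graph $G$ on $n$ vertices in the given family, so that $G$ is connected and bipartite, its universal covering tree is the prescribed tree $T$, $\rho(G)=\rho(T)=:\rho$, and every eigenvalue of $G$ other than $\pm\pf(G)$ lies in $\left[-\rho,\rho\right]$. By Corollary~\ref{cor:Every-bipartite-has-ram-cover}, $G$ has a Ramanujan $r$-covering $H$, and I would argue $H$ is the graph we want. It has exactly $nr$ vertices; being a covering of a bipartite graph it is bipartite; since a covering map restricts to a bijection on the edges incident to each vertex, a loop (resp.\ a pair of parallel edges) in $H$ would project to a loop (resp.\ a pair of parallel edges) in $G$, so $H$ is simple whenever $G$ is, and in any case $H$ has the same vertex degrees as $G$, so $k$-regularity and $(k,\ell)$-biregularity are inherited. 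Finally, the $rn$ eigenvalues of $H$ are the $n$ old ones, which are precisely the eigenvalues of $G$, together with the $(r-1)n$ new ones, which lie in $\left[-\rho,\rho\right]$ because $H$ is Ramanujan; hence every eigenvalue of $H$ outside $\{\pm\pf(G)\}$ lies in $\left[-\rho,\rho\right]$.

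The point that needs care --- and the main obstacle --- is that $H$ should be connected, for it is connectedness that places $H$ in the family: a connected covering of $G$ again has universal cover $T$ (coverings compose), so $\rho(H)=\rho$, and its Perron--Frobenius eigenvalue equals $\pf(G)$, since the positive eigenfunction of $G$ pulls back to a positive eigenfunction of $H$, unique by Perron--Frobenius when $H$ is connected. Granting this, the spectral bookkeeping above says exactly that $H$ is bipartite-Ramanujan. Connectedness is automatic in the relevant situations: if $H$ had $m\ge2$ components, each would be a connected covering of $G$ and would carry a copy of $\pf(G)$ in its spectrum, so $\pf(G)$ would occur with multiplicity $\ge m\ge 2$ in $\mathrm{Spec}(H)$ and thus among its new eigenvalues, forcing $\pf(G)\le\rho$. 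As $\pf(G)\ge\rho(T)=\rho$ always holds, this can only happen in the degenerate equality case $\pf(G)=\rho(G)$; apart from the vacuous case in which $T$ has no branching (the family then being a single graph), this forces $G$ to be an even cycle $C_L$, which is handled directly by the covering $C_{rL}$. So in all cases of interest $H$ is connected and bipartite-Ramanujan.

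For the concrete consequences I would take complete bipartite graphs as the base. The graph $K_{k,k}$ is simple, $k$-regular, bipartite, on $2k$ vertices, with spectrum $\{k,-k\}$ together with $0$ of multiplicity $2k-2$; since $0$ lies in the Ramanujan interval $\left[-2\sqrt{k-1},2\sqrt{k-1}\right]$ it is bipartite-Ramanujan, and $\pf=k>2\sqrt{k-1}=\rho$ for $k\ge3$, so the argument above gives a simple $k$-regular bipartite-Ramanujan graph on $2kr$ vertices for every $r$ (the case $k=2$ being trivial, since every even cycle works). Likewise the complete bipartite graph with parts of sizes $\ell$ and $k$ is simple, $(k,\ell)$-biregular, bipartite, on $k+\ell$ vertices, with spectrum $\{\sqrt{k\ell},-\sqrt{k\ell}\}$ together with $0$ of multiplicity $k+\ell-2$; it is bipartite-Ramanujan because $0\in\left[-\rho,\rho\right]$ for $\rho=\sqrt{k-1}+\sqrt{\ell-1}$, and $\sqrt{k\ell}>\rho$ unless $k=\ell=2$ (as $(k-1)(\ell-1)+1>2\sqrt{(k-1)(\ell-1)}$ whenever $(k-1)(\ell-1)\ne 1$), so the argument gives a simple $(k,\ell)$-biregular bipartite-Ramanujan graph on $(k+\ell)r$ vertices. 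The only computations involved are the nonzero singular value of an all-ones block --- giving the spectrum of a complete bipartite graph --- and the observation that coverings preserve degrees and simplicity; the entire substance is carried by Corollary~\ref{cor:Every-bipartite-has-ram-cover}.
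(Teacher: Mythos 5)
Your overall route is the same as the paper's: the paper proves this corollary in one stroke by applying Corollary \ref{cor:Every-bipartite-has-ram-cover} to the given bipartite-Ramanujan graph (and, for the two explicit statements, to the complete bipartite graphs $K_{k,k}$ and $K_{k,\ell}$), using exactly the facts you list -- a covering preserves bipartiteness, degrees and simplicity, the old eigenvalues are those of the base, and the new ones lie in $\left[-\rho,\rho\right]$. Your explicit verification that $K_{k,k}$ and $K_{k,\ell}$ are bipartite-Ramanujan, and your observation that when $\pf\left(G\right)>\rho$ a Ramanujan covering is forced to be connected (so it genuinely lies in the family and has the same Perron--Frobenius eigenvalue), are correct and in fact more careful than the printed proof, which does not discuss connectedness at all. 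Since $\pf>\rho$ holds strictly for $K_{k,k}$ with $k\ge3$ and for $K_{k,\ell}$ with $\left(k,\ell\right)\ne\left(2,2\right)$, the two concrete statements are fully covered by your argument.

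The one incorrect step is your resolution of the degenerate case $\pf\left(G\right)=\rho\left(G\right)$: it is not true that (away from branchless trees) equality forces $G$ to be an even cycle. Equality holds for every unicyclic graph, not only for cycles. For instance, let $G$ be an even cycle with one pendant vertex attached: its universal cover $T$ is a bi-infinite path with pendant edges attached periodically, which branches; since $T$ has linear growth, truncating the pullback of the Perron eigenfunction of $G$ to large balls produces almost-eigenfunctions with eigenvalue $\pf\left(G\right)$, so $\rho\left(T\right)\ge\pf\left(G\right)$ and hence $\rho\left(T\right)=\pf\left(G\right)$. This $G$ is bipartite and (trivially) bipartite-Ramanujan, so it is a legitimate input to the general statement that your dichotomy misses. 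The gap is easy to close: whenever $\pf\left(G\right)=\rho$, \emph{any} connected $r$-covering $H$ of $G$ works, since $\pf\left(H\right)=\pf\left(G\right)=\rho$ and therefore all eigenvalues of $H$ lie in $\left[-\rho,\rho\right]$; such a connected covering exists as soon as $G$ is not a tree, because $\pi_{1}\left(G\right)$ then surjects onto $\mathbb{Z}/r\mathbb{Z}$. (If $G$ is a finite tree the family consists of $G$ alone and the statement is vacuous for $r\ge2$; this degenerate reading is excluded implicitly by the paper as well.) With that patch your proof is complete and matches the paper's argument.
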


The last statement follows by constructing Ramanujan $r$-coverings
of the full $k$-regular bipartite graph on $2k$ vertices, or of
the full $\left(k,\ell\right)$-biregular bipartite graph on $k+\ell$
vertices, both of which are bipartite-Ramanujan.

As of now, we cannot extend all the results in this paper to graphs
with loops (and see Question \ref{ques:Dealing-with-loops}). However,
we can extend Theorem \ref{thm:Every-graph-has-one-sided-d-ram-cover}
to regular graphs with loops. We now give the short proof of this
extension, assuming Theorem \ref{thm:Every-graph-has-one-sided-d-ram-cover}:
\begin{prop}
\label{prop:regular graphs with loops}Let $G$ be a regular finite
graph, possibly with loops. Then $G$ has a one-sided Ramanujan $r$-covering
for every $r$.
\end{prop}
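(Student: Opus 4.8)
The plan is to deduce the statement from the loopless case of Theorem~\ref{thm:Every-graph-has-one-sided-d-ram-cover}, applied not to $G$ but to its \emph{subdivision}. First I may assume $G$ is connected, since in general $G$ is a disjoint union of connected $k$-regular graphs, an $r$-covering of $G$ is the disjoint union of $r$-coverings of the components, and $\rho(G)$ is the maximum of the $\rho$ of the components; and I may assume $G$ has a loop, since otherwise Theorem~\ref{thm:Every-graph-has-one-sided-d-ram-cover} applies verbatim. A loop forces $k\ge 2$. Let $S=\mathrm{Sub}(G)$ be the graph obtained by replacing each edge $e$ of $G$ by a path of length two through a new ``edge-vertex'' $w_{e}$ (so a loop at $v$ becomes a pair of parallel edges between $v$ and $w_{e}$). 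Then $S$ is connected and, crucially, \emph{loopless}: every edge of $S$ joins a vertex of $G$ to some $w_{e}$. By Theorem~\ref{thm:Every-graph-has-one-sided-d-ram-cover}, for each $r$ there is a one-sided Ramanujan $r$-covering $\widetilde H$ of $S$.

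Next I descend $\widetilde H$ to an $r$-covering of $G$. Each $w_{e}$ has degree $2$ in $S$, hence each of its $r$ preimages has degree $2$ in $\widetilde H$; suppressing all these degree-$2$ vertices (replacing the two edges at such a vertex by a single edge between its neighbours, which is a loop when the neighbours coincide) yields a graph $H$ on the preimages of $V(G)$. One checks that $H$ is an $r$-covering of $G$, and that $\widetilde H$ is isomorphic to $\mathrm{Sub}(H)$: it is a topological subdivision of $H$ in which each preimage of a $w_{e}$ subdivides exactly one edge of $H$.

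It remains to transfer the spectral bound through subdivision. For any $k$-regular graph $X$ (possibly with loops) let $N_{X}$ be the $V(X)\times E(X)$ incidence matrix in which a loop at $v$ is recorded with multiplicity $2$ in the $v$-entry; a direct computation using $k$-regularity gives $N_{X}N_{X}^{\top}=A_{X}+kI$. In the block decomposition $V(\mathrm{Sub}(X))=V(X)\sqcup E(X)$ the adjacency matrix of $\mathrm{Sub}(X)$ is $\left(\begin{smallmatrix}0 & N_{X}\\ N_{X}^{\top}&0\end{smallmatrix}\right)$, so $\mathrm{Spec}(\mathrm{Sub}(X))=\{\pm\sqrt{k+\mu}\colon\mu\in\mathrm{Spec}(A_{X})\}$ together with $|E(X)|-|V(X)|$ zeros. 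Applying this to $X=G$ and to $X=H$, and using $\mathrm{Spec}(A_{G})\subseteq\mathrm{Spec}(A_{H})$ as multisets to cancel the old part, the new spectrum of $\widetilde H=\mathrm{Sub}(H)$ over $S=\mathrm{Sub}(G)$ is $\{\pm\sqrt{k+\mu}\colon\mu\text{ a new eigenvalue of }H\text{ over }G\}$ together with zeros. Finally, the universal cover of $S$ is the subdivision of the universal cover of $G$, namely the subdivision of the $k$-regular tree; applying the same incidence identity to that tree gives $\rho(S)^{2}=k+\rho(G)$ (equivalently $\rho(S)=\sqrt{k-1}+1$). Since $t\mapsto\sqrt{k+t}$ is increasing on $[-k,\infty)$ and every new eigenvalue of $H$ over $G$ lies in $[-k,k]$, the one-sided Ramanujan condition for $\widetilde H$ over $S$ (all new eigenvalues $\le\rho(S)$) is equivalent to $\sqrt{k+\mu}\le\sqrt{k+\rho(G)}$ for every new eigenvalue $\mu$ of $H$ over $G$, i.e.\ to $\mu\le\rho(G)$. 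Thus $H$ is a one-sided Ramanujan $r$-covering of $G$.

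The step that requires the most care is this last paragraph: establishing the incidence identity $N_{X}N_{X}^{\top}=A_{X}+kI$ with the correct loop multiplicities, tracking the multiplicities of the zero eigenvalue (in particular the degenerate case $\mu=-k$, which occurs precisely when $H$ has a bipartite component) when passing between $X$ and $\mathrm{Sub}(X)$, and verifying that the ``old'' spectra cancel exactly so that the correspondence between new eigenvalues of $\widetilde H/S$ and of $H/G$ is a bijection. The remaining ingredients — that subdivision commutes with passing to covers and with passing to the universal cover, and that suppressing degree-$2$ vertices of a covering of $S$ produces a covering of $G$ — are routine.
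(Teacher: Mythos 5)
Your proposal is correct and follows essentially the same route as the paper's own proof: subdivide the regular graph into a loopless bipartite graph, apply the loopless result (the paper invokes Corollary \ref{cor:Every-bipartite-has-ram-cover}, you use Theorem \ref{thm:Every-graph-has-one-sided-d-ram-cover} directly, which is equivalent here) to get a Ramanujan covering of the subdivision, use the bijection between $r$-coverings of $G$ and of its subdivision, and transfer the bound via the spectral identity relating $\mathrm{Spec}(\mathrm{Sub}(X))$ to $\left\{ \pm\sqrt{k+\mu}\right\}$ plus zeros together with $\rho(\mathrm{Sub}(G))=\sqrt{k-1}+1$. The only difference is that you spell out the bookkeeping (suppressing degree-$2$ vertices, zero multiplicities, the degenerate $\mu=-k$ case) that the paper treats tersely via the rank of $A_{H}$ and the matrix $A_{G}+D_{G}$.
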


We remark that in this proposition the proof does not yield the analogous
result for coverings with new spectrum bounded from below by $-\rho\left(G\right)$.
\begin{proof}
Let $G$ be any finite connected graph with $n$ vertices and $m$
edges. Subdivide each of its edges by introducing a new vertex in
its middle, to obtain a new, bipartite graph $H$, with $n$ vertices
on one side and $m$ on the other. Clearly, there is a one-to-one
correspondence between (isomorphism types of) $r$-coverings of $G$
and (isomorphism types of) $r$-coverings of $H$. The rank of $A_{H}$
is at most $2n$, so $H$ has eigenvalue $0$ with multiplicity (at
least) $m-n$. As $H$ is bipartite, the remaining $2n$ eigenvalues
are symmetric around zero, and their squares are the eigenvalues of
$A_{G}+D_{G}$, where $A_{G}$ is the adjacency matrix of $G$ and
$D_{G}$ is diagonal with the degrees of the vertices.

Now assume G is $k$-regular, and let $\mu$ be an eigenvalue of $G$.
Then $\pm\sqrt{\mu+k}$ are eigenvalues of $H$ (and these are precisely
all the eigenvalues of $H$, aside to the $m-n$ zeros). By Corollary
\ref{cor:Every-bipartite-has-ram-cover}, $H$ has a Ramanujan $r$-covering
$\hat{H}_{r}$ for every $r$. Since the spectral radius of the $\left(k,2\right)$-biregular
tree is\footnote{In general, the spectral radius of the $\left(k,\ell\right)$-biregular
tree is $\sqrt{k-1}+\sqrt{\ell-1}$. } $\sqrt{k-1}+1$, every eigenvalue $\mu$ of the corresponding $r$-covering
$\hat{G}_{r}$ satisfies $\sqrt{\mu+k}\le\sqrt{k-1}+1$, i.e., $\mu\le2\sqrt{k-1}$. 
\end{proof}
The exact same argument can be used to extend also the statement of
Theorem \ref{thm:gamma-pi-one-sided-rmnjn-covering} to regular graphs
with loops: if $G$ is regular, possibly with loops, and $\left(\Gamma,\pi\right)$
satisfies $\left({\cal P}1\right)$ and $\left({\cal P}2\right)$,
then $G$ has a one-sided Ramanujan $\left(\Gamma,\pi\right)$-covering.

\subsection{The $d$-Matching Polynomial\label{subsec:The--Matching-Polynomial}}

An important ingredient in our proof of Theorem \ref{thm:Every-graph-has-one-sided-d-ram-cover}
is a new family of polynomials associated to a given graph. These
polynomials generalize the well-known matching polynomial of a graph
defined by Heilmann and Lieb \cite{heilmann1972theory}: let $m_{i}$
be the number of matchings in $G$ with $i$ edges, and set $m_{0}=1$.
The matching polynomial of $G$ is
\begin{equation}
\M_{G}\left(x\right)\overset{\mathrm{def}}{=}\sum_{i=0}^{\left\lfloor n/2\right\rfloor }\left(-1\right)^{i}m_{i}x^{n-2i}\in\mathbb{Z}\left[x\right].\label{eq:matching poly}
\end{equation}
The following is a crucial ingredient in the proof of the main result
of \cite{MSS13}:
\begin{thm}
\label{thm:1-matching-poly-is-ramanujan}\cite{heilmann1972theory}\footnote{Actually, \cite{heilmann1972theory} shows that $\M_{G}$ satisfies
this statement only when $G$ is regular. Apparently, the case of
irregular graphs was first noticed in \cite{MSS13}, even though some
of the original proofs of \cite{heilmann1972theory} work in the irregular
case as well.} The matching polynomial $\M_{G}$ of every finite connected graph
$G$ is real rooted with all its roots lying in the Ramanujan interval
$\left[-\rho\left(G\right),\rho\left(G\right)\right]$.
\end{thm}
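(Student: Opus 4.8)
The plan is to follow Godsil's path-tree argument, which has the virtue of yielding both assertions — real-rootedness and the location of the roots — at once. The starting point is the vertex-deletion recursion: for any vertex $v$ of $G$,
\[
\M_{G}\left(x\right)=x\,\M_{G-v}\left(x\right)-\sum_{u\sim v}\M_{G-u-v}\left(x\right),
\]
where the sum runs over the edges at $v$ (with multiplicity). This is immediate from the definition \eqref{eq:matching poly} by splitting matchings according to whether $v$ is covered, together with the multiplicativity of $\M$ over connected components. Dividing through by $\M_{G-v}$ and iterating, the rational function $\M_{G}/\M_{G-v}$ acquires a finite continued-fraction expansion whose structure is governed by the simple (vertex-non-repeating) paths of $G$ emanating from $v$.

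Next I would introduce Godsil's path tree $T=T\left(G,v\right)$: its vertices are the simple paths in $G$ starting at $v$, two of them are adjacent when one extends the other by a single vertex, and the root $r$ is the trivial path at $v$. Deleting $r$ decomposes $T$ into the path trees $T\left(G-v,u\right)$ indexed by the neighbours $u$ of $v$, so exactly the same continued-fraction expansion is produced for $\M_{T}/\M_{T-r}$; comparing the two expansions gives the identity $\M_{G}/\M_{G-v}=\M_{T}/\M_{T-r}$, and an induction on $\left|V\left(G\right)\right|$ (carrying multiplicativity of $\M$ through the — now typically disconnected — graph $G-v$) upgrades this to the divisibility $\M_{G}\left(x\right)\mid\M_{T}\left(x\right)$ in $\mathbb{R}\left[x\right]$. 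Since $T$ is a forest, its matching polynomial equals the characteristic polynomial of its symmetric adjacency matrix, hence is real-rooted; therefore so is $\M_{G}$.

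It then remains to confine the roots of $\M_{G}$, equivalently of $\M_{T}$, to the interval $\left[-\rho\left(G\right),\rho\left(G\right)\right]$. The key point is that $T\left(G,v\right)$ embeds as an induced subtree of the universal covering tree $\widetilde{G}$: a simple path from $v$ is in particular a non-backtracking walk from $v$, and the non-backtracking walks from $v$ are precisely the vertices of $\widetilde{G}$ (after fixing a lift of $v$), with adjacency given by one-step extension — which is exactly the adjacency in $T$. Consequently the adjacency operator of $T$ is the compression of $A_{\widetilde{G}}$ to the coordinate subspace indexed by $V\left(T\right)$, so the largest eigenvalue of the finite symmetric matrix $A_{T}$ is at most $\|A_{\widetilde{G}}\|=\rho\left(G\right)$; and since $T$ is bipartite, its spectrum is symmetric about $0$. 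Hence every root of $\M_{T}$, and therefore every root of $\M_{G}$, lies in $\left[-\rho\left(G\right),\rho\left(G\right)\right]$. (The symmetry could also be read off directly from $\M_{G}\left(-x\right)=\left(-1\right)^{n}\M_{G}\left(x\right)$.)

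The main obstacle is the middle paragraph — establishing the divisibility $\M_{G}\mid\M_{T\left(G,v\right)}$, i.e. Godsil's theorem. The delicate parts there are bookkeeping the continued fraction through repeated vertex deletions, where $G-v$ breaks into components so one must keep track of the multiplicativity of $\M$ in the induction, and verifying that $T-r$ is exactly the disjoint union of the smaller path trees $T\left(G-v,u\right)$. Everything else is routine once this is in hand: the recursion is an elementary count, the tree-matching $=$ characteristic-polynomial identity is standard, and the embedding into $\widetilde{G}$ together with the compression inequality $\|A_{T}\|\le\|A_{\widetilde{G}}\|$ handles the root bound. For contrast, Heilmann and Lieb's original route proves a stronger interlacing statement relating $\M_{G}$ and $\M_{G-v}$ directly by induction; the path-tree argument is shorter and makes the appearance of $\rho\left(G\right)$ geometrically transparent.
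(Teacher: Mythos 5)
Your proposal is correct, but there is nothing in the paper to compare it with: the paper does not prove this statement at all, it cites it as a classical theorem of Heilmann--Lieb, with a footnote noting that the irregular case was observed in \cite{MSS13} (where the proof given is essentially the one you outline). Your route is the standard Godsil path-tree argument, and it does deliver both claims: the vertex-deletion recursion and the decomposition of $T\left(G,v\right)-r$ into the path trees $T\left(G-v,u\right)$ give Godsil's identity $\M_{G}/\M_{G-v}=\M_{T}/\M_{T-r}$ and, by the induction you describe (using multiplicativity of $\M$ over the components of $G-v$), the divisibility $\M_{G}\mid\M_{T}$; real-rootedness then follows since $\M_{T}$ is the characteristic polynomial of a forest. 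Your treatment of the root bound is the right one for the irregular setting: identifying vertices of the universal cover with non-backtracking walks from $v$, the simple paths form an induced subtree, so $A_{T}$ is a compression of the self-adjoint operator $A_{\widetilde{G}}$ and its eigenvalues lie in $\left[-\rho\left(G\right),\rho\left(G\right)\right]$ (note the compression inequality already gives the lower bound $-\rho\left(G\right)$, so the appeal to bipartiteness is not needed). Two small points of bookkeeping: since the paper allows multiple edges, the path tree and the non-backtracking walks should be defined via edge sequences (your recursion already sums over edges at $v$ with multiplicity, so this is consistent); and if one wants the statement for graphs with loops, observe that loops contribute nothing to $\M_{G}$ while deleting them only shrinks the universal cover, so the loopless case suffices. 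With those conventions fixed, the argument is complete; the only part left as a sketch is the divisibility step, which you correctly identify as Godsil's theorem and outline accurately.
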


\begin{defn}
\label{def:d-matching-poly}Let $d\in\mathbb{Z}_{\ge1}$. The \textbf{$\mathbf{d}$-matching
polynomial} of a finite graph $G$, denoted $\M_{d,G}$\marginpar{$\M_{d,G}$},
is the average of the matching polynomials of all $d$-coverings of
$G$ (in ${\cal C}_{d,G}$ -- see Definition \ref{def:random-covering}). 
\end{defn}

For example, if $G$ is $K_{4}$ minus an edge, then 
\[
\M_{3,G}\left(x\right)=x^{12}-15x^{10}+81x^{8}-189x^{6}+180x^{4}-\frac{178}{3}x^{2}+4.
\]
Of course, $\M_{1,G}=\M_{G}$ is the usual matching polynomial of
$G$ (a graph is the only 1-covering of itself). Note that these generalized
matching polynomials of $G$ are monic, but their other coefficients
need not be integer valued. However, they seem to share many of the
nice properties of the usual matching polynomial. For instance, 
\begin{cor}
\label{cor:d-matching-poly has no non-ramanujan real zeros}Every
real root of $\M_{d,G}$ lies inside the Ramanujan interval $\left[-\rho\left(G\right),\rho\left(G\right)\right]$.
\end{cor}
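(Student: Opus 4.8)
The plan is to deduce this from the classical Heilmann--Lieb theorem (Theorem \ref{thm:1-matching-poly-is-ramanujan}) applied to each $d$-covering, together with the observation that a connected $d$-covering $H$ of $G$ has the same universal covering tree as $G$, hence $\rho(H) = \rho(G)$. First I would recall the definition: $\M_{d,G} = \mathbb{E}_{H \in {\cal C}_{d,G}} \M_H$, an average of the matching polynomials $\M_H$ over all $d$-coverings $H$ (some of which may be disconnected). For each such $H$, Theorem \ref{thm:1-matching-poly-is-ramanujan} tells us $\M_H$ is real-rooted with all roots in $[-\rho(H), \rho(H)]$. If $H$ is connected, $\rho(H) = \rho(G)$ exactly. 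If $H$ is disconnected, each connected component $H'$ of $H$ is itself a covering of $G$ (of degree $< d$), its universal covering tree is still $T$, so $\rho(H') = \rho(G)$; and $\M_H = \prod_{H'} \M_{H'}$, so all roots of $\M_H$ still lie in $[-\rho(G), \rho(G)]$. Thus every individual $\M_H$ has all its real roots (in fact all its roots) inside the Ramanujan interval.

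The subtlety is that an average of real-rooted polynomials whose roots all lie in a common interval $I$ need not be real-rooted, but it \emph{is} still true that all of its \emph{real} roots lie in $I$. The key step is therefore the following elementary sign argument: write $\M_{d,G}(x) = \frac{1}{|{\cal C}_{d,G}|}\sum_H \M_H(x)$. Each $\M_H$ is monic of degree $dn$, so for $x > \rho(G)$ every $\M_H(x) > 0$ (since $\M_H$ has no roots in $(\rho(G), \infty)$ and tends to $+\infty$), whence $\M_{d,G}(x) > 0$; similarly for $x < -\rho(G)$, each $\M_H(x)$ has sign $(-1)^{dn}$, so $\M_{d,G}(x)$ does too, and in particular is nonzero. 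Therefore $\M_{d,G}$ has no real root outside $[-\rho(G), \rho(G)]$, which is exactly the claim.

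I do not expect a genuine obstacle here; the only point requiring a word of care is the bookkeeping for disconnected coverings and the fact that $\rho$ is preserved under passing to (components of) coverings, which is standard since covering graphs share a universal covering tree. I would keep the write-up to a few lines: state that each $\M_H$ is real-rooted with roots in $[-\rho(G),\rho(G)]$ by Heilmann--Lieb applied componentwise, and then run the sign argument on the average. (The stronger statement, that $\M_{d,G}$ is in fact real-rooted, is not claimed here — it follows later from the real-rootedness of the center point of $\Delta_{d+1}(G)$ established via Proposition \ref{prop:avg-char-poly-of-random-coverings-is-real-rooted} — so for this corollary the sign argument suffices.)
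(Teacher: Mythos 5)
Your proposal is correct and follows essentially the same route as the paper: apply the Heilmann--Lieb theorem componentwise to each $d$-covering $H$ (each component being covered by the same universal covering tree as $G$), note that each $\M_{H}$ is monic of degree $nd$ and hence has a fixed sign on $\left(\rho\left(G\right),\infty\right)$ and on $\left(-\infty,-\rho\left(G\right)\right)$, and conclude by averaging. The paper's proof is exactly this sign argument, stated slightly more tersely; no further comment is needed.
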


\begin{proof}
Every covering of $G$ belongs to the same family as $G$ (even when
the covering is not connected, each component is covered by the same
tree as $G$). Recall that $n$ denotes the number of vertices of
$G$. The ordinary matching polynomial of every $H\in{\cal C}_{d,G}$
is a degree-$nd$ monic polynomial. By Theorem \ref{thm:1-matching-poly-is-ramanujan},
it is strictly positive in the interval $\left(\rho\left(G\right),\infty\right)$,
and is either strictly positive or strictly negative in $\left(-\infty,-\rho\left(G\right)\right)$
depending only on the parity of $nd$. The corollary now follows by
the definition of $\M_{d,G}$ as the average of such polynomials.
\end{proof}
In fact, all roots of $\M_{d,G}$ are real\footnote{Except when $G$ has loops and then we do not know if this necessarily
holds.}. For this, we use the full strength of Theorems \ref{thm:P1} and
\ref{thm:P2}. We show (Fact \ref{fact:std_satisfies_P1_and_P2} below)
that for every $d$, there is a pair $\left(\Gamma,\pi\right)$ of
a group $\Gamma$ and a $d$-dimensional representation $\pi$ which
satisfies both $\left({\cal P}1\right)$ and $\left({\cal P}2\right)$.
From theorem \ref{thm:P1} we obtain that $\M_{d,G}\left(x\right)=\mathbb{E}_{\gamma}\left[\phi_{\gamma,\pi}\left(x\right)\right]$
(the expectation over ${\cal C}_{\Gamma,G}$), and from Theorem \ref{thm:P2}
we obtain that $\mathbb{E}_{\gamma}\left[\phi_{\gamma,\pi}\left(x\right)\right]$
is real-rooted. We wonder if there is a more direct proof of the real-rootedness
of $\M_{d,G}$ (see Question \ref{ques:The-d-matching-polynomial}).
This yields:
\begin{thm}
\label{thm:M_d,G is Ramanujan}Let $G$ be a finite, connected\footnote{Connectivity here is required only because of the way $\rho\left(G\right)$
was defined. The real-rootedness holds for any finite graph. In the
general case, the $ $$d$-matching polynomial is the product of the
$d$-matching polynomials of the different connected components, so
the statement remains true for non-connected graphs if $\rho\left(G\right)$
is defined as the maximum of $\rho\left(G_{i}\right)$ over the different
components $G_{i}$ of $G$. }, loopless graph. For every $ $$d\in\mathbb{Z}_{\ge1}$, the polynomial
$\M_{d,G}$ is real rooted with all its roots contained in the Ramanujan
interval $\left[-\rho\left(G\right),\rho\left(G\right)\right]$.
\end{thm}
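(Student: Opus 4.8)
The plan is to assemble the statement directly from results already available in the excerpt, so the real content lies in producing a witness pair $\left(\Gamma,\pi\right)$ of dimension $d$ that simultaneously enjoys $\left({\cal P}1\right)$ and $\left({\cal P}2\right)$. First I would invoke (or establish) Fact~\ref{fact:std_satisfies_P1_and_P2}: for every $d\ge1$ the symmetric group $S_{d+1}$ together with its standard $d$-dimensional representation $\mathrm{std}$ satisfies both properties. For $\left({\cal P}1\right)$ one checks that the exterior powers $\bigwedge^{m}\mathrm{std}$, $0\le m\le d$, are precisely the irreducible representations of $S_{d+1}$ indexed by the hook partitions $(d+1-m,1^{m})$, hence irreducible and pairwise non-isomorphic; this is a classical computation with Young tableaux or, alternatively, follows from a character/inner-product argument using $\mathrm{std}\oplus\mathrm{triv}$ being the permutation representation on $d+1$ points. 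For $\left({\cal P}2\right)$ one notes that a transposition acts on the hyperplane $\{\sum x_{i}=0\}$ as a genuine reflection, so $\mathrm{std}(S_{d+1})$ is generated by reflections and is a (finite real, hence complex) reflection group.

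With that pair fixed, the argument is a short chain. Since $\left(\Gamma,\pi\right)=\left(S_{d+1},\mathrm{std}\right)$ satisfies $\left({\cal P}1\right)$ and $G$ is connected, Theorem~\ref{thm:P1} gives
\[
\mathbb{E}_{\gamma}\left[\phi_{\gamma,\pi}\left(x\right)\right]=\mathbb{E}_{H\in{\cal C}_{d,G}}\M_{H}\left(x\right)=\M_{d,G}\left(x\right),
\]
the last equality being the definition of the $d$-matching polynomial (Definition~\ref{def:d-matching-poly}). Since the same pair satisfies $\left({\cal P}2\right)$ and $G$ is loopless, the first bullet of Theorem~\ref{thm:P2} tells us $\mathbb{E}_{\gamma}\left[\phi_{\gamma,\pi}\left(x\right)\right]$ is real rooted; combining, $\M_{d,G}$ is real rooted. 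Finally, Corollary~\ref{cor:d-matching-poly has no non-ramanujan real zeros} already shows that \emph{every real root} of $\M_{d,G}$ lies in $\left[-\rho\left(G\right),\rho\left(G\right)\right]$; together with real-rootedness this places \emph{all} roots in the Ramanujan interval, which is the assertion of Theorem~\ref{thm:M_d,G is Ramanujan}.

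I would also remark on why looplessness is needed here but not for the weaker Corollary~\ref{cor:d-matching-poly has no non-ramanujan real zeros}: Theorem~\ref{thm:P2} (and Proposition~\ref{prop:avg-char-poly-of-random-coverings-is-real-rooted}, which underlies it) is stated for loopless graphs, since the interlacing-family/pseudo-reflection perturbation argument operates on $E^{+}(G)$ and the reduction to the matching polynomial via $\left({\cal P}1\right)$ also takes its cleanest form without loops; Heilmann--Lieb real-rootedness of the ordinary matching polynomial $\M_{H}$, by contrast, holds for all finite graphs, which is all Corollary~\ref{cor:d-matching-poly has no non-ramanujan real zeros} uses. For connectivity I would point to the footnote: $\rho(G)$ is only defined via connectivity, but the $d$-matching polynomial of a disconnected graph factors over components, so the statement extends by taking $\rho(G)=\max_{i}\rho(G_{i})$.

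The only genuine obstacle is the verification of Fact~\ref{fact:std_satisfies_P1_and_P2}, i.e.\ that $\bigwedge^{m}\mathrm{std}$ is irreducible for all $m$ and that these are mutually non-isomorphic; everything else is bookkeeping with the theorems already proved. I expect to handle this either by the explicit hook-partition identification of $\bigwedge^{m}\mathrm{std}$ (cleanest, and it makes non-isomorphism immediate since distinct partitions give inequivalent irreducibles), or by a direct character computation showing $\langle\chi_{\bigwedge^{m}\mathrm{std}},\chi_{\bigwedge^{m}\mathrm{std}}\rangle=1$ and $\langle\chi_{\bigwedge^{m}\mathrm{std}},\chi_{\bigwedge^{m'}\mathrm{std}}\rangle=0$ for $m\neq m'$, using that $\bigwedge^{\bullet}(\mathrm{std}\oplus\mathrm{triv})$ is the $\bigwedge^{\bullet}$ of the permutation module on $d+1$ letters. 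I would relegate these details to Section~\ref{sec:applications} (where the classification of pairs satisfying the two properties is discussed anyway) and keep the proof of Theorem~\ref{thm:M_d,G is Ramanujan} itself to the three-line deduction above.
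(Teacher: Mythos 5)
Your proposal is correct and follows the paper's own route exactly: it deduces Theorem \ref{thm:M_d,G is Ramanujan} by exhibiting the pair $\left(S_{d+1},\mathrm{std}\right)$ as in Fact \ref{fact:std_satisfies_P1_and_P2} (with the same hook-partition argument for $\left({\cal P}1\right)$ and transpositions-as-reflections argument for $\left({\cal P}2\right)$), then combining Theorem \ref{thm:P1}, Theorem \ref{thm:P2} and Corollary \ref{cor:d-matching-poly has no non-ramanujan real zeros}. Nothing is missing.
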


In the proof of Theorem \ref{thm:P1}, which gives an alternative
definition for $\M_{d,G}$, we will use a precise formula for this
polynomial which we now develop. Every $H\in{\cal C}_{d,G}$, a $d$-covering
of $G$, has exactly $d$ edges covering any specific edge in $G$,
and, likewise, $d$ vertices covering every vertex of $G$. Thus,
one can think of $\M_{d,G}$ as a generating function of multi-matchings
in $G$: each edge in $G$ can be picked multiple times so that each
vertex is covered by at most $d$ edges. We think of such a multi-matching
as a function $m\colon E^{+}\left(G\right)\to\mathbb{Z}_{\ge0}$.
The weight associated to every multi-matching $m$ is equal to the
average number of ordinary matchings projecting to $m$ in a random
$d$-covering of $G$. Namely, the weight is the average number of
matchings in $H\in{\cal C}_{d,G}$ with exactly $m\left(e\right)$
edges projecting to $e$, for every $e\in E^{+}\left(G\right)$.

To write an explicit formula, we extend $m$ to all $E\left(G\right)$
by $m\left(-e\right)=m\left(e\right)$. We also denote by $e_{v,1},\ldots,e_{v,\deg\left(v\right)}$
the edges in $E\left(G\right)$ emanating from a vertex $v\in V\left(G\right)$
(in an arbitrary order, loops at $v$ appearing twice, of course),
and by $m\left(v\right)$ the number of edges incident with $v$ in
the multi-matching. Namely,
\[
m\left(v\right)=\sum_{i=1}^{\deg\left(v\right)}m\left(e_{v,i}\right).
\]
Finally, we denote by $\left|m\right|$\marginpar{$\left|m\right|$}
the total number of edges in $m$ (with multiplicity), so $\left|m\right|=\sum_{e\in E^{+}\left(G\right)}m\left(e\right)$.
\begin{defn}
\label{def: d-multi-matching}A \textbf{$\mathbf{d}$-multi-matching}
of a graph $G$ is a function $m\colon E\left(G\right)\to\mathbb{Z}_{\ge0}$
with $m\left(-e\right)=m\left(e\right)$ for every $e\in E\left(G\right)$
and $m\left(v\right)\le d$ for every $v\in V\left(G\right)$. We
denote the set of $d$-multi-matchings of $G$ by $\multi_{d}\left(G\right)$\marginpar{${\scriptscriptstyle \multi_{d}\left(G\right)}$}.
\end{defn}

\begin{prop}
\label{prop:formula-for-M_d,G}Let $m$ be a $d$-multi-matching of
$G$. Denote\footnote{We use the notation $\binom{b}{a_{1},a_{2},\ldots,a_{k}}$ to denote
the multinomial coefficient $\frac{b!}{a_{1}!\ldots a_{k}!\left(b-\sum a_{i}\right)!}$.}
\begin{equation}
W_{d}\left(m\right)=\frac{\prod_{v\in V\left(G\right)}\binom{d}{m\left(e_{v,1}\right),\ldots,m\left(e_{v,\deg\left(v\right)}\right)}}{\prod_{e\in E^{+}\left(G\right)}\binom{d}{m\left(e\right)}}.\label{eq:weight-of-multi-matching}
\end{equation}
Then,
\begin{equation}
\M_{d,G}\left(x\right)=\sum_{m\in\multi_{d}\left(G\right)}\left(-1\right)^{\left|m\right|}\cdot W_{d}\left(m\right)\cdot x^{nd-2\left|m\right|}.\label{eq:M_d,G formula}
\end{equation}
\end{prop}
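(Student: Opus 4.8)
The plan is to expand both sides of \eqref{eq:M_d,G formula} combinatorially and match coefficients of each power of $x$. Unwinding the definition, $\M_{d,G}(x)=\mathbb{E}_{H\in{\cal C}_{d,G}}\M_{H}(x)$, and since every $H\in{\cal C}_{d,G}$ has exactly $nd$ vertices, rewriting \eqref{eq:matching poly} by grouping matchings according to their size gives $\M_{H}(x)=\sum_{M}(-1)^{|M|}x^{nd-2|M|}$, the sum over all matchings $M$ of $H$. Each such $M$ determines a function $m_{M}\colon E^{+}(G)\to\mathbb{Z}_{\ge0}$ by letting $m_{M}(e)$ be the number of edges of $M$ lying over $e$; extending by $m_{M}(-e)=m_{M}(e)$, one checks that the matching condition on the $d$ vertices in the fiber over a vertex $v$ forces $m_{M}(v)=\sum_{i=1}^{\deg(v)}m_{M}(e_{v,i})\le d$ (a loop at $v$ contributes two edge-ends, each occupying one fiber vertex per lift; a non-loop edge of $M$ over $e_{v,i}$ occupies one fiber vertex), so $m_{M}\in\multi_{d}(G)$, and clearly $|M|=|m_{M}|$. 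Interchanging the finite sums with the expectation, $\M_{d,G}(x)=\sum_{m\in\multi_{d}(G)}(-1)^{|m|}x^{nd-2|m|}\cdot\mathbb{E}_{H}\bigl[\#\{M\ \text{a matching of}\ H:m_{M}=m\}\bigr]$, so it suffices to prove
\[
\mathbb{E}_{H\in{\cal C}_{d,G}}\bigl[\#\{M\ \text{a matching of}\ H:m_{M}=m\}\bigr]=W_{d}(m)\qquad\text{for each fixed }m\in\multi_{d}(G).
\]

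To evaluate this expectation I would parametrize the matchings $M$ of $H$ with $m_{M}=m$ in a way that separates the part independent of the covering from the part depending on the permutations $\sigma_{e}$. The covering-independent data is: for each oriented edge $e\in E^{+}(G)$ with $h(e)=u$, $t(e)=w$, a pair of subsets $A_{e},B_{e}\subseteq\{1,\dots,d\}$ with $|A_{e}|=|B_{e}|=m(e)$ --- where $A_{e}$ records which lifts of $e$, indexed via the head fiber, will belong to $M$, and $B_{e}$ the corresponding tail indices --- subject to the requirement that, for every vertex $v$, the subsets among $\{A_{e}:h(e)=v\}\cup\{B_{e}:t(e)=v\}$ are pairwise disjoint (for a loop $\ell$ at $v$ both $A_{\ell}$ and $B_{\ell}$ appear in this collection). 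The covering-dependent requirement is simply $\sigma_{e}(A_{e})=B_{e}$ for every $e\in E^{+}(G)$. I would verify that, given such data and a covering satisfying these constraints, the edge set $M=\{\{u_{i},w_{\sigma_{e}(i)}\}:e\in E^{+}(G),\ i\in A_{e}\}$ is a genuine matching whose projection is $m$, that every matching with projection $m$ arises this way, and that the correspondence is a bijection. The per-vertex disjointness is exactly what guarantees the matching property, and it also automatically excludes the degenerate configurations possible with loops or multiple edges (a lift of a loop that is itself a loop in $H$, or two coinciding lifts of a loop).

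With this in hand the identity follows from linearity of expectation and independence of the $\sigma_{e}$. The number of valid covering-independent data factors over the vertices: at $v$ one is choosing ordered pairwise-disjoint subsets of $\{1,\dots,d\}$ of sizes $m(e_{v,1}),\dots,m(e_{v,\deg(v)})$, which can be done in $\binom{d}{m(e_{v,1}),\dots,m(e_{v,\deg(v)})}$ ways, and choices at different vertices involve disjoint fibers, hence are independent; so the total is $\prod_{v\in V(G)}\binom{d}{m(e_{v,1}),\dots,m(e_{v,\deg(v)})}$. For any fixed datum, since the $\sigma_{e}$ are independent and uniform in $S_{d}$, the probability that $\sigma_{e}(A_{e})=B_{e}$ for all $e$ equals $\prod_{e\in E^{+}(G)}\Pr_{\sigma\in S_{d}}[\sigma(A_{e})=B_{e}]$, and for any $A,B\subseteq\{1,\dots,d\}$ with $|A|=|B|=k$ there are exactly $k!\,(d-k)!$ permutations with $\sigma(A)=B$, giving probability $1/\binom{d}{k}$. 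Summing over data and multiplying,
\[
\mathbb{E}_{H}\bigl[\#\{M:m_{M}=m\}\bigr]=\frac{\prod_{v\in V(G)}\binom{d}{m(e_{v,1}),\dots,m(e_{v,\deg(v)})}}{\prod_{e\in E^{+}(G)}\binom{d}{m(e)}}=W_{d}(m),
\]
which is \eqref{eq:weight-of-multi-matching}, and substituting back yields \eqref{eq:M_d,G formula}.

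The step I expect to be the main obstacle is the bijection in the second paragraph: checking carefully that matchings of $H$ with projection $m$ correspond exactly to valid data together with the constraints $\sigma_{e}(A_{e})=B_{e}$, and in particular that the per-fiber disjointness conditions are precisely equivalent to the matching property in the presence of loops and multiple edges in $G$, where lifts can coincide or become loops in $H$. Everything afterward is linearity of expectation together with the elementary count of permutations of $\{1,\dots,d\}$ carrying one $k$-subset onto another.
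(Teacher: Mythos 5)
Your proposal is correct and follows essentially the same route as the paper's proof: reduce to showing that $W_{d}\left(m\right)$ is the expected number of matchings of a random $d$-covering projecting to a fixed $m$, count the choices of covered fiber vertices (giving the multinomial numerator), and use that a uniform $\sigma\in S_{d}$ carries one prescribed $m\left(e\right)$-subset onto another with probability $\binom{d}{m\left(e\right)}^{-1}$, independently over the edges. Your more explicit bookkeeping with the pairs $\left(A_{e},B_{e}\right)$ and the per-vertex disjointness just spells out the loop/multi-edge subtleties that the paper handles with a brief remark.
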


\begin{proof}
Every matching of a $d$-covering $H\in{\cal C}_{d,G}$ projects to
a unique multi-matching $m$ of $G$ covering every vertex of $G$
at most $d$ times. Thus, it is enough to show that $W_{d}\left(m\right)$
is exactly the average number of ordinary matchings projecting to
$m$ in a random $H\in{\cal C}_{d,G}$. Every such matching in $H$
contains exactly $m\left(e\right)$ edges in the fiber above every
$e\in E\left(G\right)$. Assume we know, for each $e\in E\left(G\right)$,
which \emph{vertices} in $H$ are covered by the $m\left(e\right)$
edges above it. So there are $m\left(e\right)$ specific vertices
in the fiber above $h\left(e\right)$, and $m\left(e\right)$ specific
vertices in the fiber above $t\left(e\right)$. The probability that
a random permutation in $S_{d}$ matches specific $m\left(e\right)$
elements in $\left\{ 1,\ldots,d\right\} $ to specific $m\left(e\right)$
elements in $\left\{ 1,\ldots,d\right\} $ is 
\[
\frac{m\left(e\right)!\left(d-m\left(e\right)\right)!}{d!}=\binom{d}{m\left(e\right)}^{-1}.
\]
Thus, the denominator of $W_{d}\left(m\right)$ is equal to the probability
that a random $d$-covering has a matching which projects to $m$
and agrees with the particular choice of vertices. We are done as
the numerator is exactly the number of possible choices of vertices.
(Recall that since we deal with ordinary matchings in $H$, every
vertex is covered by at most one edge, so the set of vertices in the
fiber above $v\in V\left(G\right)$ which are matched by the pre-image
of $e_{v,i}$ is disjoint from those covered by the pre-image of $e_{v,j}$
whenever $i\ne j$.) Finally, we remark that the formula and proof
remain valid also for graphs with multiple edges or loops.
\end{proof}
The proof of Theorem \ref{thm:P1} in Section \ref{sec:P1} will consist
of showing that $\mathbb{E}_{\gamma}\left[\phi_{\gamma}\left(x\right)\right]$
is equal to the expression in \eqref{eq:M_d,G formula}.

To summarize, here is what this paper shows about the $d$-matching
polynomial $\M_{d,G}$ of the graph $G$:
\begin{itemize}
\item It can be defined by any of the following:

\begin{enumerate}
\item $\mathbb{E}_{H\in{\cal C}_{d,G}}\left[\M_{H}\right]$ --- the average
matching polynomial of a random $d$-covering of $G$
\item $\mathbb{E}_{H\in{\cal C}_{d+1,G}}\left[\frac{\det\left(xI-A_{H}\right)}{\det\left(xI-A_{G}\right)}\right]=\mathbb{E}_{H\in{\cal C}_{d+1,G}}\left[\prod_{\mu\in\mathrm{newSpec\left(H\right)}}\left(x-\mu\right)\right]$
- the average ``new part'' of the characteristic polynomial of a
random $\left(d+1\right)$-covering $H$ of $G$
\item $\mathbb{E}_{\gamma\in{\cal C}_{\Gamma,G}}\left[\phi_{\gamma,\pi}\right]$
- the average characteristic polynomial of a random $\left(\Gamma,\pi\right)$-covering
of $G$ whenever $\left(\Gamma,\pi\right)$ satisfies $\left({\cal P}1\right)$
and $\pi$ is $d$-dimensional
\item $\sum_{m\in\multi_{d}\left(G\right)}\left(-1\right)^{\left|m\right|}\cdot W_{d}\left(m\right)\cdot x^{nd-2\left|m\right|}$,
with $W_{d}\left(m\right)$ defined as in \eqref{eq:weight-of-multi-matching}.
\end{enumerate}
\item If $G$ has no loops, then $\M_{d,G}$ is real-rooted with all its
roots in the Ramanujan interval.
\end{itemize}

\subsection{Group Labelings of Graphs}

The content of the current subsection is not used in the rest of the
paper with the exception of Remark \ref{remark:abelian groups}. We
choose to give it here, albeit without proofs, for the sake of completeness. 

The model ${\cal C}_{r,G}$ we use for a random $r$-covering of a
graph $G$ is based on a uniformly random labeling $\gamma\colon E\left(G\right)\to S_{r}$.
This is generalized in Definition \ref{def:gamma-pi-coverings} to
${\cal C}_{\Gamma,G}$, a probability-space of random $\Gamma$-labelings
of the graph $G$. There are natural equivalent ways to obtain the
same distribution on (isomorphism) types of $r$-coverings or $\Gamma$-labelings. 

Two $r$-covering $H_{1}$ and $H_{2}$ of $G$ are isomorphic if
there is a graph isomorphism between them which respects the covering
maps. A similar equivalence relation can be given for $\Gamma$-labelings.
This is the equivalence relation generated, for example, by the equivalence
of the following two labelings of the edges incident to some vertex:

\includegraphics[viewport=0bp 270bp 600bp 510bp,scale=0.4]{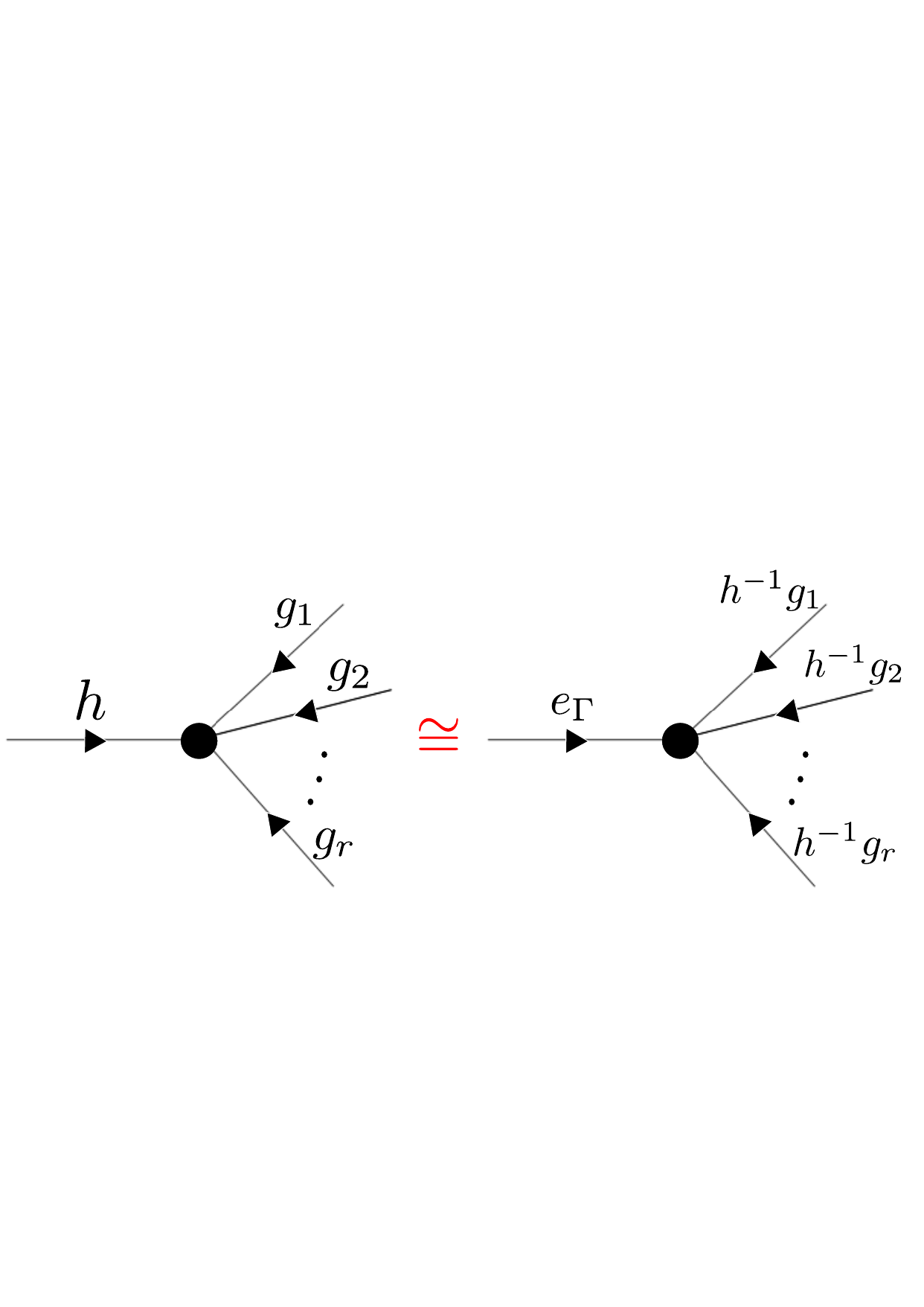}

\noindent (here $e_{\Gamma}$ is the identity element of $\Gamma$).
For example, if the $\Gamma$-labelings $\gamma_{1}$ and $\gamma_{2}$
of $G$ are isomorphic, then $\mathrm{Spec}\left(A_{\gamma_{1},\pi}\right)=\mathrm{Spec}\left(A_{\gamma_{2},\pi}\right)$
for any finite dimensional representation $\pi$ of $\Gamma$.
\begin{claim}
\label{claim: equivalences of labelings}Let $G$ be a finite connected
graph and $\Gamma$ a finite group. Let $T$ be a spanning tree of
$G$. The following three probability models yield the same distribution
on isomorphism types of $\Gamma$-labelings of $G$:

\begin{enumerate}
\item ${\cal C}_{\Gamma,G}$ --- uniform distribution on labelings $\gamma\colon E^{+}\left(G\right)\to\Gamma$
\item uniform distribution on homomorphisms $\pi_{1}\left(G\right)\to\Gamma$
\item an arbitrary fixed $\Gamma$-labeling of $E^{+}\left(T\right)$ (e.g.,
with the constant identity labeling) and a uniform distribution on
labelings of the remaining edges $E^{+}\left(G\right)\setminus E\left(T\right)$.
\end{enumerate}
\end{claim}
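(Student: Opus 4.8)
The plan is to fix a spanning tree $T$ of $G$ and a root vertex $v_0\in V(G)$ and to run everything through the \emph{gauge group} $\Gamma^{V(G)}$, which acts on $\Gamma$-labelings by $(g\cdot\gamma)(e)=g_{t(e)}\,\gamma(e)\,g_{h(e)}^{-1}$ for $g=(g_v)_{v\in V(G)}$. This action is well defined — it respects $\gamma(-e)=\gamma(e)^{-1}$, and a loop at $v$ is conjugated by $g_v$ — and its single-vertex-supported elements are exactly the local moves pictured just above the claim; hence $\Gamma^{V(G)}$ is the group generated by those moves, and two $\Gamma$-labelings of $G$ are isomorphic precisely when they lie in one $\Gamma^{V(G)}$-orbit. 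The proof then becomes a change of coordinates on $\Gamma^{E^+(G)}$ adapted to $T$.

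First I would establish the normalization step. For $\tau\in\Gamma^{E^+(G)\setminus E(T)}$, let $\gamma_0(\tau)$ be the unique labeling that equals $e_\Gamma$ on $E(T)$ and equals $\tau$ off $T$, and let $\mathcal G_0=\{g\in\Gamma^{V(G)}:g_{v_0}=e_\Gamma\}$. Given any $\gamma$, process the vertices in order of $T$-distance from $v_0$: put $g_{v_0}=e_\Gamma$, and for $v\neq v_0$ with $T$-parent $u$ joined by the oriented tree edge $e$ (say $t(e)=v$, $h(e)=u$), put $g_v:=g_u\,\gamma(e)^{-1}$. This produces a well-defined $g\in\mathcal G_0$ with $g\cdot\gamma$ trivial on $T$; writing $\tau$ for its restriction off $T$ we get $\gamma=g^{-1}\cdot\gamma_0(\tau)$. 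Hence the map
\[
F\colon\ \mathcal G_0\times\Gamma^{E^+(G)\setminus E(T)}\longrightarrow\Gamma^{E^+(G)},\qquad F(g,\tau)=g\cdot\gamma_0(\tau),
\]
is surjective, and since $|\mathcal G_0|=|\Gamma|^{|V(G)|-1}$ while $|E^+(T)|=|V(G)|-1$, its domain and codomain both have cardinality $|\Gamma|^{|E^+(G)|}$; therefore $F$ is a bijection.

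With $F$ established I would deduce the three equivalences as follows. Pulling back the uniform measure on $\Gamma^{E^+(G)}$ along $F$ gives the product of the uniform measures on $\mathcal G_0$ and on $\Gamma^{E^+(G)\setminus E(T)}$; and since $F(g,\tau)=g\cdot\gamma_0(\tau)$ is isomorphic to $\gamma_0(\tau)$, the isomorphism type of $F(g,\tau)$ does not depend on $g$. So the isomorphism-type distribution coming from the first model equals the pushforward of the uniform measure on $\Gamma^{E^+(G)\setminus E(T)}$ under $\tau\mapsto[\gamma_0(\tau)]$ — which is precisely the third model with the constant-identity labeling on $T$. Replacing that fixed labeling on $T$ by any other fixed $\beta$ changes nothing: the unique $h\in\mathcal G_0$ trivializing $\beta$ carries ``tree part $\beta$, uniform off $T$'' bijectively and isomorphism-type-preservingly onto ``tree part $e_\Gamma$, uniform off $T$''. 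Finally, $T$ presents $\pi_1(G)$ (based at $v_0$) as the free group on $E^+(G)\setminus E(T)$, the non-tree edge $e$ corresponding to the loop $\ell_e$ that goes from $v_0$ to $t(e)$ in $T$, crosses $e$, and returns in $T$; thus a homomorphism $\phi\colon\pi_1(G)\to\Gamma$ is the same datum as $\bigl(\phi(\ell_e)\bigr)_e\in\Gamma^{E^+(G)\setminus E(T)}$, and the uniform measure on $\mathrm{Hom}(\pi_1(G),\Gamma)$ matches the uniform measure on $\Gamma^{E^+(G)\setminus E(T)}$. Under the monodromy correspondence $\phi$ is sent to $\gamma_0(\tau)$ with $\tau=(\phi(\ell_e))_e$ — the holonomy of $\gamma_0(\tau)$ around $\ell_e$ being $\tau(e)$ since the tree edges contribute trivially — so the second model again yields the pushforward of uniform $\tau$ under $\tau\mapsto[\gamma_0(\tau)]$, the same distribution as the first and third.

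The main obstacle is essentially bookkeeping: pinning down that the gauge action is well defined and that the group it generates is exactly the isomorphism relation used in the claim (including loops and multiple edges), and checking that the vertex-by-vertex normalization is consistent — each $g_v$ depends only on the already-assigned value at the $T$-parent, so the order among equidistant vertices is irrelevant. Beyond that, I would only need to decide how much of the classical graph-covering dictionary (freeness of $\pi_1(G)$ on the non-tree edges, the monodromy correspondence) to reprove versus cite; a short reference should suffice, and once $F$ is in hand the rest is formal.
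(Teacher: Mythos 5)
Your argument is correct, and in fact it supplies more than the paper does: the paper states this claim explicitly ``loosely, for the sake of completeness'' and gives no proof at all (the isomorphism relation is only indicated by a picture of a single-vertex relabeling move), so there is no in-paper argument to compare against. Your route is the standard and natural one, carried out completely: you identify isomorphism of $\Gamma$-labelings with orbits of the gauge action of $\Gamma^{V(G)}$ (single-vertex-supported elements being exactly the pictured moves, and loops correctly handled by conjugation), then the tree-normalization map $F\colon\mathcal G_0\times\Gamma^{E^{+}(G)\setminus E(T)}\to\Gamma^{E^{+}(G)}$ is surjective by the vertex-by-vertex trivialization along $T$ and bijective by the cardinality count $|\Gamma|^{|V|-1}\cdot|\Gamma|^{|E^{+}|-|V|+1}=|\Gamma|^{|E^{+}|}$, which immediately gives the equivalence of models (1) and (3) with the identity tree labeling; the passage to an arbitrary fixed tree labeling $\beta$ via the unique $h\in\mathcal G_0$ trivializing $\beta$, and the identification of model (2) via freeness of $\pi_{1}(G)$ on the non-tree edges, are both fine. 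Two small points of bookkeeping, neither of which affects correctness: whether the gauge formula is $g_{t(e)}\gamma(e)g_{h(e)}^{-1}$ or its head/tail mirror depends on the paper's (unseen) figure convention, and the holonomy of $\gamma_{0}(\tau)$ around $\ell_{e}$ may come out as $\tau(e)^{\pm1}$ (or a conjugate) depending on traversal direction --- but in every case the relevant map is a bijection carrying uniform measure to uniform measure, so the distributional statement is unaffected. Citing the classical facts (freeness of $\pi_{1}$ of a graph on non-tree edges, the monodromy correspondence) rather than reproving them is entirely appropriate here.
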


\subsection{Group Representations\label{subsec:Group-Representations}}

Let $\Gamma$ be a group. A (complex, finite-dimensional) \textbf{representation}\footnote{A standard reference for the subject of group representations is \cite{fulton1991representation}.}
of $\Gamma$ is any group homomorphism $\pi\colon\Gamma\to\mathrm{GL}_{d}\left(\mathbb{C}\right)$
for some $d\in\mathbb{Z}_{\ge1}$; if $\Gamma$ is a topological group,
we also demand $\pi$ to be continuous. We then say $\pi$ is a $d$-dimensional
representation. The representation is called \textbf{faithful} if
$\pi$ is injective. Two $d$-dimensional representations $\pi_{1}$
and $\pi_{2}$ are \textbf{isomorphic} if they are conjugate to each
other in the following sense: there is some $B\in\mathrm{GL}_{d}\left(\mathbb{C}\right)$
such that $\pi_{2}\left(g\right)=B^{-1}\pi_{1}\left(g\right)B$ for
every $g\in\Gamma$. The \textbf{trivial}\emph{ }representation is
the constant function $\mathrm{triv}\colon\Gamma\to\mathrm{GL}_{1}\left(\mathbb{C}\right)\cong\mathbb{C}^{*}$
mapping all elements to $1$. The direct sum of two representations
$\pi_{1}$ and $\pi_{2}$ of dimensions $d_{1}$ and $d_{2}$, respectively,
is a $\left(d_{1}+d_{2}\right)$-dimensional representation $\pi_{1}\oplus\pi_{2}\colon\Gamma\to\mathrm{GL}_{d_{1}+d_{2}}\left(\mathbb{C}\right)$
where $\left(\pi_{1}\oplus\pi_{2}\right)\left(g\right)$ is a block-diagonal
matrix, with a $d_{1}\times d_{1}$ block of $\pi_{1}\left(g\right)$
and a $d_{2}\times d_{2}$ block of $\pi_{2}\left(g\right)$. A representation
$\pi$ is called \textbf{irreducible} if is not isomorphic to the
direct sum of two representations\footnote{Equivalently, $\pi$ is irreducible if it has no non-trivial invariant
subspace, namely, no $\left\{ 0\right\} \ne W\lvertneqq\mathbb{C}^{d}$
with $\pi\left(g\right)\left(W\right)\le W$ for every $g\in\Gamma$.}. Otherwise, it is called \textbf{reducible}. 

Let $U\left(d\right)$ be the unitary group, that is, the subgroups
of matrices $A\in\mathrm{GL}_{d}\left(\mathbb{C}\right)$ whose inverse
is $A^{*}$, the conjugate-transpose of $A$. The representation $\pi$
is called \textbf{unitary} if its image in $\mathrm{GL}_{d}\left(\mathbb{C}\right)$
is conjugate to a subgroup of $U\left(d\right)$. In other words,
it is isomorphic to a representation $\Gamma\to U\left(d\right)$.
All representations of finite groups are unitary: e.g., conjugate
$\pi$ by $B=\left(\frac{1}{\left|\Gamma\right|}\sum_{g\in\Gamma}\pi\left(g\right)^{*}\pi\left(g\right)\right)^{1/2}$
to obtain a unitary image. 
\begin{claim}
\label{claim: real spectrum}Let $\pi$ be a unitary representation
of $\Gamma$ and $A_{\gamma,\pi}$ a $\left(\Gamma,\pi\right)$-covering
of some graph $G$. Then the spectrum of $A_{\gamma,\pi}$ is real.
\end{claim}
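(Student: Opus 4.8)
The plan is to show that $A_{\gamma,\pi}$ is conjugate to a Hermitian matrix; since conjugation preserves the spectrum and a Hermitian matrix has real spectrum, the claim follows at once.

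First I would reduce to the case in which $\pi$ actually maps into the unitary group $U\left(d\right)$, not merely into a conjugate of it. By the definition of a unitary representation there is some $B\in\mathrm{GL}_{d}\left(\mathbb{C}\right)$ with $B\,\pi\left(g\right)\,B^{-1}\in U\left(d\right)$ for all $g\in\Gamma$. Conjugating $A_{\gamma,\pi}$ by the block-scalar matrix $I_{n}\otimes B$ replaces each $d\times d$ block by its $B$-conjugate, hence replaces $\pi$ throughout by the isomorphic representation $g\mapsto B\,\pi\left(g\right)\,B^{-1}$; this similarity does not change the spectrum. So I may and do assume $\pi\left(\Gamma\right)\subseteq U\left(d\right)$, i.e. $\pi\left(g\right)^{-1}=\pi\left(g\right)^{*}$ for every $g\in\Gamma$.

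Next I would check directly that $A_{\gamma,\pi}$ is Hermitian. Writing it in $d\times d$ block form indexed by $V\left(G\right)\times V\left(G\right)$, the $\left(u,v\right)$-block is $M_{u,v}=\sum_{e:u\to v}\pi\left(\gamma\left(e\right)\right)$, the sum over all oriented edges from $u$ to $v$. Because $G$ is undirected, the involution $e\mapsto-e$ is a bijection between the oriented edges from $v$ to $u$ and those from $u$ to $v$; combining this with the relations $\gamma\left(-e\right)=\gamma\left(e\right)^{-1}$ and $\pi\left(g^{-1}\right)=\pi\left(g\right)^{*}$ gives
\[
M_{v,u}=\sum_{e:\,v\to u}\pi\left(\gamma\left(e\right)\right)=\sum_{e:\,u\to v}\pi\left(\gamma\left(-e\right)\right)=\sum_{e:\,u\to v}\pi\left(\gamma\left(e\right)\right)^{*}=M_{u,v}^{*}.
\]
Thus the $\left(v,u\right)$-block of $A_{\gamma,\pi}$ is the conjugate-transpose of its $\left(u,v\right)$-block; the same computation with $u=v$ shows each diagonal block $M_{v,v}$ is itself Hermitian (this also accounts correctly for a loop at $v$, which occurs in $E\left(G\right)$ with both orientations and contributes $\pi\left(\gamma\left(e\right)\right)+\pi\left(\gamma\left(e\right)\right)^{*}$ to $M_{v,v}$). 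Hence $A_{\gamma,\pi}$ is a Hermitian $nd\times nd$ matrix, and therefore $\mathrm{Spec}\left(A_{\gamma,\pi}\right)\subseteq\mathbb{R}$ by the spectral theorem.

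I do not expect a genuine obstacle here; the only point requiring care is the bookkeeping — correctly using the orientation conventions ($E^{+}$ versus $E^{-}$ and the involution $e\mapsto-e$), making sure loops are handled (they appear twice, once in each orientation), and carrying out the reduction from ``conjugate to unitary'' to ``unitary'' cleanly. Everything else is the standard fact that Hermitian matrices have real eigenvalues.
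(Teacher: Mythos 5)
Your proposal is correct and follows exactly the paper's argument: reduce to $\pi\left(\Gamma\right)\subseteq U\left(d\right)$ via the isomorphism-invariance of the spectrum, then observe that $A_{\gamma,\pi}$ is Hermitian (which the paper asserts "by definition" and you verify block-by-block, including the loop case) and apply the spectral theorem. No gaps.
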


\begin{proof}
It is easy to see that $\mathrm{Spec\left(A_{\gamma,\pi}\right)=\mathrm{Spec}\left(A_{\gamma,\pi'}\right)}$
whenever $\pi$ and $\pi'$ are isomorphic. Thus, assume without loss
of generality that $\pi\left(\Gamma\right)\subseteq U\left(d\right)$.
Then, by definition, $A_{\gamma,\pi}$ is Hermitian, and the statement
follows.
\end{proof}
The $r$-dimensional representation $\pi$ of $S_{r}$ mapping every
$\sigma\in S_{r}$ to the corresponding permutation matrix is reducible:
the $1$-dimensional subspace of constant vectors $\left\langle \mathbf{1}\right\rangle \le\mathbb{C}^{r}$
is invariant under this representation. The action of this representation
on the orthogonal complement $\left\langle \mathbf{1}\right\rangle ^{\bot}$
is an $\left(r-1\right)$-dimensional irreducible representation of
$S_{r}$ called the \textbf{standard}\emph{ }representation and denoted
\textbf{$\mathrm{std}$}. The action on $\left\langle \mathbf{1}\right\rangle $
is isomorphic to the trivial representation. Thus, $\pi\cong\mathrm{std}\oplus\mathrm{triv}$.
\begin{claim}
\label{claim:equivalence-of-coverings-and-std-coverings}If $\gamma\colon E\left(G\right)\to S_{r}$
is an $S_{r}$-labeling of $G$, then the \emph{new} spectrum of the
$r$-covering of $G$ associated with $\gamma$ is equal to the spectrum
of $A_{\gamma,\mathrm{std}}$.\\
In particular, every (one-sided) Ramanujan $r$-covering of $G$ corresponds
to a unique (one-sided, respectively) Ramanujan $\left(S_{r},\mathrm{std}\right)$-covering
of $G$.
\end{claim}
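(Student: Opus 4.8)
The plan is to recognize $A_H$, the adjacency matrix of the $r$-covering $H$ attached to the labeling $\gamma$, as the matrix $A_{\gamma,\pi}$ of a $(\Gamma,\pi)$-covering with $\Gamma=S_r$ and $\pi$ the $r$-dimensional permutation representation, and then to peel off the trivial summand of $\pi$ using $\pi\cong\mathrm{std}\oplus\mathrm{triv}$.

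First I would record the identification $A_H=A_{\gamma,\pi}$. Under the bijection $\mathbb{C}^{V(H)}\xrightarrow{\ \sim\ }\bigoplus_{v\in V(G)}\mathbb{C}^r$, $f\mapsto\overline f$, sending $f$ to $v\mapsto(f(v_1),\dots,f(v_r))^{T}$, the discussion around \eqref{eq:vector-eigenfunctions} shows that $A_Hf=\mu f$ is equivalent to \eqref{eq:vector-eigenfunctions}, which is exactly the eigenvalue equation for the $nr\times nr$ matrix whose $(u,v)$ block is $\sum_{e:u\to v}\pi(\gamma(e))$ --- that is, for $A_{\gamma,\pi}$. Thus $A_H$ and $A_{\gamma,\pi}$ differ only by a relabeling of basis vectors, so $\mathrm{Spec}(A_H)=\mathrm{Spec}(A_{\gamma,\pi})$ as multisets.

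Next I would split $\pi$. As recalled just before the claim, $\mathbb{C}^r=\langle\mathbf 1\rangle^{\bot}\oplus\langle\mathbf 1\rangle$ is $\pi$-invariant with $\pi|_{\langle\mathbf 1\rangle^{\bot}}=\mathrm{std}$ and $\pi|_{\langle\mathbf 1\rangle}\cong\mathrm{triv}$. Pick $B\in\mathrm{GL}_r(\mathbb{C})$ whose first $r-1$ columns span $\langle\mathbf 1\rangle^{\bot}$ and whose last column is $\mathbf 1$, so that $B^{-1}\pi(\sigma)B=\mathrm{std}(\sigma)\oplus 1$ for every $\sigma\in S_r$. Conjugating $A_{\gamma,\pi}$ by $I_n\otimes B$ replaces each block $\sum_e\pi(\gamma(e))$ by $\sum_e\bigl(\mathrm{std}(\gamma(e))\oplus 1\bigr)$; permuting the $nr$ coordinates so that the $n(r-1)$ ``std-coordinates'' precede the $n$ ``triv-coordinates'' then displays the result as $A_{\gamma,\mathrm{std}}\oplus A_{\gamma,\mathrm{triv}}$. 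Because $\mathrm{triv}(\sigma)=1$ for all $\sigma$, the matrix $A_{\gamma,\mathrm{triv}}$ equals $A_G$. Hence
\[
\mathrm{Spec}(A_H)=\mathrm{Spec}(A_{\gamma,\mathrm{std}})\ \uplus\ \mathrm{Spec}(A_G)
\]
as multisets. Moreover, the $\langle\mathbf 1\rangle$-part of $\bigoplus_v\mathbb{C}^r$ consists precisely of the functions constant on fibers, i.e.\ of the $f\circ p$ for $f\colon V(G)\to\mathbb{C}$, on which $A_H$ acts as $A_G$; so the copy of $\mathrm{Spec}(A_G)$ appearing above is exactly the multiset of old eigenvalues of $H$. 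Removing it leaves the new spectrum, which is therefore $\mathrm{Spec}(A_{\gamma,\mathrm{std}})$, proving the first assertion. The ``in particular'' then follows at once: $\gamma$ determines both $H$ and the $(S_r,\mathrm{std})$-covering $A_{\gamma,\mathrm{std}}$, and the new spectrum of $H$ equals $\mathrm{Spec}(A_{\gamma,\mathrm{std}})$, so one of the two is (one-sided) Ramanujan exactly when the other is.

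The step I expect to need the most care is the bookkeeping in the first two paragraphs: making the head/tail orientation conventions in the definition of $A_{\gamma,\pi}$ agree with those in the covering construction so that \eqref{eq:vector-eigenfunctions} is literally the block eigenvalue equation (with loops contributing twice on both sides), and checking that after conjugation by $I_n\otimes B$ the $\mathrm{triv}$-block is genuinely the restriction to the fiberwise-constant functions --- this is what guarantees that the decomposition $\mathrm{Spec}(A_G)\uplus\mathrm{Spec}(A_{\gamma,\mathrm{std}})$ is the old/new split and not merely some partition with the correct multiset union. Everything else is elementary linear algebra over $\mathbb{C}$.
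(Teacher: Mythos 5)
Your proposal is correct and follows essentially the same route as the paper: identify $A_H$ with $A_{\gamma,\pi}$ for the permutation representation $\pi\cong\mathrm{std}\oplus\mathrm{triv}$, use that the spectrum of $A_{\gamma,\pi_1\oplus\pi_2}$ is the multiset union of the spectra of $A_{\gamma,\pi_1}$ and $A_{\gamma,\pi_2}$, and note $A_{\gamma,\mathrm{triv}}=A_G$. Your extra check that the $\mathrm{triv}$-block is exactly the subspace of fiberwise-constant functions (so the split really is the old/new split) is a welcome piece of bookkeeping that the paper leaves implicit, but it is not a different argument.
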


\begin{proof}
For any $\Gamma$-labeling $\gamma$ of the graph $G$ and any two
representations $\pi_{1}$ and $\pi_{2}$, it is clear that $\mathrm{Spec}\left(A_{\gamma,\pi_{1}\oplus\pi_{2}}\right)$
is the disjoint union (as multisets) of $\mathrm{Spec}\left(A_{\gamma,\pi_{1}}\right)$
and $\mathrm{Spec}\left(A_{\gamma,\pi_{2}}\right)$. The claim follows
as $A_{\gamma,\mathrm{triv}}=A_{G}$ for any $\Gamma$-labeling $\gamma$.
\end{proof}
In this language, Theorem \ref{thm:Every-graph-has-one-sided-d-ram-cover}
says that every graph $G$ has a one-sided Ramanujan $\left(S_{r},\mathrm{std}\right)$-covering.
This theorem will follow from Theorem \ref{thm:gamma-pi-one-sided-rmnjn-covering}
if we show that the pair $\left(S_{r},\mathrm{std}\right)$ satisfies
both $\left({\cal P}1\right)$ and $\left({\cal P}2\right)$. Before
showing this, let us recall what exterior powers of representations
are.

Let $V=\mathbb{C}^{d}$. The $m$-th exterior power of $V$, $\bigwedge^{m}V$,
is the quotient of the tensor power $\bigotimes^{m}V$ by the subspace
spanned by $\left\{ v_{1}\otimes v_{2}\otimes\ldots\otimes v_{m}\,\middle|\,v_{i}=v_{j}\,\mathrm{for\,some}\,i\ne j\right\} $.
It is a $\binom{d}{m}$-dimensional vector space. The representative
of $v_{1}\otimes\ldots\otimes v_{m}$ is denoted $v_{1}\wedge\ldots\wedge v_{m}$
and we have $v_{\sigma\left(1\right)}\wedge v_{\sigma\left(2\right)}\wedge\ldots\wedge v_{\sigma\left(m\right)}=\mathrm{sgn}\left(\sigma\right)\cdot v_{1}\wedge v_{2}\wedge\ldots\wedge v_{m}$
for any permutation $\sigma\in S_{m}$.

Now let $\pi\colon\Gamma\to\mathrm{GL}_{d}\left(\mathbb{C}\right)$
be a $d$-dimensional representation. Its $m$-th exterior power,
denoted $\bigwedge^{m}\pi$, is a $\binom{d}{m}$-dimensional representation
depicting an action of $\Gamma$ on $\bigwedge^{m}V$. This action
is given by 
\[
g.\left(v_{1}\wedge\ldots\wedge v_{m}\right)\overset{\mathrm{def}}{=}\left(g.v_{1}\right)\wedge\ldots\wedge\left(g.v_{m}\right).
\]

\begin{fact}
\label{fact:std_satisfies_P1_and_P2}For every $r\in\mathbb{Z}_{\ge2}$,
the pair $\left(S_{r},\mathrm{std}\right)$ of the symmetric group
$S_{r}$ with its standard, $\left(r-1\right)$-dimensional representation
$\mathrm{std}$ satisfies both $\left({\cal P}1\right)$ and $\left({\cal P}2\right)$.
\end{fact}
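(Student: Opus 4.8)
The plan is to verify the two properties separately, exploiting the classical description of the exterior powers of the standard representation of $S_r$. First I would recall the identification of $\bigwedge^m\mathrm{std}$ in terms of irreducible representations of $S_r$. The permutation representation $\pi$ of $S_r$ on $\mathbb{C}^r$ decomposes as $\mathrm{std}\oplus\mathrm{triv}$, so its $m$-th exterior power satisfies $\bigwedge^m(\mathrm{std}\oplus\mathrm{triv})\cong\bigwedge^m\mathrm{std}\;\oplus\;\bigwedge^{m-1}\mathrm{std}$. On the other hand, $\bigwedge^m(\mathbb{C}^r)$ is the permutation representation of $S_r$ on $m$-element subsets of $\{1,\dots,r\}$, twisted by sign on the action within each subset; a direct character computation (or the standard fact about the ``hook'' representations) shows $\bigwedge^m(\mathbb{C}^r)\cong M^{(r-m,1^m)}$ is already irreducible and is precisely the Specht module $S^{(r-m,1^m)}$ associated with the hook partition $(r-m,1,\dots,1)$, for $0\le m\le r-1$. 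Telescoping the decomposition $\bigwedge^m(\mathbb{C}^r)\cong\bigwedge^m\mathrm{std}\oplus\bigwedge^{m-1}\mathrm{std}$ and using that $\bigwedge^0\mathrm{std}=\mathrm{triv}=S^{(r)}$, one gets by induction that $\bigwedge^m\mathrm{std}\cong S^{(r-1-m,1^{m})}$ for $0\le m\le r-1$ — i.e.\ exactly the hook Specht modules $S^{(r-1-m,1^m)}$ as $m$ ranges over $0,\dots,r-1$. These are $r$ distinct irreducible representations of $S_r$ (distinct hook partitions give non-isomorphic irreducibles), which is precisely property $\left(\mathcal{P}1\right)$ for $d=r-1$.

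For $\left(\mathcal{P}2\right)$, I would argue directly that the image $\mathrm{std}(S_r)\le\mathrm{GL}_{r-1}(\mathbb{C})$ is generated by pseudo-reflections. Write $\mathbb{C}^r=\langle\mathbf{1}\rangle\oplus\langle\mathbf{1}\rangle^\perp$ and note that a transposition $(i\,j)\in S_r$ acts on $\mathbb{C}^r$ with eigenvalue $-1$ on the line $\langle e_i-e_j\rangle$ and $+1$ on the orthogonal complement of that line; since $e_i-e_j\in\langle\mathbf{1}\rangle^\perp$, the restriction $\mathrm{std}((i\,j))$ fixes the hyperplane $\langle\mathbf{1},e_i-e_j\rangle^\perp$ inside $\langle\mathbf{1}\rangle^\perp$ pointwise and acts by $-1$ on the line $\langle e_i-e_j\rangle$. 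Hence $\mathrm{rank}(\mathrm{std}((i\,j))-I)=1$ and $\mathrm{std}((i\,j))$ has finite (in fact order $2$) — so it is a genuine reflection, in particular a pseudo-reflection. Since the transpositions generate $S_r$ and $\mathrm{std}$ is a homomorphism, $\mathrm{std}(S_r)$ is generated by these pseudo-reflections, which is $\left(\mathcal{P}2\right)$.

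The only mild subtlety — and the step I would be most careful about — is the representation-theoretic input for $\left(\mathcal{P}1\right)$: correctly identifying $\bigwedge^m(\mathbb{C}^r)$ as the irreducible hook module and running the telescoping identity to peel off $\bigwedge^m\mathrm{std}$. This is entirely standard (it is in Fulton--Harris \cite{fulton1991representation}), but one must be attentive to the index bookkeeping, in particular that $m$ ranges over $0\le m\le d=r-1$ in the statement of $\left(\mathcal{P}1\right)$, matching exactly the range for which $(r-1-m,1^m)$ is a valid hook partition of $r$, and that the extreme cases $m=0$ (trivial) and $m=d$ ($\det\circ\,\mathrm{std}=\mathrm{sgn}$, the sign representation $S^{(1^r)}$) come out correctly. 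Once the decomposition $\bigwedge^m\mathrm{std}\cong S^{(r-1-m,1^m)}$ is in hand, both irreducibility and pairwise non-isomorphism are immediate from the classification of irreducibles of $S_r$ by partitions of $r$, so there is no further obstacle.
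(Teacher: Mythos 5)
Your $\left({\cal P}2\right)$ argument is correct and is essentially the paper's: transpositions generate $S_{r}$, and $\mathrm{std}$ of a transposition acts by $-1$ on the line $\left\langle e_{i}-e_{j}\right\rangle \subseteq\left\langle \mathbf{1}\right\rangle ^{\bot}$ and trivially on its complement, hence is a (pseudo-)reflection. The problem is in your $\left({\cal P}1\right)$ argument, where the key representation-theoretic identification is misstated. You claim that $\bigwedge^{m}\left(\mathbb{C}^{r}\right)$, the $m$-th exterior power of the full permutation representation, is already irreducible and equal to the hook Specht module $S^{\left(r-m,1^{m}\right)}$. This is false for $1\le m\le r-1$: since $\mathbb{C}^{r}\cong\mathrm{std}\oplus\mathrm{triv}$, your own decomposition $\bigwedge^{m}\left(\mathbb{C}^{r}\right)\cong\bigwedge^{m}\mathrm{std}\oplus\bigwedge^{m-1}\mathrm{std}$ exhibits it as a sum of two nonzero subrepresentations, and dimensions confirm this, $\binom{r}{m}=\binom{r-1}{m}+\binom{r-1}{m-1}$, whereas $\dim S^{\left(r-m,1^{m}\right)}=\binom{r-1}{m}$. (For $r=3$, $m=1$: $\bigwedge^{1}\mathbb{C}^{3}=\mathbb{C}^{3}$ is $3$-dimensional and reducible, while $S^{\left(2,1\right)}$ is $2$-dimensional.) So the two statements you telescope against each other are inconsistent, and the output of the telescoping, $\bigwedge^{m}\mathrm{std}\cong S^{\left(r-1-m,1^{m}\right)}$, is not even indexed by a partition of $r$ ($\left(r-1-m\right)+m=r-1$), hence names no irreducible representation of $S_{r}$ at all. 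As written, the argument does not establish $\left({\cal P}1\right)$.

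The fact you actually need — and the one the paper invokes directly, citing Fulton--Harris (Chapter 4, Exercise 4.6) — is that $\bigwedge^{m}\mathrm{std}$ itself is the irreducible hook representation $S^{\left(r-m,1^{m}\right)}$, for $0\le m\le r-1$; distinctness of the hook diagrams then gives pairwise non-isomorphism, exactly as you say at the end. If you want to keep your telescoping route, the correct input is the decomposition $\bigwedge^{m}\left(\mathbb{C}^{r}\right)\cong S^{\left(r-m,1^{m}\right)}\oplus S^{\left(r-m+1,1^{m-1}\right)}$ (obtained from Pieri's rule applied to $\mathrm{Ind}_{S_{m}\times S_{r-m}}^{S_{r}}\left(\mathrm{sgn}\boxtimes\mathrm{triv}\right)$, or from a character computation); comparing this with $\bigwedge^{m}\left(\mathbb{C}^{r}\right)\cong\bigwedge^{m}\mathrm{std}\oplus\bigwedge^{m-1}\mathrm{std}$ and using uniqueness of decomposition into irreducibles, induction on $m$ (with base $\bigwedge^{0}\mathrm{std}=\mathrm{triv}=S^{\left(r\right)}$) then yields $\bigwedge^{m}\mathrm{std}\cong S^{\left(r-m,1^{m}\right)}$, and $\left({\cal P}1\right)$ follows. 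But that missing decomposition is precisely the nontrivial content; without it (or a direct character/inner-product computation showing each $\bigwedge^{m}\mathrm{std}$ is irreducible), the proof has a genuine gap.
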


\begin{proof}
That the exterior powers 
\[
\bigwedge\nolimits ^{\!0}\mathrm{std}=\mathrm{triv}\,\,,\,\,\bigwedge\nolimits ^{\!1}\mathrm{std}=\mathrm{std}\,\,,\,\,\bigwedge\nolimits ^{\!2}\mathrm{std\,\,},\,\,\ldots\,\,,\,\,\bigwedge\nolimits ^{\!r-1}\mathrm{std}=\mathrm{sign}
\]
of $\mathrm{std}$ are all irreducible and non-isomorphic to each
other is a classical fact. More concretely, $\bigwedge^{m}\mathrm{std}$
is the irreducible representation corresponding to the hook-shaped
Young diagram with $m+1$ rows $\left(r-m,1,1,\ldots,1\right)$, and
distinct Young diagrams correspond to distinct irreducible representations
(see Chapter 4 and, in particular, Exercise 4.6 in \cite{fulton1991representation}).
Hence $\left(S_{r},\mathrm{std}\right)$ satisfies $\left({\cal P}1\right)$.

The symmetric group $S_{r}$ is generated by transpositions (permutations
with $r-2$ fixed points and a single $2$-cycle). The image of a
transposition under $\pi\cong\mathrm{triv}\oplus\mathrm{std}$ is
a pseudo-reflection (with spectrum $\left\{ -1,1,1,\ldots,1\right\} $).
Because the spectrum of $\mathrm{triv}\left(\sigma\right)$ is $\left\{ 1\right\} $
for any $\sigma\in S_{r}$, we get that $\mathrm{Spec}\left(\mathrm{std}\left(\sigma\right)\right)=\left\{ -1,1,\ldots,1\right\} $
(with $r-2$ ones) whenever $\sigma$ is a transposition, namely,
$\mathrm{std}\left(\sigma\right)$ is a pseudo-reflection. Thus $\left(S_{r},\mathrm{std}\right)$
satisfies $\left({\cal P}2\right)$.
\end{proof}
Claim \ref{claim:equivalence-of-coverings-and-std-coverings} and
Fact \ref{fact:std_satisfies_P1_and_P2} show, then, why Theorem \ref{thm:Every-graph-has-one-sided-d-ram-cover}
is a special case of Theorem \ref{thm:gamma-pi-one-sided-rmnjn-covering}.
Fact \ref{fact:std_satisfies_P1_and_P2} also shows that for every
$d$ there is a pair $\left(\Gamma,\pi\right)$ satisfying $\left(\P1\right)$
and $\left(\P2\right)$ with $\dim\left(\pi\right)=d$. This, together
with Theorems \ref{thm:P1} and \ref{thm:P2}, yields that $\M_{d,G}$
is real-rooted for every loopless $G$ and every $d$. Adding Corollary
\ref{cor:d-matching-poly has no non-ramanujan real zeros} we obtain
Theorem \ref{thm:M_d,G is Ramanujan}. Since Theorem \ref{thm:gamma-pi-one-sided-rmnjn-covering}
follows from Theorems \ref{thm:P1}, \ref{thm:P2} and \ref{thm:M_d,G is Ramanujan},
it remains to prove Theorems \ref{thm:P1} and \ref{thm:P2}.

In Section \ref{sec:P1} below, we prove Theorem \ref{thm:P1} and
show that whenever the pair $\left(\Gamma,\pi\right)$ satisfies $\left({\cal P}1\right)$,
the polynomial $\mathbb{E}_{\gamma}\left[\phi_{\gamma,\pi}\right]$
is equal to $\M_{d,G}$. The crux of this proof is a calculation of
$\mathbb{E}_{\gamma}\left[\phi_{\gamma,\pi}\right]=\mathbb{E}_{\gamma}\left[\det\left(xI-A_{\gamma,\pi}\right)\right]$
by minors of the $d\times d$ blocks, noticing that the determinant
of an $m$-minor of $\pi\left(g\right)$ corresponds to an entry (matrix
coefficient) of $\left(\bigwedge^{m}\pi\right)\left(g\right)$, and
using the Peter-Weyl Theorem (Theorem \ref{thm:peter-weyl} below)
for matrix coefficients.

\subsection{Interlacing Polynomials\label{subsec:Interlacing-Families-of}}

A central theme of \cite{MSS13} as well as of the current paper is
showing that certain polynomials are real rooted. The main tool used
in the proof is that of polynomials with interlacing roots or polynomials
with common interlacing. The two elementary facts below, similar in
spirit, show that in certain situations interlacement is equivalent
to real-rootedness. Proofs can be found in \cite{fisk2006polynomials}.
Following \cite{MSS15}, we use these two facts in the proof of Theorem
\ref{thm:P2} in Section \ref{sec:P2}. 
\begin{defn}
\label{def:interlacing}Let $f,g\in\mathbb{R}\left[x\right]$ be real
rooted, $n=\deg\left(f\right)$ and $\alpha_{n}\le\ldots\le\alpha_{1}$
the roots of $f$.

\begin{enumerate}
\item \label{enu:deg(f)=00003Dn deg(g)=00003Dn-1}We say that $f$ and $g$
\textbf{interlace} if $\deg\left(g\right)=n-1$ and the roots $\beta_{n-1}\le\ldots\le\beta_{1}$
of $g$ satisfy 
\[
\alpha_{n}\le\beta_{n-1}\le\alpha_{n-1}\le\ldots\le\beta_{2}\le\alpha_{2}\le\beta_{1}\le\alpha_{1}.
\]
\item \label{enu:common interlacing}We say that $f$ and $g$\textbf{ have
common interlacing} if $\deg\left(g\right)=n$, its leading coefficient
has the same sign as that of $f$, and its roots $\beta_{n}\le\ldots\le\beta_{1}$
satisfying
\[
\left\{ \alpha_{n},\beta_{n}\right\} \le\left\{ \alpha_{n-1},\beta_{n-1}\right\} \le\ldots\le\left\{ \alpha_{2},\beta_{2}\right\} \le\left\{ \alpha_{1},\beta_{1}\right\} 
\]
(i.e., $\alpha_{i+1}\le\beta_{i}$ and $\beta_{i+1}\le\alpha_{i}$
for every $i$).
\end{enumerate}
\end{defn}

The second definition can be extended to any set of polynomials: the
$i$-th root of any of them is bigger than (or equal to) the $\left(i+1\right)$-st
root of any other. 
\begin{fact}
\label{fact:interlace}Let $f,g\in\mathbb{R}\left[x\right]$ with
$\deg\left(f\right)=n$ and $\deg\left(g\right)=n-1$. The polynomials
$f$ and $g$ interlace if and only if $f+\alpha g$ is real rooted
for every $\alpha\in\mathbb{R}$. \\
Moreover, in this case, the roots change monotonically as $\alpha$
grows.
\end{fact}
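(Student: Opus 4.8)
The plan is to analyze the pencil $h_\alpha := f+\alpha g$. Since $\deg g=n-1<n=\deg f$, every $h_\alpha$ has degree exactly $n$, and its zeros that are not zeros of $g$ are precisely the $x$ with $f(x)/g(x)=-\alpha$. First I would reduce to $\gcd(f,g)=1$: if $f$ and $g$ share a root $c$, then $h_\alpha=(x-c)(f_1+\alpha g_1)$ with $f_1=f/(x-c)$, $g_1=g/(x-c)$, so real-rootedness of $h_\alpha$ is equivalent to that of $f_1+\alpha g_1$; since interlacing of a pair of polynomials is preserved both by multiplying and by dividing both by a common real linear factor, an induction on $n$ reduces the whole statement to the coprime case, where moreover the interlacing inequalities $\alpha_n\le\beta_{n-1}\le\cdots\le\beta_1\le\alpha_1$ are automatically strict (a shared value among them would be a common zero of $f$ and $g$).

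For ``interlacing $\Rightarrow$ all $h_\alpha$ real rooted'' I would use a sign count. Each zero $\beta_i$ of $g$ lies strictly between the consecutive zeros $\alpha_{i+1}<\alpha_i$ of $f$, so $\mathrm{sgn}\,f(\beta_i)$ alternates with $i$; since $h_\alpha(\beta_i)=f(\beta_i)$ for every $\alpha$, the degree-$n$ polynomial $h_\alpha$ changes sign on each of the $n-2$ intervals $(\beta_{i+1},\beta_i)$, and comparing $\mathrm{sgn}\,f(\beta_1)$, $\mathrm{sgn}\,f(\beta_{n-1})$ with the behaviour of $h_\alpha$ at $\pm\infty$ (determined only by the degree and leading coefficient of $f$) produces one more sign change in each unbounded interval. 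That is $n$ real zeros of $h_\alpha$, so $h_\alpha$ is real rooted.

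For the converse, assume $h_\alpha$ is real rooted for every $\alpha$, with $\gcd(f,g)=1$. The key leverage is that this global hypothesis rules out two local phenomena. First, $f$ has only simple zeros: a zero $\alpha_0$ of multiplicity $k\ge2$ would satisfy $g(\alpha_0)\ne0$, and near $\alpha_0$ one has $h_\alpha(x)=\tfrac{f^{(k)}(\alpha_0)}{k!}(x-\alpha_0)^k+\alpha\,g(\alpha_0)+(\text{lower order})$, whose $k$ zeros near $\alpha_0$ include at least two non-real ones for suitable arbitrarily small $\alpha\ne0$ (for $k\ge3$ always; for $k=2$ for one sign of $\alpha$) --- a contradiction. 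Second, no two of the $n$ zero-curves $x(\alpha)$ of $h_\alpha$ ever collide: a double zero $p$ of some $h_{\alpha_0}$ gives the local model $h_\alpha(x)\approx\tfrac12 h_{\alpha_0}''(p)(x-p)^2+g(p)(\alpha-\alpha_0)$ with $g(p)\ne0$ (else $p$ would be a common zero of $f$ and $g$), hence a non-real conjugate pair on one side of $\alpha_0$ --- again a contradiction. So, writing $\alpha_n<\cdots<\alpha_1$ for the simple zeros of $f$, the $n$ zeros of $h_\alpha$ form $n$ disjoint smooth curves over all of $\mathbb{R}$, with $x'(\alpha)=-g(x(\alpha))/h_\alpha'(x(\alpha))$ along each. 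Suppose some gap $(\alpha_{i+1},\alpha_i)$ of $f$ contained no zero of $g$. Since $g(\alpha_{i+1})/f'(\alpha_{i+1})\ne0$, the curve through $\alpha_{i+1}$ enters that gap as $\alpha$ leaves $0$ in one direction, is then trapped there for all such $\alpha$ (it can never equal $\alpha_{i+1}$ or $\alpha_i$ for $\alpha\ne0$, by coprimality), and is strictly monotone (no zero of $g$ inside, so $x'$ never vanishes); being bounded and monotone it tends to some $L\in(\alpha_{i+1},\alpha_i]$, whence $g(L)=\lim\bigl(-f(x(\alpha))/\alpha\bigr)=0$, impossible since $g$ has no zero in $(\alpha_{i+1},\alpha_i]$. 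Hence each of the $n-1$ gaps of $f$ carries a zero of $g$; as $\deg g=n-1$, each carries exactly one, all zeros of $g$ are real, and $f,g$ interlace.

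The monotonicity assertion is then immediate from Fact \ref{fact:moving along straight lines}: for $\alpha_0<\alpha_1$ the polynomials $h_\alpha$ with $\alpha\in[\alpha_0,\alpha_1]$ form the segment $\{(1-\lambda)h_{\alpha_0}+\lambda h_{\alpha_1}:\lambda\in[0,1]\}$, all of degree $n$ and real rooted, so every root is monotone in $\alpha$ on $[\alpha_0,\alpha_1]$; letting $\alpha_0\to-\infty$ and $\alpha_1\to+\infty$ gives monotonicity on all of $\mathbb{R}$. I expect the main obstacle to be the converse, and within it precisely the step of converting the global real-rootedness hypothesis into the local statements --- via the quadratic model above --- that forbid multiple zeros of $f$ and collisions of zero-curves; once those are in hand the trapped-monotone-curve argument closes the proof. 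The reduction to coprime $f,g$, and checking that interlacing is stable under multiplying and dividing by a common linear factor, is routine but must be carried out.
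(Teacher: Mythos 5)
The paper does not actually prove this Fact; it only points to \cite{fisk2006polynomials}, so your self-contained argument is necessarily a different route, and it is essentially correct. Your forward direction (sign alternation of $h_\alpha(\beta_i)=f(\beta_i)$ at the roots of $g$, plus the behaviour at $\pm\infty$, giving $n$ sign changes) is the standard argument and is fine once you have reduced to the coprime case, where interlacing is automatically strict; your converse replaces the classical Hermite--Kakeya/Wronskian or half-plane-mapping proofs by a dynamical one (no collisions of root curves, then a trapped monotone curve whose limit forces a zero of $g$ in every gap of $f$), and that argument is sound: the trapping, the strict monotonicity from $x'(\alpha)=-g(x(\alpha))/h_\alpha'(x(\alpha))\ne 0$, and the limit computation $g(L)=\lim(-f(x(\alpha))/\alpha)=0$ all check out, and counting one zero of $g$ per gap against $\deg g=n-1$ closes the converse; the monotonicity claim then follows correctly from Fact \ref{fact:moving along straight lines} applied to arbitrary segments of the pencil. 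Two spots should be tightened before this could stand as a written proof. First, in your ``no collision'' step the displayed local model $\tfrac12 h_{\alpha_0}''(p)(x-p)^2+g(p)(\alpha-\alpha_0)$ only covers a zero of multiplicity exactly $2$; if $p$ has multiplicity $k\ge 3$ then $h_{\alpha_0}''(p)=0$ and that model is wrong, but the general-$k$ analysis you already gave for $f=h_0$ applies verbatim to the pencil $h_{\alpha_0}+(\alpha-\alpha_0)g$, so say so (and note that this makes your Claim on simple zeros of $f$ a special case). Second, the local-model reasoning itself (``the $k$ zeros near $\alpha_0$ include non-real ones'') deserves a rigorous sentence: since $g\ne 0$ near the point, the roots of $h_\alpha$ in a small disc are exactly the solutions of $-f/g=\alpha$, a real-analytic function with a zero of order $k$ there, so for small $\alpha\ne 0$ it has at most two real solutions near the point (and none for one sign of $\alpha$ when $k$ is even, at most one when $k$ is odd), while Rouch\'e gives $k$ roots in the disc --- hence non-real roots whenever $k\ge 2$, for a suitable sign of $\alpha$. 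Finally, the reduction lemma you flag (removing or inserting a common real linear factor preserves weak interlacing) is indeed routine and true, but it is used in both directions of the induction and should be written out. Compared with the textbook proofs behind the paper's citation, your route is more elementary and gives the geometric picture of the root curves, at the cost of these local perturbation estimates.
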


More precisely, give a real-rooted $h$, let $r_{\deg\left(h\right)}\left(h\right)\le\ldots\le r_{1}\left(h\right)$
be the roots of $h$, and write $r_{0}\left(h\right)=+\infty$ and
$r_{\deg\left(h\right)+1}\left(h\right)=-\infty$. With this notation,
if the leading coefficients of $f$ and $g$ have the same sign, then
$\alpha\mapsto r_{i}\left(f+\alpha g\right)$ is monotonically decreasing
for every $1\le i\le n$, with $r_{i}\left(f+\alpha g\right)$ decreasing
from $r_{i-1}\left(g\right)$ to $r_{i}\left(g\right)$. If the leading
coefficients have opposite signs, the roots are monotonically increasing.
\begin{fact}
\label{fact:common interlacing}Let $f_{1},\ldots,f_{m}\in\mathbb{R}\left[x\right]$
have the same degree $n$. These polynomials have a common interlacing
if and only if\footnote{\label{fn:interlacing iff probability measure-1}A more general claim,
just as easy, states that the set of polynomials $\left\{ f_{\alpha}\right\} _{\alpha\in A}\subseteq\mathbb{R}\left[x\right]$
is interlacing if and only if for any probability measure on $A$,
the expected polynomial is real rooted.} the average $\lambda_{1}f_{1}+\ldots+\lambda_{m}f_{m}$ is real rooted
for every $\lambda_{1},\ldots,\lambda_{m}$ with $\lambda_{i}\ge0$
and $\sum\lambda_{i}$=1.\\
Moreover, in this case, $r_{i}\left(\lambda_{1}f_{1}+\ldots+\lambda_{m}f_{m}\right)$
lies in the convex hull of $r_{i}\left(f_{1}\right),\ldots,r_{i}\left(f_{m}\right)$
for every $1\le i\le n$.
\end{fact}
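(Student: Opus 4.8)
The plan is to prove the two implications of the biconditional separately and then read off the ``moreover'' clause. Throughout, as in Definition~\ref{def:interlacing}, I write $\alpha^{(j)}_n\le\cdots\le\alpha^{(j)}_1$ for the roots of $f_j$, and (dividing by the leading coefficients, which share a sign) assume all the $f_j$ are monic of degree $n$.

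\emph{Common interlacing $\Rightarrow$ every convex combination is real rooted.} The hypothesis says $\max_j\alpha^{(j)}_{k+1}\le\min_j\alpha^{(j)}_k$ for each $k\in\{1,\dots,n-1\}$, so there are \emph{separating points} $a_1\ge\cdots\ge a_{n-1}$ with $a_k$ in the $k$-th such interval. Then each $f_j$ has exactly $k$ roots $\ge a_k$, so $(-1)^kf_j(a_k)\ge0$, hence $(-1)^kg(a_k)\ge0$ for any convex combination $g=\sum_j\lambda_jf_j$, which is again monic of degree $n$. Since the signs of $g$ at $-\infty,a_{n-1},\dots,a_1,+\infty$ are forced to be $(-1)^n,(-1)^{n-1},\dots,(-1)^1,(-1)^0$, the Intermediate Value Theorem puts a root of $g$ in each of the $n$ resulting intervals, so $g$ is real rooted. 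I would dispose of the degeneracies (a separating point hitting a root of $g$, or two $a_k$ forced to coincide) by a routine perturbation of the $f_j$ that keeps them real rooted, makes all $mn$ roots distinct and the interlacing strict, and lets one pick the $a_k$ strictly interior and off the roots of $g$; the conclusion for the perturbed $g$ is then clean, and it passes to the limit because the set of degree-$n$ real-rooted polynomials is closed.

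\emph{Every convex combination is real rooted $\Rightarrow$ common interlacing.} Taking $\lambda_j=1$ shows each $f_j$ is real rooted. I would then reduce to $m=2$: any convex combination of a pair $f_j,f_k$ is a convex combination of the whole list, hence real rooted, and pairwise common interlacings assemble into a global one (from $\alpha^{(j)}_{k+1}\le\alpha^{(k')}_k$ for all $j\ne k'$, together with the automatic case $j=k'$, one gets $\max_j\alpha^{(j)}_{k+1}\le\min_{k'}\alpha^{(k')}_k$). So suppose $f_1,f_2$ are monic, degree $n$, real rooted, have no common interlacing, yet $g_\lambda:=\lambda f_1+(1-\lambda)f_2$ is real rooted for all $\lambda\in[0,1]$; failure of common interlacing gives, after possibly swapping, an index $i$ with $\alpha^{(2)}_{i+1}>\alpha^{(1)}_i$. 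Choose a threshold $t$ strictly between $\alpha^{(1)}_i$ and $\alpha^{(2)}_{i+1}$ and above every root of $f_1f_2$ that is $<\alpha^{(2)}_{i+1}$, so that neither $f_1$ nor $f_2$ vanishes on $[t,\alpha^{(2)}_{i+1})$; writing $N(h,t)$ for the number of roots of a monic real-rooted $h$ lying $\ge t$, one has $N(f_1,t)\le i-1<i+1\le N(f_2,t)$ and $\operatorname{sign}h(t)=(-1)^{N(h,t)}$. Since all $g_\lambda$ are real rooted, their roots move continuously in $\lambda$, so $N(g_\lambda,t)$ is locally constant away from the $\lambda$ with $g_\lambda(t)=0$; but $\lambda\mapsto g_\lambda(t)$ is affine, running from $f_2(t)\ne0$ to $f_1(t)\ne0$. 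If $f_1(t),f_2(t)$ have the same sign it never vanishes on $[0,1]$, forcing $N(g_\lambda,t)$ constant — contradicting $N(g_0,t)\ne N(g_1,t)$. As $t$ may be taken anywhere in an interval of positive length on which $f_1,f_2$ keep fixed signs, the only alternative is $f_1f_2<0$ throughout that interval; there the locus $\{(\lambda,x):g_\lambda(x)=0\}$ is the graph of the rational function $\Lambda(x)=f_2(x)/(f_2(x)-f_1(x))$, and the jump of $N$ across $\lambda=\Lambda(t)$, being of size $\ge2$, forces $x=t$ to be a root of $g_{\Lambda(t)}$ of multiplicity $\ge2$, i.e.\ $\partial_x g_\lambda(x)=0$ at $(\Lambda(t),t)$, for every such $t$; differentiating the identity $g_{\Lambda(x)}(x)\equiv0$ then gives $\Lambda'\equiv0$ on the interval, so some $g_{\lambda_0}$ vanishes on an interval and is identically zero — the final contradiction.

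\emph{The ``moreover'' clause.} Using the biconditional, write a convex combination with all $\lambda_j>0$ as $g=\lambda_1f_1+(1-\lambda_1)g'$ with $g'$ a convex combination of $f_2,\dots,f_m$. Both $g'$ and every $(1-\lambda)g'+\lambda f_1$ ($\lambda\in[0,1]$) are convex combinations of the original list, hence real rooted, so Fact~\ref{fact:moving along straight lines} makes the $i$-th root move monotonically along this segment; thus $r_i(g)$ lies between $r_i(g')$ and $r_i(f_1)$, and by induction on $m$ the point $r_i(g')$ lies in the convex hull of $r_i(f_2),\dots,r_i(f_m)$, giving the claim. I expect the one genuinely delicate point to be the ``opposite-sign'' case of the converse — ruling out that a root crosses the threshold $t$ with multiplicity $\ge2$ — where the rational-graph identity for $\Lambda$ is the crux; the forward direction and the ``moreover'' clause are bookkeeping once the perturbation is in place.
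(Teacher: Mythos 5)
Your proof is correct, and it is worth noting that the paper itself offers no argument for Fact \ref{fact:common interlacing}: it simply points to \cite{fisk2006polynomials}, so there is no in-paper proof to compare against. Your forward direction (separating points plus a perturbation to make the common interlacing strict, then closedness of real-rooted degree-$n$ polynomials under limits) is the standard one and the perturbation really is routine. The converse is the substantive part, and your route works: the reduction to $m=2$ is immediate because the paper's extension of Definition \ref{def:interlacing}\eqref{enu:common interlacing} to a set is literally the pairwise condition; the root-counting function $N(\cdot,t)$ jumps by at least $2$ across the unique $\lambda=\Lambda(t)$ with $g_\lambda(t)=0$, which forces a double root at $t$, and differentiating $g_{\Lambda(x)}(x)\equiv 0$ together with $\partial_x g_{\Lambda(x)}(x)=0$ on an interval gives $\Lambda'\equiv 0$ and hence a vanishing monic polynomial --- a clean contradiction. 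The ``moreover'' clause via Fact \ref{fact:moving along straight lines} and induction on $m$ is fine.

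One small caveat you should make explicit: your opening normalization ``dividing by the leading coefficients, which share a sign'' is justified in the forward direction (same-sign leading coefficients are part of Definition \ref{def:interlacing}\eqref{enu:common interlacing}), but in the converse direction it is an assumption, not a consequence. As stated, with mixed signs the ``if'' direction can fail on degenerate grounds (e.g.\ $f_2=-f_1$ real rooted: every convex combination is a scalar multiple of $f_1$ or the zero polynomial, yet there is no common interlacing in the sense of the definition), so the fact should be read, as is standard (and as in \cite{MSS13,MSS15}), with all $f_j$ having leading coefficients of the same sign; under that reading your proof is complete.
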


The last property is a generalization of Fact \ref{fact:moving along straight lines}.
The simple argument of the proof appears, for example, in \cite[Lemma 4.2]{MSS13}.

\medskip{}

\section{Property $\left({\cal P}1\right)$ and the Proof of Theorem \ref{thm:P1}\label{sec:P1}}

Recall that $G$ is an undirected oriented graph with $n$ vertices.
In this section we assume the pair $\left(\Gamma,\pi\right)$ satisfies
$\left({\cal P}1\right)$, namely that $\Gamma$ is finite and $\pi$
is $d$-dimensional such that its exterior powers $\bigwedge^{0}\pi,\ldots,\bigwedge^{d}\pi$
are irreducible and non-isomorphic. We need to show that $\mathbb{E}_{\gamma\in{\cal C}_{\Gamma,G}}\left[\phi_{\gamma,\pi}\right]=\M_{d,G}$.
We stress the proof is valid also in the more general case of $\Gamma$
being compact and $\pi$ unitary -- see Remark \ref{remark: compact}
and Corollary \ref{cor:O(d) and U(d)}.

For every $\Gamma$-labeling $\gamma$ of $G$, we represent the matrix
$A_{\gamma,\pi}\in M_{nd}\left(\mathbb{C}\right)$ as a sum of $\left|E\left(G\right)\right|$
matrices as follows. For every $e\in E\left(G\right)$, let $\mathbf{A_{\gamma,\pi}\left(e\right)}\in\mathrm{M}_{nd}\left(\mathbb{C}\right)$\marginpar{$A_{\gamma,\pi}\left(e\right)$}\label{notation: A_gamma,pi}
be the $nd\times nd$ matrix composed of $n^{2}$ blocks of size $d\times d$
each. All blocks are zero blocks except for the one corresponding
to $e$, the block $\left(t\left(e\right),h\left(e\right)\right)$,
in which we put $\pi\left(\gamma\left(e\right)\right)$. Clearly,
\begin{equation}
A_{\gamma,\pi}=\sum_{e\in E\left(G\right)}A_{\gamma,\pi}\left(e\right).\label{eq:A_gamma_pi as sigma over oriented edges}
\end{equation}
In order to analyze the expected characteristic polynomial of this
sum of matrices, we begin with a technical lemma, giving the determinant
of a sum of matrices as a formula in terms of the determinants of
their minors. This lemma is used in Section \ref{subsec:proof-of-thm P1}
where we complete the proof of Theorem \ref{thm:P1}.

\subsection{\label{subsec:det of sum}Determinant of Sum of Matrices}

Let $A_{1},\ldots,A_{q}\in M_{d}\left(\mathbb{C}\right)$ be $d\times d$
matrices. The determinant $\left|A_{1}+\ldots+A_{q}\right|$ can be
thought of as a double sum. First, sum $\mathrm{sgn}\left(\sigma\right)\prod_{i=1}^{d}\left(A_{1}+\ldots+A_{q}\right)_{i,\sigma\left(i\right)}$
over all permutations $\sigma\in S_{d}$. Then, for each term and
each $i\in\left[d\right]=\left\{ 1,\ldots,d\right\} $, choose $s_{\sigma}\left(i\right)\in\left[q\right]$
which marks which of the $q$ summands is taken in the entry $\left(i,\sigma\left(i\right)\right)$.
Namely,
\begin{equation}
\left|A_{1}+\ldots+A_{q}\right|=\sum_{\sigma\in S_{d}}\mathrm{sgn}\left(\sigma\right)\sum_{s_{\sigma}\colon\left[d\right]\to\left[q\right]}\prod_{i=1}^{d}\left[A_{s_{\sigma}\left(i\right)}\right]_{i,\sigma\left(i\right)}.\label{eq:double-sum-for-det-of-sum}
\end{equation}
The idea of Lemma \ref{lem:det-of-sum} below is to group the terms
in this double sum in a different fashion: first, for every $j\in\left[q\right]$,
choose from which rows $R_{j}\subseteq\left[d\right]$ and from which
columns $C_{j}\subseteq\left[d\right]$ the entry is taken from $A_{j}$.
Then, vary over all permutations $\sigma$ that respect these constraints,
namely the permutations for which $\sigma\left(R_{j}\right)=C_{j}$.
With this in mind, we define:\marginpar{$T\left(q,d\right)$}

\[
T\left(q,d\right)=\left\{ \left(\vR,\vC\right)\,\middle|\,\begin{gathered}\vR=\left(R_{1},\ldots,R_{q}\right),\,\vC=\left(C_{1},\ldots,C_{q}\right)\,\,\mathrm{are\,partitions\,of}\,\left[d\right]\,\mathrm{into}\,q\,\mathrm{parts}\\
\mathrm{such\,that}\,\left|R_{\ell}\right|=\left|C_{\ell}\right|\,\,\mathrm{for\,all}\,\,1\le\ell\le q
\end{gathered}
\right\} ,
\]
and the corresponding permutations:\marginpar{$\mathrm{Sym}\left(\vR,\vC\right)$}
\[
\mathrm{Sym}\left(\vR,\vC\right)\overset{\mathrm{def}}{=}\left\{ \sigma\in S_{d}\,\middle|\,\sigma\left(R_{\ell}\right)=C_{\ell}\,\mathrm{for\,all}\,\ell\right\} .
\]
Finally, for each such pair of partitions $(\vR,\vC)\in T\left(q,d\right)$,
we need a ``relative sign'', denoted $\mathrm{Sgn}(\vR,\vC)$\marginpar{$\mathrm{Sgn}\left(\vR,\vC\right)$},
which will enable us to calculate the sign of every $\sigma\in\mathrm{Sym}(\vR,\vC)$
based solely on the signs of the permutation-matrix $\sigma$ restricted
to the minors $\left(R_{\ell},C_{\ell}\right)$. This is the sign
of the permutation-matrix obtained by assigning, for each $\ell$,
the identity matrix $I_{\left|R_{\ell}\right|}$ to the $\left(R_{\ell},C_{\ell}\right)$
minor. For example, if $\vR=\left(\left\{ 1,3,5\right\} ,\left\{ 2,4\right\} \right)$
and $\vC=\left(\left\{ 3,4,5\right\} ,\left\{ 1,2\right\} \right)$,
then 
\[
\mathrm{sgn}\left(\vR,\vC\right)=\mathrm{sgn}\left(\begin{array}{ccccc}
0 & 0 & 1 & 0 & 0\\
1 & 0 & 0 & 0 & 0\\
0 & 0 & 0 & 1 & 0\\
0 & 1 & 0 & 0 & 0\\
0 & 0 & 0 & 0 & 1
\end{array}\right).
\]

\begin{lem}
\label{lem:det-of-sum} If $A_{1},\ldots,A_{q}\in M_{d}\left(\mathbb{C}\right)$
are $d\times d$ matrices, then 
\[
|A_{1}+\cdots+A_{q}|=\sum_{(\vR,\vC)\in T\left(q,d\right)}\mathrm{sgn}\left(\vR,\vC\right)\prod_{\ell=1}^{q}\left|A_{\ell}\right|{}_{R_{\ell},C_{\ell}},
\]
where for $R,C\subseteq\left[d\right]$ with $\left|R\right|=\left|C\right|$,
\[
\left|A\right|_{R,C}=\begin{cases}
\det\left(\left(a_{i,j}\right){}_{i\in R,j\in C}\right) & \mbox{if }\left|R\right|=\left|C\right|\ge1\\
1 & \mbox{if }R=C=\emptyset
\end{cases}
\]
marks the determinant of the $\left(R,C\right)$-minor of $A$.
\end{lem}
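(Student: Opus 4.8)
The plan is to prove the identity by expanding $\det(A_1+\cdots+A_q)$ via the Leibniz formula and then regrouping the resulting terms according to which matrix contributes each entry. Start from the double-sum expression \eqref{eq:double-sum-for-det-of-sum}:
\begin{equation}
|A_1+\cdots+A_q|=\sum_{\sigma\in S_d}\mathrm{sgn}(\sigma)\sum_{s\colon[d]\to[q]}\prod_{i=1}^d\bigl[A_{s(i)}\bigr]_{i,\sigma(i)},\nonumber
\end{equation}
where $s$ records, for each row $i$, which summand $A_{s(i)}$ supplies the entry in column $\sigma(i)$. The key observation is that a pair $(\sigma,s)$ is the same data as: (a) for each $\ell\in[q]$, the set $R_\ell=s^{-1}(\ell)$ of rows assigned to $A_\ell$, which is a partition of $[d]$; (b) for each $\ell$, the set $C_\ell=\sigma(R_\ell)$ of columns hit by those rows, again a partition of $[d]$, necessarily with $|C_\ell|=|R_\ell|$; and (c) the bijection $\sigma|_{R_\ell}\colon R_\ell\to C_\ell$ for each $\ell$. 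Thus summing over $(\sigma,s)$ is the same as first summing over $(\vR,\vC)\in T(q,d)$ and then, for each such pair, summing over $\sigma\in\mathrm{Sym}(\vR,\vC)$.

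First I would carry out this reindexing and factor the product: once $(\vR,\vC)$ is fixed, $\prod_{i=1}^d[A_{s(i)}]_{i,\sigma(i)}=\prod_{\ell=1}^q\prod_{i\in R_\ell}[A_\ell]_{i,\sigma(i)}$, and $\sigma$ varies independently on each block. Hence the inner sum factors as a product over $\ell$ of the block sums $\sum_{\tau_\ell\colon R_\ell\xrightarrow{\sim}C_\ell}(\text{sign part})\prod_{i\in R_\ell}[A_\ell]_{i,\tau_\ell(i)}$, and each of these is exactly the minor determinant $|A_\ell|_{R_\ell,C_\ell}$ up to a global sign — provided the sign bookkeeping works out. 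The main obstacle, and the only genuinely nontrivial point, is the sign: I must verify that $\mathrm{sgn}(\sigma)=\mathrm{sgn}(\vR,\vC)\cdot\prod_{\ell=1}^q\mathrm{sgn}(\tau_\ell)$, where $\mathrm{sgn}(\tau_\ell)$ denotes the sign of the bijection $R_\ell\to C_\ell$ computed with respect to the increasing orderings of $R_\ell$ and $C_\ell$ (so that it matches the sign appearing in the usual expansion of the minor $|A_\ell|_{R_\ell,C_\ell}$).

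To handle the sign, I would write the permutation matrix $P_\sigma$ as a product: $P_\sigma = P_{\mathrm{ref}}\cdot \bigoplus_\ell P_{\tau_\ell}$, where $\bigoplus_\ell P_{\tau_\ell}$ is the block-diagonal permutation matrix that acts on rows in $R_\ell$ and columns in $C_\ell$ by $\tau_\ell$ (relative to increasing orderings), and $P_{\mathrm{ref}}$ is the ``reference'' permutation matrix obtained by putting the identity $I_{|R_\ell|}$ into each $(R_\ell,C_\ell)$ minor — which is precisely the matrix whose sign defines $\mathrm{sgn}(\vR,\vC)$. Taking determinants (signs) of both sides and using multiplicativity of $\mathrm{sgn}$ gives the claimed factorization; the fact that $\det(\bigoplus_\ell P_{\tau_\ell})=\prod_\ell\mathrm{sgn}(\tau_\ell)$ is immediate from block-diagonality. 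Substituting back, $\sum_{\sigma\in\mathrm{Sym}(\vR,\vC)}\mathrm{sgn}(\sigma)\prod_{\ell}\prod_{i\in R_\ell}[A_\ell]_{i,\sigma(i)}=\mathrm{sgn}(\vR,\vC)\prod_{\ell=1}^q\Bigl(\sum_{\tau_\ell}\mathrm{sgn}(\tau_\ell)\prod_{i\in R_\ell}[A_\ell]_{i,\tau_\ell(i)}\Bigr)=\mathrm{sgn}(\vR,\vC)\prod_{\ell=1}^q|A_\ell|_{R_\ell,C_\ell}$, and summing over $(\vR,\vC)\in T(q,d)$ yields the lemma. The edge cases $R_\ell=C_\ell=\emptyset$ are absorbed by the convention $|A|_{\emptyset,\emptyset}=1$, and the case $q=1$ recovers the ordinary Leibniz formula as a sanity check.
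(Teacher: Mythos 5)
Your proposal is correct and follows essentially the same route as the paper's proof: reindex the Leibniz double sum by the pairs $(\vR,\vC)\in T(q,d)$, factor the inner sum over $\mathrm{Sym}(\vR,\vC)$ into block sums giving the minors, and use the sign factorization $\mathrm{sgn}(\sigma)=\mathrm{sgn}(\vR,\vC)\prod_{\ell}\mathrm{sgn}(\tau_\ell)$. Your permutation-matrix factorization is a reasonable way to make precise the sign identity that the paper simply declares ``easy to see.''
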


\begin{proof}
For every $\sigma\in S_{d}$ and $s_{\sigma}:\left[d\right]\to\left[q\right]$
as in \eqref{eq:double-sum-for-det-of-sum}, the pair of partitions
$(\vR,\vC)$ satisfying $R_{\ell}=s_{\sigma}^{-1}\left(\ell\right)$
and $C_{\ell}=\sigma\left(R_{\ell}\right)$, for each $\ell$, is
the unique pair in $T\left(q,d\right)$ which respects $\sigma$ and
$s_{\sigma}$, and then $\sigma\in\mathrm{Sym}(\vR,\vC)$. Therefore,
\[
\left|A_{1}+\ldots+A_{q}\right|=\sum_{\left(\vR,\vC\right)\in T\left(q,d\right)}\,\sum_{\sigma\in\mathrm{Sym}\left(\vR,\vC\right)}\mathrm{sgn}\left(\sigma\right)\sum_{\begin{gathered}s_{\sigma}:\left[d\right]\to\left[q\right]\\
s_{\sigma}^{-1}\left(\ell\right)=R_{\ell}\,\,\forall\ell
\end{gathered}
}\prod_{i=1}^{d}\left[A_{s_{\sigma}\left(i\right)}\right]_{i,\sigma\left(i\right)}.
\]
Now, specifying a permutation $\sigma\in\mathrm{Sym}(\vR,\vC)$ is
equivalent to specifying the permutation $\sigma_{\ell}$ induced
by $\sigma$ on each of the minors $\left(R_{\ell},C_{\ell}\right)$.
Thus, $\mathrm{Sym}(\vR,\vC)\cong S_{\left|R_{1}\right|}\times\ldots\times S_{\left|R_{q}\right|}$
(as sets) via $\sigma\mapsto\left(\sigma_{1},\ldots,\sigma_{q}\right)$.
It is easy to see that the signs of these permutations are related
by 
\[
\mathrm{sgn}\left(\sigma\right)=\mathrm{sgn}\left(\vR,\vC\right)\cdot\mathrm{sgn}\left(\sigma_{1}\right)\cdot\ldots\cdot\mathrm{sgn}\left(\sigma_{q}\right).
\]
Thus, if $R_{\ell}\left(i\right)$ is the $i$-th element in $R_{\ell}$,
we get
\begin{eqnarray*}
\left|A_{1}+\ldots+A_{q}\right| & = & \sum_{\left(\vR,\vC\right)\in T\left(q,d\right)}\mathrm{sgn}\left(\vR,\vC\right)\prod_{\ell=1}^{q}\left[\sum_{\sigma_{\ell}\in S_{\left|R_{\ell}\right|}}\mathrm{sgn}\left(\sigma_{\ell}\right)\prod_{i=1}^{\left|R_{\ell}\right|}\left[A_{\ell}\right]_{R_{\ell}\left(i\right),C_{\ell}\left(\sigma_{\ell}\left(i\right)\right)}\right]\\
 & = & \sum_{\left(\vR,\vC\right)\in T\left(q,d\right)}\mathrm{sgn}\left(\vR,\vC\right)\prod_{\ell=1}^{q}\left|A_{\ell}\right|{}_{R_{\ell},C_{\ell}}.
\end{eqnarray*}
\end{proof}

\subsection{\label{subsec:matrix-coefs}Matrix Coefficients}

Recall that if $\pi$ is a $d$-dimensional representation, then $\bigwedge^{m}\pi$
is $\binom{d}{m}$-dimensional, and if $\left\{ v_{1},\ldots,v_{d}\right\} $
is a basis for $\mathbb{C}^{d}$, then $\left\{ v_{i_{1}}\wedge\ldots\wedge v_{i_{m}}\,\middle|\,1\le i_{1}<i_{2}<\ldots<i_{m}\le d\right\} $
is a basis for $\bigwedge^{m}\left(\mathbb{C}^{d}\right)$ (see Section
\ref{subsec:Group-Representations}). The following standard claim
and classical theorem explain the role in Theorem \ref{thm:P1} of
the conditions on $\bigwedge^{m}\pi$, for $0\le m\le d$, as defined
in property $\left({\cal P}1\right)$:
\begin{claim}[{E.g.~\cite[Theorem~6.6.3]{kung2009combinatorics}}]
\label{claim:mat-coefs-of-exterior-powers} If the matrices $\pi\left(g\right)$
are given in terms of the basis $V=\left\{ v_{1},\ldots,v_{d}\right\} $
and $\left(\bigwedge^{m}\pi\right)\left(g\right)$ in terms of the
basis $\left\{ v_{i_{1}}\wedge\ldots\wedge v_{i_{m}}\,\middle|\,1\le i_{1}<i_{2}<\ldots<i_{m}\le d\right\} $,
then the entry (matrix coefficient) of $\left(\bigwedge^{m}\pi\right)\left(g\right)$
in row $v_{i_{1}}\wedge\ldots\wedge v_{i_{m}}$ and column $v_{j_{1}}\wedge\ldots\wedge v_{j_{m}}$
is given by the minor-determinant $ $$\left|\pi\left(g\right)\right|_{\left\{ i_{1},\ldots,i_{r}\right\} ,\left\{ j_{1},\ldots,j_{r}\right\} }$.
\end{claim}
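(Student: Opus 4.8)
The plan is to unwind both sides in the given bases and match them termwise; this is a purely multilinear-algebra computation, independent of $\Gamma$. Fix $g\in\Gamma$ and write $A=\pi(g)$ with entries $a_{ij}$ defined by $\pi(g)v_{j}=\sum_{i=1}^{d}a_{ij}v_{i}$, so that $\left|\pi(g)\right|_{R,C}=\det\bigl((a_{ij})_{i\in R,\,j\in C}\bigr)$ for $R,C\subseteq[d]$ with $\left|R\right|=\left|C\right|$. The matrix coefficient in question is, by definition, the coefficient of $v_{i_{1}}\wedge\ldots\wedge v_{i_{m}}$ in the expansion of $\bigl(\bigwedge\nolimits^{m}\pi\bigr)(g)\bigl(v_{j_{1}}\wedge\ldots\wedge v_{j_{m}}\bigr)$, and by the definition of the exterior power action this vector equals $(\pi(g)v_{j_{1}})\wedge\ldots\wedge(\pi(g)v_{j_{m}})$.

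First I would substitute $\pi(g)v_{j_{k}}=\sum_{i_{k}=1}^{d}a_{i_{k}j_{k}}v_{i_{k}}$ and expand using multilinearity of the wedge, obtaining
\[
\bigl(\bigwedge\nolimits^{m}\pi\bigr)(g)\bigl(v_{j_{1}}\wedge\ldots\wedge v_{j_{m}}\bigr)=\sum_{(i_{1},\ldots,i_{m})\in[d]^{m}}a_{i_{1}j_{1}}\cdots a_{i_{m}j_{m}}\;v_{i_{1}}\wedge\ldots\wedge v_{i_{m}}.
\]
Next I would discard the terms with a repeated index, since $v_{i_{1}}\wedge\ldots\wedge v_{i_{m}}=0$ there, and regroup the surviving (injective) tuples according to their underlying $m$-element set $S=\{s_{1}<\ldots<s_{m}\}$: such tuples correspond bijectively to $\tau\in S_{m}$ via $i_{k}=s_{\tau(k)}$, and $v_{s_{\tau(1)}}\wedge\ldots\wedge v_{s_{\tau(m)}}=\mathrm{sgn}(\tau)\,v_{s_{1}}\wedge\ldots\wedge v_{s_{m}}$. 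Hence the coefficient of $v_{s_{1}}\wedge\ldots\wedge v_{s_{m}}$ equals $\sum_{\tau\in S_{m}}\mathrm{sgn}(\tau)\,a_{s_{\tau(1)}j_{1}}\cdots a_{s_{\tau(m)}j_{m}}$, which is precisely the Leibniz expansion of $\det\bigl((a_{s_{p}j_{q}})_{p,q}\bigr)=\left|\pi(g)\right|_{S,\{j_{1},\ldots,j_{m}\}}$. Taking $S=\{i_{1},\ldots,i_{m}\}$ then yields the asserted identity.

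There is essentially no obstacle here: the computation is routine and holds for an arbitrary matrix in place of $\pi(g)$. The only place demanding a little care is the sign bookkeeping in the regrouping step, where one must check that reordering a wedge of basis vectors contributes exactly the sign of the corresponding permutation, so that the sum reassembles into a determinant. Alternatively, one may bypass the hands-on expansion entirely by invoking the functoriality $\bigwedge^{m}(BC)=\bigl(\bigwedge^{m}B\bigr)\bigl(\bigwedge^{m}C\bigr)$ together with the Cauchy--Binet formula, or simply cite \cite[Theorem~6.6.3]{kung2009combinatorics}; I would keep the short direct argument above for self-containedness.
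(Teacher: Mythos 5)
Your argument is correct: expanding $(\bigwedge^{m}\pi)(g)(v_{j_{1}}\wedge\ldots\wedge v_{j_{m}})$ by multilinearity, discarding wedges with repeated factors, and regrouping the injective index tuples by their underlying $m$-set with the sign $\mathrm{sgn}(\tau)$ reassembles precisely the Leibniz expansion of the minor $\left|\pi(g)\right|_{\{i_{1},\ldots,i_{m}\},\{j_{1},\ldots,j_{m}\}}$ (the $r$'s in the claim's subscripts are just a typo for $m$). The paper offers no proof of this claim, citing it as standard, and your computation is exactly the standard argument behind that citation, so it fills the omission in essentially the expected way.
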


\begin{thm}[{Peter-Weyl, see e.g.~\cite[Chapter~2]{bump2004}}]
\label{thm:peter-weyl} The matrix coefficients of the irreducible
representations of a finite (compact) group $\Gamma$ are an orthogonal
basis of $L^{2}\left(\Gamma\right)$. More precisely, if $\pi_{1}\colon\Gamma\to U\left(d_{1}\right)$
and $\pi_{2}\colon\Gamma\to U\left(d_{2}\right)$ are irreducible
non-isomorphic unitary representations of $\Gamma$, then 
\[
\mathbb{E}_{g\in\Gamma}\left[\pi_{1}\left(g\right)_{i_{1},j_{1}}\cdot\overline{\pi_{2}\left(g\right)_{i_{2},j_{2}}}\right]=0
\]
for every $i_{1},j_{1}\in\left[d_{1}\right]$ and $i_{2},j_{2}\in\left[d_{2}\right]$,
the expectation taken according to the uniform (Haar, respectively)
measure of $\Gamma$. Moreover, if $\pi\colon\Gamma\to U\left(d\right)$
is an irreducible representation, then 
\[
\mathbb{E}_{g\in\Gamma}\left[\pi\left(g\right)_{i_{1},j_{1}}\cdot\overline{\pi\left(g\right)_{i_{2},j_{2}}}\right]=\begin{cases}
\frac{1}{d} & \left(i_{1},j_{1}\right)=\left(i_{2},j_{2}\right)\\
0 & otherwise
\end{cases}.
\]
\end{thm}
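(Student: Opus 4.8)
The plan is to establish the quantitative orthogonality relations (the Schur orthogonality relations) directly, and then address the completeness statement that the matrix coefficients span $L^{2}\left(\Gamma\right)$. Throughout we may assume, after conjugating, that every representation under consideration is unitary; this changes only the choice of basis (hence does not affect the span of the matrix coefficients), and for the orthogonality relations it lets us replace $\pi\left(g\right)^{-1}$ by the conjugate--transpose $\pi\left(g\right)^{*}$, so that $\bigl(\pi\left(g\right)^{-1}\bigr)_{k,\ell}=\overline{\pi\left(g\right)_{\ell,k}}$. The only nontrivial representation--theoretic input for the first part is Schur's lemma: over $\mathbb{C}$, an intertwiner between two irreducible representations is either $0$ or invertible, and a self-intertwiner of an irreducible representation is a scalar (take an eigenvalue).

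The central device is an averaging trick. Given unitary representations $\pi_{1}\colon\Gamma\to U\left(d_{1}\right)$ and $\pi_{2}\colon\Gamma\to U\left(d_{2}\right)$ and an arbitrary linear map $T\colon\mathbb{C}^{d_{2}}\to\mathbb{C}^{d_{1}}$, set
\[
\widetilde{T}=\mathbb{E}_{g\in\Gamma}\bigl[\pi_{1}\left(g\right)\,T\,\pi_{2}\left(g\right)^{-1}\bigr].
\]
Since translation by a fixed $h\in\Gamma$ is measure preserving, one checks $\pi_{1}\left(h\right)\widetilde{T}=\widetilde{T}\,\pi_{2}\left(h\right)$ for all $h$, so $\widetilde{T}$ is an intertwiner. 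If $\pi_{1}\not\cong\pi_{2}$ are irreducible, Schur's lemma forces $\widetilde{T}=0$. If $\pi_{1}=\pi_{2}=\pi$ is irreducible on $\mathbb{C}^{d}$, Schur's lemma forces $\widetilde{T}=\lambda\left(T\right)I$, and taking traces, together with $\mathrm{tr}\bigl(\pi\left(g\right)T\pi\left(g\right)^{-1}\bigr)=\mathrm{tr}\left(T\right)$, gives $\lambda\left(T\right)=\mathrm{tr}\left(T\right)/d$.

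To extract matrix coefficients, specialize $T$ to the matrix unit $E_{j_{1},j_{2}}$ (a $1$ in position $\left(j_{1},j_{2}\right)$, zeros elsewhere) and read off the $\left(i_{1},i_{2}\right)$ entry of $\widetilde{T}$:
\[
\mathbb{E}_{g\in\Gamma}\Bigl[\pi_{1}\left(g\right)_{i_{1},j_{1}}\bigl(\pi_{2}\left(g\right)^{-1}\bigr)_{j_{2},i_{2}}\Bigr]=\widetilde{T}_{i_{1},i_{2}}.
\]
Using $\bigl(\pi_{2}\left(g\right)^{-1}\bigr)_{j_{2},i_{2}}=\overline{\pi_{2}\left(g\right)_{i_{2},j_{2}}}$, the left-hand side becomes exactly $\mathbb{E}_{g}\bigl[\pi_{1}\left(g\right)_{i_{1},j_{1}}\overline{\pi_{2}\left(g\right)_{i_{2},j_{2}}}\bigr]$. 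In the non-isomorphic case $\widetilde{T}_{i_{1},i_{2}}=0$; in the case $\pi_{1}=\pi_{2}=\pi$, we get $\widetilde{T}_{i_{1},i_{2}}=\tfrac{\mathrm{tr}\left(E_{j_{1},j_{2}}\right)}{d}\,\delta_{i_{1},i_{2}}=\tfrac{1}{d}\,\delta_{j_{1},j_{2}}\delta_{i_{1},i_{2}}$, which is $\tfrac1d$ precisely when $\left(i_{1},j_{1}\right)=\left(i_{2},j_{2}\right)$ and $0$ otherwise. This proves all the displayed identities, and in particular that matrix coefficients of inequivalent irreducibles are orthogonal in $L^{2}\left(\Gamma\right)$, while within a single irreducible of dimension $d$ the $d^{2}$ coefficients are pairwise orthogonal of squared norm $1/d$.

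Finally, the span assertion. For a finite group this is a dimension count: decompose the regular representation ${\cal R}$ of $\Gamma$ into irreducibles and observe (e.g.\ via the isomorphism $\mathbb{C}\left[\Gamma\right]\cong\bigoplus_{\pi}\mathrm{End}\left(V_{\pi}\right)$, or by computing $\langle\chi_{{\cal R}},\chi_{\pi}\rangle=\dim V_{\pi}$) that each irreducible $\pi$ occurs with multiplicity $\dim V_{\pi}$, so $\sum_{\pi}\left(\dim V_{\pi}\right)^{2}=\left|\Gamma\right|=\dim L^{2}\left(\Gamma\right)$; since the coefficient functions of distinct irreducibles are already linearly independent by the orthogonality relations, they must span. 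For a general compact $\Gamma$ this completeness is the genuinely hard part --- the heart of the Peter--Weyl theorem --- proved by applying the spectral theorem to the compact self-adjoint convolution operator $f\mapsto k*f$ associated with a symmetric $k\in L^{2}\left(\Gamma\right)$, showing its (finite-dimensional, nonzero) eigenspaces are $\Gamma$-invariant and hence consist of matrix coefficients, and then letting $k$ vary to exhaust $L^{2}\left(\Gamma\right)$; for this we cite \cite[Chapter~2]{bump2004}. The main obstacle is thus confined to the compact case: once Schur's lemma is available the finite case is elementary, but closing the span in $L^{2}$ for infinite compact $\Gamma$ requires this functional-analytic input, which lies outside what we actually use in the rest of the paper (where $\Gamma$ is finite or the needed statement is only Theorem~\ref{thm:P1}).
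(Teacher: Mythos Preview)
Your argument is correct and is the standard proof of the Schur orthogonality relations via the averaging (projection) operator $\widetilde{T}=\mathbb{E}_{g}\bigl[\pi_{1}(g)T\pi_{2}(g)^{-1}\bigr]$ together with Schur's lemma, followed by the usual dimension count $\sum_{\pi}(\dim V_{\pi})^{2}=|\Gamma|$ for completeness in the finite case. Note, however, that the paper does not supply its own proof of this theorem: it is stated as a classical result with a reference to \cite[Chapter~2]{bump2004}, and only the displayed orthogonality identities are actually invoked later (in Section~\ref{sub:proof-of-thm P1}). So there is nothing in the paper to compare your approach against; you have simply filled in what the paper chose to cite, and your treatment is exactly the textbook one found in the cited reference.
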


We now have all the tools needed to prove Theorem \ref{thm:P1}.

\subsection{\label{subsec:proof-of-thm P1}Proof of Theorem~\ref{thm:P1}}

Assume without loss of generality that $\pi\colon\Gamma\to U\left(d\right)$
maps the elements of $\Gamma$ to unitary matrices, so that for every
$e\in E\left(G\right)$, $A_{\gamma,\pi}\left(-e\right)$ is the conjugate-transpose
matrix $A_{\gamma,\pi}\left(e\right)^{*}$.

Recall \eqref{eq:A_gamma_pi as sigma over oriented edges}. We analyze
the expected characteristic polynomial 
\begin{equation}
\mathbb{E}_{\gamma\in{\cal C}_{\Gamma,G}}\left[\phi_{\gamma,\pi}\right]=\mathbb{E}_{\gamma\in{\cal C}_{\Gamma,G}}\left[\det\left(xI-A_{\gamma,\pi}\right)\right]=\mathbb{E}_{\gamma\in{\cal C}_{\Gamma,G}}\left[\det\left(xI-\sum_{e\in E\left(G\right)}A_{\gamma,\pi}\left(e\right)\right)\right].\label{eq:expected-char-poly}
\end{equation}
Our goal is to show it is equal to the formula given for $\M_{d,G}$
in Proposition \ref{prop:formula-for-M_d,G}. We use Lemma \ref{lem:det-of-sum}
to rewrite the determinant in the right hand side of \eqref{eq:expected-char-poly}.
We now let 
\[
T=T\left(1+\left|E\left(G\right)\right|,nd\right)=\left\{ \left(\vR,\vC\right)\,\middle|\,\begin{gathered}\vR\,\,\mathrm{and}\,\,\vC\,\,\mathrm{are\,partitions\,of}\,\left[nd\right]\,\,\mathrm{to}\,1+\left|E\left(G\right)\right|\,\mathrm{parts}\,\\
\mathrm{indexed\,by\:}\left\{ x\right\} \cup E\left(G\right),\\
\mathrm{with}\,R_{x}=C_{x}\,\mathrm{and}\,\left|R_{e}\right|=\left|C_{e}\right|\mathrm{for\,all}\,e\in E\left(G\right)
\end{gathered}
\right\} .
\]
By Lemma \ref{lem:det-of-sum}, 
\[
\phi_{\gamma,\pi}=\sum_{(\vR,\vC)\in T}\mathrm{sgn}\left(\vR,\vC\right)\cdot x^{\left|R_{x}\right|}\prod_{e\in E\left(G\right)}(-1)^{|R_{e}|}|A_{\gamma,\pi}\left(e\right)|_{R_{e},C_{e}}.
\]
Taking expected values gives
\begin{multline}
\mathbb{E}_{\gamma}\left[\phi_{\gamma,\pi}\right]=\sum_{(\vR,\vC)\in T}\mathrm{sgn}\left(\vR,\vC\right)\cdot x^{\left|R_{x}\right|}\mathbb{E}_{\gamma}\left[\prod_{e\in E\left(G\right)}(-1)^{|R_{e}|}|A_{\gamma,\pi}\left(e\right)|_{R_{e},C_{e}}\right]\\
=\sum_{(\vR,\vC)\in T}\mathrm{sgn}\left(\vR,\vC\right)\cdot x^{\left|R_{x}\right|}\left(-1\right)^{nd-\left|R_{x}\right|}\prod_{e\in E^{+}\left(G\right)}\mathbb{E}_{\gamma}\left[|A_{\gamma,\pi}\left(e\right)|_{R_{e},C_{e}}\cdot|A_{\gamma,\pi}\left(-e\right)|_{R_{-e},C_{-e}}\right],\label{eq:product_of_exp}
\end{multline}
since the $A_{\gamma,\pi}\left(e\right)$ are independent except for
the pairs $A_{\gamma,\pi}\left(e\right)$ and $A_{\gamma,\pi}\left(-e\right)$.

Since $A_{\gamma,\pi}\left(-e\right)=A_{\gamma,\pi}\left(e\right)^{*}$,
the term inside the expectation in the right hand side of \eqref{eq:product_of_exp}
is equal to 
\[
\mathbb{E}_{\gamma}\left[|A_{\gamma,\pi}\left(e\right)|_{R_{e},C_{e}}\cdot|A_{\gamma,\pi}\left(e\right)^{*}|_{R_{-e},C_{-e}}\right]=\mathbb{E}_{\gamma}\left[|A_{\gamma,\pi}\left(e\right)|_{R_{e},C_{e}}\cdot\overline{|A_{\gamma,\pi}\left(e\right)|_{C_{-e},R_{-e}}}\right].
\]
Clearly, this term is zero, unless the minors we choose for $e$ and
$-e$ are inside the $d\times d$ blocks corresponding to $e$ and
$-e$, respectively. That is, if $B_{v}$\marginpar{$B_{v}$} denotes
the set of $d$ indices of rows and columns corresponding to the vertex
$v\in V\left(G\right)$, then this term is zero unless $R_{e},C_{-e}\subseteq B_{t\left(e\right)}$
and $C_{e},R_{-e}\subseteq B_{h\left(e\right)}$ . If this is the
case, we can think of $R_{e},C_{e},R_{-e},C_{-e}$ as subsets of $\left[d\right]$,
so Claim \ref{claim:mat-coefs-of-exterior-powers} yields this term
is 
\[
\mathbb{E}_{\gamma}\left[\left(\left(\bigwedge\nolimits ^{\!\left|R_{e}\right|}\pi\right)\left(\gamma\left(e\right)\right)\right)_{R_{e},C_{e}}\cdot\overline{\left(\left(\bigwedge\nolimits ^{\!\left|R_{-e}\right|}\pi\right)\left(\gamma\left(e\right)\right)\right)_{C_{-e},R_{-e}}}\right],
\]
where we identify an $m$-subset of $\left[d\right]$ with a basis
element of $\bigwedge^{m}\left(\mathbb{C}^{d}\right)$ in the obvious
way. Finally, by the Peter-Weyl Theorem (Theorem \ref{thm:peter-weyl})
and our assumptions on the exterior powers $\bigwedge^{m}\pi$ for
$0\le m\le d$, this expectation is zero unless $\left|R_{e}\right|=\left|R_{-e}\right|$,
$R_{e}=C_{-e}$, and $C_{e}=R_{-e}$. If all these equalities hold,
the expectation is $\binom{d}{\left|R_{e}\right|}^{-1}$. 

Define $T^{\mathrm{sym}}\subseteq T$ to be the subset of $T$ containing
the partitions for which the expectation in \eqref{eq:product_of_exp}
is not zero. Namely, 
\[
T^{\mathrm{sym}}=\left\{ \left(\vR,\vC\right)\,\middle|\,\begin{gathered}\vR\,\,\mathrm{and}\,\,\vC\,\,\mathrm{are\,partitions\,of}\,\left[nd\right]\,\,\mathrm{to}\,\left|E\left(G\right)\right|+1\,\mathrm{parts}\,\\
\mathrm{indexed\,by\:}\left\{ x\right\} \cup E\left(G\right),\\
\mathrm{with}\,R_{x}=C_{x},\,\mathrm{and}\,\mathrm{for\,all}\,e\in E^{+}\left(G\right)\\
\left|R_{e}\right|=\left|C_{e}\right|,\,C_{-e}=R_{e},\,R_{-e}=C_{e},\,R_{e}\subseteq B_{t\left(e\right)}\,\mathrm{and}\,R_{-e}\subseteq B_{h\left(e\right)}
\end{gathered}
\right\} .
\]
Our discussion shows that
\[
\mathbb{E}_{\gamma}\left[\phi_{\gamma,\pi}\right]=\sum_{\left(\vR,\vC\right)\in T^{\mathrm{sym}}}\mathrm{sgn}(\vR,\vC)\cdot x^{\left|R_{x}\right|}\left(-1\right)^{nd-\left|R_{x}\right|}\prod_{e\in E^{+}\left(G\right)}\frac{1}{\binom{d}{\left|R_{e}\right|}}.
\]
Now, notice that because $\left|R_{-e}\right|=\left|R_{e}\right|$,
we get that $nd-\left|R_{x}\right|=\sum_{e\in E\left(G\right)}\left|R_{e}\right|$
is even, so $\left(-1\right)^{nd-\left|R_{x}\right|}=1$ for every
$(\vR,\vC)\in T^{\mathrm{sym}}$. Because of the conditions $C_{-e}=R_{e}$
and $R_{-e}=C_{e}$ on the partitions in $T^{\mathrm{sym}}$, the
permutation matrix defining $\mathrm{sgn}(\vR,\vC)$ is symmetric.
Thus, the corresponding permutation is an involution, with exactly
$\left|R_{x}\right|$ fixed points and $\frac{nd-\left|R_{x}\right|}{2}$
$2$-cycles\footnote{In particular, if $\left(\vR,\vC\right)\in T^{\mathrm{sym}}$ then
$R_{e}\cap C_{e}=\emptyset$, even for loops.}, so $\mathrm{sgn}(\vR,\vC)=\left(-1\right)^{\left(nd-\left|R_{x}\right|\right)/2}$.
Hence,
\[
\mathbb{E}_{\gamma}\left[\phi_{\gamma,\pi}\right]=\sum_{\left(\vR,\vC\right)\in T^{\mathrm{sym}}}\left(-1\right)^{\left(nd-\left|R_{x}\right|\right)/2}\cdot x^{\left|R_{x}\right|}\prod_{e\in E^{+}\left(G\right)}\frac{1}{\binom{d}{\left|R_{e}\right|}}.
\]

Recall the definition of a $d$-multi-matching given in Definition
\ref{def: d-multi-matching}: it is a function $m\colon E\left(G\right)\to\mathbb{Z}_{\ge0}$
such that $m\left(-e\right)=m\left(e\right)$ for every $e\in E\left(G\right)$
and $m\left(v\right)\le d$ for every $v\in V\left(G\right)$, where
$m\left(v\right)$ is the sum of $m$ on all oriented edges emanating
from $v$. For every $(\vR,\vC)\in T^{\mathrm{sym}}$, consider the
map $\eta(\vR,\vC)\colon E\left(G\right)\to\mathbb{Z}_{\ge0}$ given
by $e\mapsto|R_{e}|$. We claim this is a $d$-multi-matching. Indeed,
for every $v\in V\left(G\right)$,
\[
\eta\left(\vR,\vC\right)\left(v\right)=\sum_{e\in E\left(G\right):\,t\left(e\right)=v}\left|R_{e}\right|\le d
\]
as $\vR$ is a partition and $R_{e}\subseteq B_{t\left(e\right)}$
when $t\left(e\right)=v$.

Finally, for every $(\vR,\vC)\in T^{\mathrm{sym}}$, $\vC$ is completely
determined by $\vR$. Denote by $e_{v,1},\ldots,e_{v,\deg\left(v\right)}$
the oriented edges emanating from $v$. Then, for every $d$-multi-matching
$m$, the number of partitions $(\vR,\vC)\in T^{\mathrm{sym}}$ associated
to $m$ is exactly 
\[
\prod_{v\in V\left(G\right)}\binom{d}{m\left(e_{v,1}\right),\ldots,m\left(e_{v,\deg\left(v\right)}\right)}.
\]
We obtain
\begin{eqnarray*}
\mathbb{E}_{\gamma}\left[\phi_{\gamma,\pi}\right] & = & \sum_{m\in\multi_{d}\left(G\right)}\,\sum_{(\vR,\vC)\in T^{\mathrm{sym}}\,:\,\eta(\vR,\vC)=m}\left(-1\right)^{\left(nd-\left|R_{x}\right|\right)/2}\cdot x^{\left|R_{x}\right|}\prod_{e\in E^{+}\left(G\right)}\frac{1}{\binom{d}{\left|R_{e}\right|}}\\
 & = & \sum_{m\in\multi_{d}\left(G\right)}\left(-1\right)^{\left|m\right|}x^{nd-2\left|m\right|}\frac{{\displaystyle \prod_{v\in V\left(G\right)}\binom{d}{m\left(e_{v,1}\right),\ldots,m\left(e_{v,\deg\left(v\right)}\right)}}}{{\displaystyle \prod_{e\in E^{+}\left(G\right)}\binom{d}{m\left(e\right)}}},
\end{eqnarray*}
where the summation is over all $d$-multi-matchings $m$ of $G$,
and $\left|m\right|=\sum_{e\in E^{+}\left(G\right)}m\left(e\right)$.
This is precisely the formula for $\M_{d,G}$ from Proposition \ref{prop:formula-for-M_d,G},
so the proof of Theorem \ref{thm:P1} is complete. \qed 
\begin{rem}
To give further intuition for Theorem \ref{thm:P1}, we remark that
its statement for one particular family of pairs $\left(\Gamma,\pi\right)$
satisfying $\left({\cal P}1\right)$ follows readily from well known
results. This is the family of signed permutation groups: for every
$d$, let $\pi\left(\Gamma\right)$ be the subgroup of $\mathrm{GL}_{d}\left(\mathbb{C}\right)$
consisting of matrices with entries in $\left\{ 0,\pm1\right\} $
and with exactly one non-zero entry in every row and in every column.
That these pairs satisfy $\left({\cal P}1\right)$ (and also $\left({\cal P}2\right)$)
is well known - see Section \ref{sec:applications}. But here, every
$\left(\Gamma,\pi\right)$-covering of a graph $G$ corresponds to
a $d$-covering $H$ plus a signing (a $\left(\mathbb{Z}/2\mathbb{Z},\mathrm{sign}\right)$-covering)
of $H$. The special case of Theorem \ref{thm:P1} for $\left(\mathbb{Z}/2\mathbb{Z},\mathrm{sign}\right)$-coverings,
known at least since \cite{GG81}, shows that the expected characteristic
polynomial over all signings of $H$ equals $\M_{1,H}$, hence the
average of all signings over all possible $d$-coverings $H$ is $\M_{d,G}$,
by definition. This gives a shorter route to proving Theorem \ref{thm:M_d,G is Ramanujan}.
\end{rem}

\section{Property $\left({\cal P}2\right)$ and the Proof of Theorem \ref{thm:P2}\label{sec:P2}}

The main goal of this Section is to show that the expected characteristic
polynomial of a random $\left(\Gamma,\pi\right)$-covering is real
rooted for certain distributions on coverings, and in particular that
when $\left(\Gamma,\pi\right)$ satisfies $\left(\P2\right)$, this
is true for the uniform distribution. The main component of the proof
is Theorem \ref{thm:MSS15}, showing that for certain distributions
of Hermitian ($A^{*}=A$) matrices, the expected characteristic polynomial
is real rooted. This theorem imitates and generalizes the argument
of \cite[Theorem 3.3]{MSS15}. We repeat the argument in Section \ref{subsec:MSS15},
because we need a more general statement, but we refer the interested
reader to \cite[Section 3]{MSS15} for some more elaborated concepts
and notions. Theorem \ref{thm:MSS15} is a generalization of the fact
that the characteristic polynomials\footnote{Recall that for any matrix $A$, we denote its characteristic polynomial
by $\phi\left(A\right)=\det\left(xI-A\right)$.} $\phi\left(A\right)$ and $\phi\left(BAB^{*}\right)$ interlace whenever
$A\in\mathrm{M}_{d}\left(\mathbb{C}\right)$ is Hermitian and $B\in U\left(d\right)$
satisfies $\mathrm{rank}\left(B-I_{d}\right)=1$. 

\subsection{Average Characteristic Polynomial of Sum of Random Matrices\label{subsec:MSS15}}
\begin{defn}
\label{def:rank-1-variable}We say that the random variable $W$ taking
values in $U\left(d\right)$ is\textbf{ }a\textbf{ rank-1 random variable}
if every two different possible values $B_{1}$ and $B_{2}$ satisfy
$\mathrm{rank}\left(B_{1}B_{2}^{-1}-I_{d}\right)=1$.
\end{defn}

It is not hard to see that $W$ is a rank-1 random variable if and
only if it takes values in some $P\Lambda Q$ where $P,Q\in U\left(d\right)$
and \marginpar{$\Lambda$}$\Lambda\le U\left(d\right)$ is the subgroup
of diagonal matrices 
\[
\left\{ \left(\begin{array}{cccc}
\lambda\\
 & 1\\
 &  & \ddots\\
 &  &  & 1
\end{array}\right)\in U\left(d\right)\,\middle|\,\left|\lambda\right|=1\right\} .
\]

\begin{thm}
\label{thm:MSS15}Let $m\in\mathbb{Z}_{\ge1}$, let $\ell\left(1\right),\ldots,\ell\left(m\right)\in\mathbb{Z}_{\ge0}$,
and let ${\cal W}=\left\{ W_{i,j}\right\} _{1\le i\le m,1\le j\le\ell\left(i\right)}$
be a set of independent rank-1 random variables taking values in $U\left(d\right)$.
If $A_{1},\ldots,A_{m}\in\mathrm{M}_{d}\left(\mathbb{C}\right)$ are
Hermitian matrices, then the expected characteristic polynomial
\[
P_{{\cal W}}\left(A_{1},\ldots,A_{m}\right)\overset{\mathrm{def}}{=}\mathbb{E}_{{\cal W}}\left[\phi\left(\sum_{i=1}^{m}W_{i,1}\ldots W_{i,\ell\left(i\right)}A_{i}W_{i,\ell\left(i\right)}^{*}\ldots W_{i,1}^{*}\right)\right]
\]
is real rooted.
\end{thm}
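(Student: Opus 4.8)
The plan is to follow the interlacing‑polynomial strategy of \cite{MSS13,MSS15}: reduce the whole statement to a single‑variable lemma about conjugating a fixed Hermitian matrix by a rank‑1 random unitary, and then propagate that lemma through the full expectation by an interlacing‑family argument.

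\textbf{The single‑variable lemma.} The key step is to prove: for Hermitian $M,A\in M_{d}(\mathbb C)$ and a rank‑1 random variable $W$ in $U(d)$, the polynomials $\bigl\{\phi(M+BAB^{*})\bigr\}_{B}$, $B$ ranging over the support of $W$, have a common interlacing; in particular $\mathbb E_{W}[\phi(M+WAW^{*})]$ is real rooted. By the remark following Definition \ref{def:rank-1-variable} the support of $W$ lies in $P\Lambda Q$ for fixed $P,Q\in U(d)$, so after conjugating by $P$ and replacing $A$ by $QAQ^{*}$ we may assume $B=\Lambda_{\mu}:=\mathrm{diag}(\mu,1,\dots,1)$ with $|\mu|=1$. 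Writing $\Lambda_{\mu}A\Lambda_{\mu}^{*}=A_{0}+\bar\mu\,\widehat b\,e_{1}^{*}+\mu\,e_{1}\widehat b^{*}$, where $A_{0}$ is the block‑diagonal part of $A$ and $\widehat b\in\mathbb C^{d}$ is its first column with the first entry set to $0$, an application of the matrix–determinant lemma (or a Schur complement) yields an identity of the shape
\[
\phi\bigl(M+\Lambda_{\mu}A\Lambda_{\mu}^{*}\bigr)=\phi_{0}(x)-2\,\mathrm{Re}\bigl(\mu\,\rho(x)\bigr),
\]
where $\phi_{0}\in\mathbb R[x]$ is monic of degree $d$ and $\rho\in\mathbb C[x]$, and neither depends on $\mu$. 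Thus the family lies in a two‑real‑parameter affine family indexed by the phase, and any convex combination of its members is again such a polynomial $\phi_{0}-2\,\mathrm{Re}(\nu\rho)$ with $\nu$ in the closed unit disk. Since every point of the disk is the midpoint of a chord of the circle, such a convex combination equals $\tfrac12\bigl(\phi(M+\Lambda_{\mu_{1}}A\Lambda_{\mu_{1}}^{*})+\phi(M+\Lambda_{\mu_{2}}A\Lambda_{\mu_{2}}^{*})\bigr)$ with $|\mu_{1}|=|\mu_{2}|=1$, so by Fact \ref{fact:common interlacing} it suffices to show that any two members with unit‑modulus phases have a common interlacing. Conjugating by $\Lambda_{\mu_{1}}^{*}$ reduces this to the assertion that $\phi(N+A')$ and $\phi(N+\Lambda_{\xi}A'\Lambda_{\xi}^{*})$ have a common interlacing for Hermitian $N$ and $|\xi|=1$; and by the same block computation $\Lambda_{\xi}A'\Lambda_{\xi}^{*}-A'=vw^{*}+wv^{*}$ for suitable orthogonal $v,w$ ($w$ a scalar multiple of $e_{1}$, $v$ a scalar multiple of $\widehat b'$). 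The proof finishes with the classical fact that for Hermitian $N$ and $v\perp w$ the polynomials $\phi(N)$ and $\phi(N+vw^{*}+wv^{*})$ have a common interlacing: writing $vw^{*}+wv^{*}=s(pp^{*}-qq^{*})$ with $s\ge0$ and $p\perp q$, the eigenvalue‑interlacing bounds for a rank‑one subtraction followed by a rank‑one addition give $\lambda_{k+1}(N)\le\lambda_{k}(N+vw^{*}+wv^{*})\le\lambda_{k-1}(N)$, which is precisely the common‑interlacing condition.

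\textbf{Propagating through the expectation.} With the lemma in hand, I would realize $P_{\mathcal W}(A_{1},\dots,A_{m})$ as the root of an interlacing family in the sense of \cite{MSS13,MSS15}. Flatten $\mathcal W$ into a list $W_{1},\dots,W_{N}$, keeping the order of the $W_{i,j}$ inside each product $W_{i,1}\cdots W_{i,\ell(i)}$, and build the tree that branches successively on $W_{1},\dots,W_{N}$; to the node given by a partial assignment of $W_{1},\dots,W_{k}$ attach the conditional expectation of $\phi\bigl(\sum_{i}W_{i,1}\cdots W_{i,\ell(i)}A_{i}W_{i,\ell(i)}^{*}\cdots W_{i,1}^{*}\bigr)$ given that assignment. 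Leaves are characteristic polynomials of Hermitian matrices, hence real rooted with positive leading coefficient. For the interlacing‑family conclusion one must check that, at each internal node, the children have a common interlacing. Conditioning on the variables above the node and conjugating by a fixed unitary, the relevant matrix becomes $M^{\sharp}+W_{k+1}\widetilde A\,W_{k+1}^{*}$ with $M^{\sharp},\widetilde A$ Hermitian, depending only on the variables below the node and independent of $W_{k+1}$; by the identity above, the child polynomials again form a phase‑indexed affine family $\Phi_{0}-2\,\mathrm{Re}(\mu_{b}\,\mathrm P)$, and the verification of common interlacing reduces, via the chord‑midpoint trick and the single‑variable lemma applied inside the conditional expectation, to real‑rootedness of expected characteristic polynomials built from strictly fewer random variables — an instance of the inductive hypothesis. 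The interlacing‑family theorem of \cite{MSS13} then gives that the root polynomial $P_{\mathcal W}$ is real rooted (and, as is needed for Theorem \ref{thm:P2}, that some leaf — i.e.\ some deterministic choice of all the $W_{i,j}$ — has largest root at most that of $P_{\mathcal W}$).

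\textbf{Main obstacle.} The heart of the matter is the single‑variable lemma: the map $A\mapsto WAW^{*}$ is generically a rank‑\emph{two} perturbation of $M$, so the textbook rank‑one Cauchy interlacing does not apply directly. Two observations rescue it — the matrix–determinant‑lemma identity exhibiting $\phi(M+WAW^{*})$ as an affine function of the phase of $W$ inside a codimension‑two space of polynomials, and the fact that a (pseudo‑)reflection changes $A$ precisely by a term $vw^{*}+wv^{*}$ with $v\perp w$, for which $\phi$ does obey a common‑interlacing estimate. The remaining difficulty is bookkeeping — keeping the conditional expectations that occur in the tree within the class to which the induction applies — and this is exactly the place where the interlacing‑family formalism of \cite{MSS15} is used, and the part we follow, rather than reprove, from that paper.
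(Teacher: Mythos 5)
Your single\nobreakdash-variable lemma is fine: for a \emph{deterministic} Hermitian $M$, the phase\nobreakdash-affinity $\phi\left(M+\Lambda_{\mu}A\Lambda_{\mu}^{*}\right)=\phi_{0}+2\,\mathrm{Re}\left(\mu\rho\right)$, the chord\nobreakdash-midpoint trick, and the observation that a pseudo\nobreakdash-reflection changes $A$ by $vw^{*}+wv^{*}$ with $v\perp w$ (a rank\nobreakdash-two, trace\nobreakdash-zero perturbation, handled by eigenvalue interlacing) do give a common interlacing and hence real\nobreakdash-rootedness of $\mathbb{E}_{W}\left[\phi\left(M+WAW^{*}\right)\right]$. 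The genuine gap is in the propagation step, and it is a circularity. At a node of depth $k$ you certify common interlacing of the children by showing every convex combination of them is real rooted; by your own reduction (phase\nobreakdash-affinity plus chord\nobreakdash-midpoint) such a combination equals $\mathbb{E}\left[\phi\left(M^{\sharp}+W'\widetilde{A}\,W'^{*}\right)\right]$, where $W'$ is a \emph{new} two\nobreakdash-valued rank\nobreakdash-1 variable (with phases generally outside the support of $W_{k+1}$) and all $N-k-1$ variables below the node are still random. That is an expected characteristic polynomial in $N-k$ independent rank\nobreakdash-1 variables: strictly fewer than $N$ only when $k\ge1$. At the root ($k=0$) it has exactly $N$ random variables, i.e.\ it is an instance of the very statement being proved, so the induction ``on strictly fewer random variables'' does not close. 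The only other reading of your sentence --- that the fixed\nobreakdash-below\nobreakdash-variables common interlacing supplied by the single\nobreakdash-variable lemma passes through the conditional expectation --- is false in general: an average of families each having a common interlacing need not have one (if it did, the whole theorem would be immediate), which is precisely why the theorem has content.

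The paper closes this loop by a different mechanism, which is the idea missing from your proposal. It peels off the \emph{innermost} variable $W_{1,\ell\left(1\right)}$, so each fixed value $B$ is absorbed into the Hermitian argument ($A_{1}\mapsto BA_{1}B^{*}$) and the children are honest instances $P_{{\cal W}'}$ with $N-1$ variables; and it proves their pairwise common interlacing \emph{not} by reducing to real\nobreakdash-rootedness of further expected polynomials, but via Lemma \ref{lem:rank-1 linearity}: assuming only the $\left(N-1\right)$\nobreakdash-variable real\nobreakdash-rootedness for \emph{arbitrary} Hermitian arguments, the polynomial $P_{{\cal W}'}\left(\ldots,A_{i}+\mu vv^{*},\ldots\right)$ is linear in $\mu$, so by Fact \ref{fact:interlace} the polynomials $P_{{\cal W}'}\left(\ldots,A_{i},\ldots\right)$ and $P_{{\cal W}'}\left(\ldots,A_{i}+vv^{*},\ldots\right)$ interlace. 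Writing $DA_{1}D^{*}-A_{1}=uu^{*}-vv^{*}$ (your rank\nobreakdash-two trace\nobreakdash-zero decomposition, but now used at the level of the expected polynomials) and passing through the intermediate $P_{{\cal W}'}\left(A_{1}-vv^{*},A_{2},\ldots\right)$ then yields the common interlacing of $P_{{\cal W}'}\left(A_{1},\ldots\right)$ and $P_{{\cal W}'}\left(DA_{1}D^{*},\ldots\right)$. Your deterministic eigenvalue argument is exactly this statement in the case $\ell\left({\cal W}'\right)=0$, but it does not transfer to the expected polynomials, and that transfer --- via the strengthened inductive hypothesis over all Hermitian arguments and the rank\nobreakdash-one linearity lemma --- is the actual content of the proof; to repair your write\nobreakdash-up you would essentially have to reinstate it.
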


Note that the characteristic polynomial of a Hermitian matrix is in
$\mathbb{R}\left[x\right]$, and so $P_{{\cal W}}\left(A_{1},\ldots,A_{m}\right)$,
which is an average of such polynomials, is also in $\mathbb{R}\left[x\right]$.
\begin{lem}
\label{lem:rank-1 linearity}In the notation of Theorem \ref{thm:MSS15},
assume that $P_{{\cal W}}\left(A_{1},\ldots,A_{m}\right)$ is real
rooted whenever $A_{1},\ldots,A_{m}\in\mathrm{M}_{d}\left(\mathbb{C}\right)$
are Hermitian. Then, for every $v\in\mathbb{C}^{d}$,\textup{ the
roots $\alpha_{d}\le\ldots\le\alpha_{1}$ of $P_{{\cal W}}\left(A_{1},\ldots,A_{i}+vv^{*},\ldots,A_{m}\right)$
and the roots $\beta_{d}\le\ldots\le\beta_{1}$ of $P_{{\cal W}}\left(A_{1},\ldots,A_{m}\right)$
satisfy
\[
\beta_{d}\le\alpha_{d}\le\beta_{d-1}\le\alpha_{d-1}\le\ldots\le\beta_{1}\le\alpha_{1}.
\]
In other words, the polynomials $P_{{\cal W}}\left(A_{1},\ldots,A_{i}+vv^{*},\ldots,A_{m}\right)$
and $P_{{\cal W}}\left(A_{1},\ldots,A_{m}\right)$ interlace in a
strong sense.}
\end{lem}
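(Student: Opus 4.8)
The plan is to lift the classical rank-one interlacing fact for a single Hermitian matrix to the level of the expected characteristic polynomial, following the affineness argument of \cite{MSS15}. If $v=0$ the two polynomials coincide and there is nothing to prove, so assume $v\neq 0$. Fix a realization of $\mathcal{W}$ and write
\[
N=\sum_{k=1}^{m} W_{k,1}\cdots W_{k,\ell(k)}\,A_{k}\,W_{k,\ell(k)}^{*}\cdots W_{k,1}^{*},
\qquad
u= W_{i,1}\cdots W_{i,\ell(i)}\,v .
\]
Since conjugation is linear, the matrix inside $P_{\mathcal W}(A_1,\dots,A_i+t\,vv^{*},\dots,A_m)$ is exactly $N+t\,uu^{*}$ for every $t\in\mathbb{R}$. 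By the matrix determinant lemma, $\det(xI-N-t\,uu^{*})=\det(xI-N)-t\,u^{*}\operatorname{adj}(xI-N)\,u$, which is affine in $t$; equivalently $\phi(N+t\,uu^{*})=(1-t)\phi(N)+t\,\phi(N+uu^{*})$ pointwise in $\mathcal W$. Taking expectations and setting $f:=P_{\mathcal W}(A_1,\dots,A_m)$ and $g:=P_{\mathcal W}(A_1,\dots,A_i+vv^{*},\dots,A_m)$ gives
\[
P_{\mathcal W}\big(A_1,\dots,A_i+t\,vv^{*},\dots,A_m\big)=(1-t)\,f+t\,g\qquad(t\in\mathbb{R}).
\]
As $A_i+t\,vv^{*}$ is Hermitian for every real $t$, the hypothesis of the lemma shows this polynomial is real rooted for all $t\in\mathbb{R}$; moreover $f$ and $g$ are both monic of degree $d$.

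Next I would analyze the difference $h:=f-g=\mathbb{E}_{\mathcal W}\big[u^{*}\operatorname{adj}(xI-N)\,u\big]$. The coefficient of $x^{d-1}$ in $u^{*}\operatorname{adj}(xI-N)u$ is $\lVert u\rVert^{2}$, and since each $W_{k,j}$ is unitary we have $\lVert u\rVert=\lVert v\rVert$, so $h$ has degree exactly $d-1$ with positive leading coefficient $\lVert v\rVert^{2}$ (here is where $v\neq 0$ is used). Now $f+\alpha h=(1+\alpha)f-\alpha g$ is of the form $(1-t)f+t\,g$, hence real rooted for every $\alpha\in\mathbb{R}$, so Fact~\ref{fact:interlace} applies to the pair $(f,h)$: the degree-$d$ polynomial $f$ and the degree-$(d-1)$ polynomial $h$ interlace, and — their leading coefficients having the same sign — the map $\alpha\mapsto r_j(f+\alpha h)$ is monotonically decreasing, never exceeding its limit $r_{j-1}(h)$ as $\alpha\to-\infty$ (with the conventions $r_0(h)=+\infty$, $r_d(h)=-\infty$). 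Writing $\beta_d\le\cdots\le\beta_1$ for the roots of $f$ and $\alpha_d\le\cdots\le\alpha_1$ for those of $g=f-h$, I evaluate this monotone family at $\alpha=0$ and $\alpha=-1$: monotonicity gives $\beta_j=r_j(f+0\cdot h)\le r_j(f-h)=\alpha_j$; and $\alpha_j=r_j(f-h)\le r_{j-1}(h)\le\beta_{j-1}$, the last inequality being part of the interlacing of $f$ and $h$. Chaining these yields $\beta_d\le\alpha_d\le\beta_{d-1}\le\alpha_{d-1}\le\cdots\le\beta_1\le\alpha_1$, which is precisely the asserted (strong) interlacing.

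I expect the only genuinely delicate points to be the two preparatory observations in the first paragraph — that a rank-one positive semidefinite bump of $A_i$ becomes, inside the expectation, a rank-one positive semidefinite bump $t\,uu^{*}$ of a single Hermitian matrix (this uses linearity of conjugation together with the fact that unitaries preserve rank and norm), and that the determinant is affine under such a bump — together with, at the very end, the point that one must extract the \emph{direction} of the interlacing from the monotonicity of the roots: the interlacing relations alone would only give a ``common interlacing'' that does not by itself say which polynomial's roots are the larger ones. Everything after the translation is routine one-variable real-rootedness bookkeeping via Fact~\ref{fact:interlace}, and one must only take mild care with the degree count $\deg h=d-1$ (needed for the fact to apply verbatim) and with the end-of-list conventions when $j=1$ or $j=d$.
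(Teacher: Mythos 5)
Your proposal is correct and follows essentially the same route as the paper's proof: establish the affine (``rank-1 linearity'') dependence of $P_{{\cal W}}$ on the rank-one perturbation, then apply Fact~\ref{fact:interlace} together with the monotonicity of the roots of $f+\alpha h$ to read off the strong interlacing. The only cosmetic differences are that you prove the affineness via the matrix determinant lemma for each realization of ${\cal W}$ (the paper reduces to constant ${\cal W}$ and expands the determinant after a unitary change of basis) and that you work with $h=f-g$, which is the negative of the paper's $Q$, so your roots decrease in $\alpha$ where the paper's increase in $\mu$.
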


\begin{proof}
Denote $Q\left(A_{1},\ldots,A_{m}\right)=\left[\frac{\partial}{\partial\mu}P_{{\cal W}}\left(A_{1},A_{2},\ldots,A_{i}+\mu vv^{*},\ldots,A_{m}\right)\right]_{\mu=0}$.
We claim that\footnote{This property is called ``Rank-1 linearity'' in \cite{MSS15}.}
\begin{equation}
P_{{\cal W}}\left(A_{1},\ldots,A_{i}+\mu vv^{*},\ldots,A_{m}\right)=P_{{\cal W}}\left(A_{1},\ldots,A_{m}\right)+\mu\cdot Q\left(A_{1},\ldots,A_{m}\right)\label{eq:rank-1 linearity}
\end{equation}
for every $\mu\in\mathbb{C}$. To see this, it is enough to show \eqref{eq:rank-1 linearity}
in the case ${\cal W}$ is constant (that is, $W_{i,j}$ is constant
for every $i,j$), the general statement will then follow by linearity
of expectation and of taking the derivative. For a constant ${\cal W}$,
we only need to show that for any Hermitian $A\in\mathrm{M}_{d}\left(\mathbb{C}\right)$,
the characteristic polynomial $\phi\left(A+\mu vv^{*}\right)$ is
linear in $\mu$. By conjugating $A$ and $vv*$ by some unitary matrix,
we may assume $v^{*}=\left(\alpha,0,0,\ldots,0\right)$, and then
the claim is clear by expanding the determinant of $xI-\left(A+\mu vv^{*}\right)$
along, say, the first row. 

If $A_{i}$ is Hermitian, then for every $\mu\in\mathbb{R}$, $A_{i}+\mu vv^{*}$
is Hermitian, so by our assumption the left hand side of \eqref{eq:rank-1 linearity}
is real rooted. Therefore, the right hand side $P_{{\cal W}}\left(A_{1},\ldots,A_{m}\right)+\mu\cdot Q\left(A_{1},\ldots,A_{m}\right)$
is also real rooted for every $\mu\in\mathbb{R}$. Note that while
$P_{{\cal W}}\left(A_{1},\ldots,A_{m}\right)$ is monic of degree
$d$ (in the variable $x$), $Q\left(A_{1},\ldots,A_{m}\right)$ is
a polynomial of degree $d-1$ with a negative leading coefficient
(because $\left[\frac{\partial}{\partial\mu}\det\left(xI-\left(A+\mu vv^{*}\right)\right)\right]_{\mu=0}$
has this property when ${\cal W}$ is constant ). It follows from
Fact \ref{fact:interlace} that $P_{{\cal W}}\left(A_{1},\ldots,A_{m}\right)$
and $Q\left(A_{1},\ldots,A_{m}\right)$ interlace, and the real roots
$\vartheta_{d-1}\le\ldots\le\vartheta_{1}$ of $Q\left(A_{1},\ldots,A_{m}\right)$
satisfy
\[
\beta_{d}\le\vartheta_{d-1}\le\beta_{d-1}\le\ldots\le\beta_{2}\le\vartheta_{1}\le\beta_{1}.
\]
Moreover, the $i$-th root of $P_{{\cal W}}\left(A_{1},\ldots,A_{m}\right)+\mu\cdot Q\left(A_{1},\ldots,A_{m}\right)$
grows continuously from $\beta_{i}$ to $\vartheta_{i-1}$ as $\mu$
grows (the largest root grows from $\beta_{1}$ to $+\infty$). In
particular, $\mu=1$ corresponds to $P_{{\cal W}}\left(A_{1},\ldots,A_{i}+vv^{*},\ldots,A_{m}\right)$,
so the roots $\alpha_{d}\le\ldots\le\alpha_{1}$ satisfy
\[
\beta_{d}\le\alpha_{d}\le\vartheta_{d-1}\le\beta_{d-1}\le\alpha_{d-1}\le\vartheta_{d-1}\le\ldots\le\beta_{2}\le\alpha_{2}\le\vartheta_{1}\le\beta_{1}\le\alpha_{1}.
\]
\end{proof}

\begin{proof}[Proof of Theorem \ref{thm:MSS15}]
 We prove by induction on the number of $W_{i,j}$'s, namely, induction
on $\ell\left({\cal W}\right)=\ell\left(1\right)+\ldots+\ell\left(m\right)$.
The statement is clear for $\ell\left({\cal W}\right)=0$. Given ${\cal W}$
with $\ell\left(W\right)>0$, assume without loss of generality that
$\ell\left(1\right)>0$, and denote by ${\cal W}'$ the set of random
variables ${\cal W}\smallsetminus\left\{ W_{1,\ell\left(1\right)}\right\} $.
We assume, by the induction hypothesis, that $P_{{\cal W}'}\left(A_{1},\ldots,A_{m}\right)$
is real rooted for all Hermitian matrices $A_{1},\ldots,A_{m}$.

For clarity we assume $W_{1,\ell\left(1\right)}$ takes only finitely
many values, but the same argument works in the general case (Fact
\ref{fact:common interlacing} has a variant for a set of polynomials
of any cardinality). Let $B_{1},\ldots,B_{t}\in U\left(d\right)$
be the possible values of $W_{1,\ell\left(1\right)}$, obtained with
respective probabilities $p_{1},\ldots,p_{t}$. We need to show that
\[
P_{{\cal W}}\left(A_{1},\ldots,A_{m}\right)=p_{1}\cdot P_{{\cal W}'}\left(B_{1}A_{1}B_{1}^{*},A_{2},\ldots,A_{m}\right)+\cdots+p_{t}\cdot P_{{\cal W}'}\left(B_{t}A_{1}B_{t}^{*},A_{2},\ldots,A_{m}\right)
\]
is real rooted for all Hermitian matrices $A_{1},\ldots,A_{m}\in M_{d}\left(\mathbb{C}\right)$.
By Fact \ref{fact:common interlacing}, it is enough to show the $t$
polynomials $P_{{\cal W}'}(B_{j}A_{1}B_{j}^{*},A_{2},\ldots,A_{m})$
$\left(1\le j\le t\right)$ have a common interlacing. By definition,
this is equivalent to showing that any two of them have a common interlacing.
Thus, it is enough to show that if $B,C\in U\left(d\right)$ satisfy
$\mathrm{rank}\left(BC^{-1}-I_{d}\right)=1$, then the polynomials
$P_{{\cal W}'}\left(BA_{1}B^{*},A_{2},\ldots,A_{m}\right)$ and $P_{{\cal W}'}\left(CA_{1}C^{*},A_{2},\ldots,A_{m}\right)$
have common interlacing.

By replacing $A_{1}$ with $CA_{1}C^{*}$ and writing $D=BC^{-1}$
we need to prove that $P_{{\cal W}'}\left(DA_{1}D^{*},A_{2},\ldots,A_{m}\right)$
and $P_{{\cal W}'}\left(A_{1},A_{2},\ldots,A_{m}\right)$ have a common
interlacing (now $\mathrm{rank}\left(D-I_{d}\right)=1$). If $DA_{1}D^{*}=A_{1}$
the statement is obvious. Otherwise, we claim that $DA_{1}D^{*}-A_{1}$
is a rank-2 trace-0 Hermitian matrix: by unitary conjugation we may
assume 
\[
D=\left(\begin{array}{cccc}
\lambda\\
 & 1\\
 &  & \ddots\\
 &  &  & 1
\end{array}\right)\in\Lambda,
\]
and direct calculation then shows that 
\[
DA_{1}D^{*}-A_{1}=\left(\begin{array}{cccc}
0 & \lambda a_{1,2} & \cdots & \lambda a_{1,d}\\
\overline{\lambda}a_{2,1} & 0 & \cdots & 0\\
\vdots & \vdots & \ddots & \vdots\\
\overline{\lambda}a_{d,1} & 0 & \cdots & 0
\end{array}\right).
\]
Let $\pm\nu$ ($\nu\in\mathbb{R}_{>0}$) be the non-zero eigenvalues
of $DA_{1}D^{*}-A_{1}$. By spectral decomposition, $DA_{1}D^{*}-A_{1}=uu^{*}-vv^{*}$
for some vectors $u,v\in\mathbb{C}^{d}$ of length $\sqrt{\nu}$.
Consider also $P_{{\cal W}'}\left(A_{1}-vv^{*},A_{2},\ldots,A_{m}\right)$
and denote
\begin{eqnarray*}
\alpha_{d}\le\ldots\le\alpha_{1} &  & \mathrm{the\,roots\,of\;}P_{{\cal W}'}\left(A_{1},A_{2},\ldots,A_{m}\right)\\
\beta_{d}\le\ldots\le\beta_{1} &  & \mathrm{the\,roots\,of\;}P_{{\cal W}'}\left(A_{1}-vv^{*},A_{2},\ldots,A_{m}\right)\\
\gamma_{d}\le\ldots\le\gamma_{1} &  & \mathrm{the\,roots\,of\;}P_{{\cal W}'}\left(DA_{1}D^{*},A_{2},\ldots,A_{m}\right).
\end{eqnarray*}
The assumptions of Lemma \ref{lem:rank-1 linearity} are satisfied
for ${\cal W}'$ by the induction hypothesis. We can apply this lemma
on $P_{{\cal W}'}\left(A_{1}-vv^{*},A_{2},\ldots,A_{m}\right)$ and
$P_{{\cal W}'}\left(A_{1},A_{2},\ldots,A_{m}\right)$ to obtain that
\[
\beta_{d}\le\alpha_{d}\le\beta_{d-1}\le\alpha_{d-1}\le\ldots\le\beta_{2}\le\alpha_{2}\le\beta_{1}\le\alpha_{1}.
\]
Similarly, we can apply the lemma on 
\[
P_{{\cal W}'}\left(A_{1}-vv^{*},A_{2},\ldots,A_{m}\right)\,\,\mathrm{and}\,\,P_{{\cal W}'}\left(DA_{1}D^{*},A_{2},\ldots,A_{m}\right)=P_{{\cal W}'}\left(A_{1}-vv^{*}+uu*,A_{2},\ldots,A_{m}\right)
\]
 to obtain that
\[
\beta_{d}\le\gamma_{d}\le\beta_{d-1}\le\gamma_{d-1}\le\ldots\le\beta_{2}\le\gamma_{2}\le\beta_{1}\le\gamma_{1}.
\]
It follows that $P_{{\cal W}'}\left(DA_{1}D^{*},A_{2},\ldots,A_{m}\right)$
and $P_{{\cal W}'}\left(A_{1},A_{2},\ldots,A_{m}\right)$ have a common
interlacing. This completes the proof.
\end{proof}

\subsection{Average Characteristic Polynomial of Random Coverings}

Let $G$ be a finite graph \emph{without loops}, $\Gamma$ a group
and $\pi\colon\Gamma\to\mathrm{GL_{d}}\left(\mathbb{C}\right)$ a
unitary representation. We now deduce from Theorem \ref{thm:MSS15}
that for certain distributions of $\left(\Gamma,\pi\right)$-coverings
of $G$, the average characteristic polynomial is real rooted. Recall
that $\phi_{\gamma,\pi}$ denotes the characteristic polynomial of
$A_{\gamma,\pi}$ -- see \eqref{eq:def-of-phi}. 
\begin{prop}
\label{prop:avg-char-poly-of-random-coverings-is-real-rooted}Let
$X_{1},\ldots,X_{r}$ be independent random variables, each taking
values in the space of $\Gamma$-labelings of $G$. Suppose that all
possible values of $X_{i}$ agree on all edges in $E^{+}\left(G\right)$
except (possibly) for one, and on that edge suppose that $\pi\left(X\left(e\right)\right)$
is a rank-1 random variable of matrices in $U\left(d\right)$, as
in Definition \ref{def:rank-1-variable}. Then $\mathbb{E}_{X_{1}\cdots X_{r}}\left[\phi_{X_{1}\cdot\ldots\cdot X_{r},\pi}\right]$
is real rooted.
\end{prop}

\begin{proof}
As we noted in the proof of Claim \ref{claim: real spectrum}, for
any $\Gamma$-labeling $\gamma$, we have $\phi_{\gamma,\pi}=\phi_{\gamma,\pi'}$
whenever $\pi$ and $\pi'$ are isomorphic, so we assume without loss
of generality that $\pi\left(\Gamma\right)\subseteq U\left(d\right)$.
For every $\Gamma$-labeling $\gamma$, the matrix $A_{\gamma,\pi}$
is a $nd\times nd$ matrix composed of $n^{2}$ blocks of size $d\times d$.
The blocks are indexed by ordered pairs of vertices of $G$. Similarly
to a notation we used on Page \pageref{notation: A_gamma,pi}, for
any $e\in E^{+}\left(G\right)$, we let $A_{\gamma,\pi}^{\pm}\left(e\right)\in\mathrm{M}_{nd}$\marginpar{$A_{\gamma,\pi}^{\pm}\left(e\right)$}
be the matrix with zero blocks except for the blocks corresponding
to $e$ and to $-e$. In the block $\left(t\left(e\right),h\left(e\right)\right)$
we have $\pi\left(\gamma\left(e\right)\right)$ and in the block $\left(h\left(e\right),t\left(e\right)\right)$
we have $\pi\left(\gamma\left(-e\right)\right)=\pi\left(\gamma\left(e\right)\right)^{*}$.
 It is clear that $A_{\gamma,\pi}^{\pm}\left(e\right)$ is Hermitian
and that
\[
A_{\gamma,\pi}=\sum_{e\in E^{+}\left(G\right)}A_{\gamma,\pi}^{\pm}\left(e\right).
\]
For every random $\Gamma$-labeling $X$ of $G$ and $e\in E^{+}\left(G\right)$,
let $W_{e}\left(X\right)$ be the following random matrix in $U\left(nd\right)$:
\[
W_{e}\left(X\right)=\left(\begin{array}{ccccc}
I_{d}\\
 & \ddots\\
 &  & \pi\left(X\left(e\right)\right)\\
 &  &  & \ddots\\
 &  &  &  & I_{d}
\end{array}\right)
\]
where all non-diagonal $d\times d$ blocks are zeros, the $\left(t\left(e\right),t\left(e\right)\right)$
block is $\pi\left(X\left(e\right)\right)$, and the remaining diagonal
blocks are $I_{d}$. Also let $\mathbf{1}\colon E\left(G\right)\to\Gamma$
be the trivial labeling which labels all edges by the identity element
of $\Gamma$. With these notations, we have\footnote{This formula is exactly the place this proof breaks for loops.}
\[
A_{X_{1}\cdot\ldots\cdot X_{r},\pi}^{\pm}\left(e\right)=W_{e}\left(X_{1}\right)\cdot\ldots\cdot W_{e}\left(X_{r}\right)A_{\mathbf{1},\pi}^{\pm}\left(e\right)W_{e}\left(X_{r}\right)^{*}\cdot\ldots\cdot W_{e}\left(X_{1}\right)^{*},
\]
and
\[
A_{X_{1}\cdot\ldots\cdot X_{r},\pi}=\sum_{e\in E^{+}\left(G\right)}A_{X_{1}\cdot\ldots\cdot X_{r},\pi}^{\pm}\left(e\right).
\]
By assumption, the random $\Gamma$-labeling $X_{i}$ is constant
on all edges except for on one edge $e$. Thus, $\left\{ X_{i}\left(e\right)\right\} _{e\in E^{+}\left(G\right)}$
is a set of independent variables. Moreover, the set $\left\{ W_{e}\left(X_{i}\right)\right\} _{e\in E^{+}\left(G\right),1\le i\le r}$
is a set of independent variables taking values in $U\left(nd\right)$,
and every $W_{e}\left(X_{i}\right)$ is a rank-1 random variable by
the hypothesis on the values of $X_{i}$. The proposition now follows
by applying Theorem \ref{thm:MSS15}.
\end{proof}
Repeating the line of argument we explained in Section \ref{subsec:Overview-of-the},
we deduce:
\begin{cor}
\label{cor:actual-labeling-beating-average}In the notation of Proposition
\ref{prop:avg-char-poly-of-random-coverings-is-real-rooted}, there
is a $\Gamma$-labeling $\gamma=\gamma_{1}\cdots\gamma_{r}$ of $G$,
with $\gamma_{i}$ in the support of $X_{i}$, such that the largest
root of $\phi_{\gamma,\pi}$ is at most the largest root of $\mathbb{E}_{X_{1}\cdots X_{r}}\left[\phi_{X_{1}\cdots X_{r},\pi}\right]$.
\end{cor}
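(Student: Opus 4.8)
The plan is to run the iterative ``pruning along straight lines'' argument of \cite{MSS13} (sketched in Section \ref{sub:Overview-of-the}), using the sequence $X_1,\ldots,X_r$ of independent rank-1 variables in place of the binary choices on single edges. I will fix a value $\gamma_i$ in the support of $X_i$ one index at a time, for $i=1,\ldots,r$, maintaining the invariant that after $k$ steps the polynomial
\[
g_k(x)\ :=\ \mathbb{E}_{X_{k+1}\cdots X_r}\bigl[\phi_{\gamma_1\cdots\gamma_k X_{k+1}\cdots X_r,\,\pi}\bigr]
\]
is real rooted with largest root at most that of $g_0:=\mathbb{E}_{X_1\cdots X_r}[\phi_{X_1\cdots X_r,\pi}]$. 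The base case, real-rootedness of $g_0$, is exactly Proposition \ref{prop:avg-char-poly-of-random-coverings-is-real-rooted}.

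For the inductive step, assume $\gamma_1,\ldots,\gamma_{k-1}$ have been chosen and $g_{k-1}$ satisfies the invariant. Suppose first that $X_k$ has finitely many values $\gamma_k^{(1)},\ldots,\gamma_k^{(t)}$ with probabilities $p_1,\ldots,p_t$ (the general case is handled verbatim as in the proof of Proposition \ref{prop:avg-char-poly-of-random-coverings-is-real-rooted}, via the infinite-family variant of Fact \ref{fact:common interlacing}). Setting $f_j(x):=\mathbb{E}_{X_{k+1}\cdots X_r}[\phi_{\gamma_1\cdots\gamma_{k-1}\gamma_k^{(j)}X_{k+1}\cdots X_r,\,\pi}]$, we have $g_{k-1}=\sum_j p_j f_j$, and every $f_j$ is monic of degree $nd$. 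Each $f_j$ is real rooted: it is the expected characteristic polynomial of the family obtained from $X_1,\ldots,X_r$ by replacing $X_1,\ldots,X_{k-1},X_k$ with the point masses $\gamma_1,\ldots,\gamma_{k-1},\gamma_k^{(j)}$, which still meets the hypotheses of Proposition \ref{prop:avg-char-poly-of-random-coverings-is-real-rooted} since a constant labeling is vacuously a rank-1 variable. More is true: for \emph{every} probability vector $(q_1,\ldots,q_t)$ the combination $\sum_j q_j f_j$ is real rooted, being the expected characteristic polynomial of the family with $X_1,\ldots,X_{k-1}$ fixed and $X_k$ re-weighted to take $\gamma_k^{(j)}$ with probability $q_j$ --- legitimate because the hypotheses of that proposition constrain only the supports of the variables, not their distributions. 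By Fact \ref{fact:common interlacing}, the $f_j$ therefore admit a common interlacing, so the largest root of $g_{k-1}=\sum_j p_j f_j$ lies in the convex hull of the largest roots of the $f_j$; hence some $j$ satisfies $r_1(f_j)\le r_1(g_{k-1})\le r_1(g_0)$. Put $\gamma_k:=\gamma_k^{(j)}$ and $g_k:=f_j$, restoring the invariant.

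After $r$ steps all of $X_1,\ldots,X_r$ have been fixed, $\gamma:=\gamma_1\cdots\gamma_r$ has $\gamma_i$ in the support of $X_i$, and $g_r=\phi_{\gamma,\pi}$ has largest root at most $r_1(g_0)$, which is the claim. I do not expect a genuine obstacle here: the only two points requiring attention are that collapsing some $X_i$ to point masses and re-weighting one of them keeps us inside the class of families to which Proposition \ref{prop:avg-char-poly-of-random-coverings-is-real-rooted} applies (immediate, as its hypothesis is a condition on supports alone), and that one must invoke Fact \ref{fact:common interlacing} in the ``all convex combinations real rooted $\Rightarrow$ common interlacing'' direction, which is precisely what upgrades ``the average beats one of its pieces'' from heuristic to proof.
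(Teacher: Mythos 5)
Your proposal is correct and is essentially the paper's own argument: both rest on the observation that the hypotheses of Proposition \ref{prop:avg-char-poly-of-random-coverings-is-real-rooted} depend only on the supports of the $X_i$, so every re-weighting (including point masses) keeps all convex combinations real rooted, whence Fact \ref{fact:common interlacing} gives a common interlacing and lets you fix one variable at a time without increasing the largest root. The only cosmetic difference is bookkeeping: the paper runs an induction on $r$ and absorbs the chosen labeling into the next variable (defining $X_2'=\gamma_1\cdot X_2$, then recovering $\gamma_1^{-1}\gamma_2'$ at the end), while you keep the already-fixed labelings as point-mass factors in the family, which is equally valid.
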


\begin{proof}
We prove by induction on $r$. Consider the support $\mathrm{Supp}\left(X_{1}\right)\subseteq\left\{ \Gamma\mathrm{-labelings\,of}\,G\right\} $.
Note that if $X_{1},\ldots,X_{r}$ satisfy the assumptions of Proposition
\ref{prop:avg-char-poly-of-random-coverings-is-real-rooted}, then
so would $X_{1}',X_{2},\ldots,X_{r}$ when $X_{1}'$ is any random
variable with the same support as $X_{1}$. Therefore, by Fact \ref{fact:common interlacing},
all polynomials in the family 
\[
\left\{ \mathbb{E}_{X_{2}\cdots X_{r}}\left[\phi_{\gamma\cdot X_{2}\cdots X_{r},\pi}\right]\right\} _{\gamma\in\mathrm{Supp}\left(X_{1}\right)}
\]
have a common interlacing. In particular, there is a polynomial in
this family, say the one defined by $\gamma_{1}\in\mathrm{Supp}\left(X_{1}\right)$,
with maximal root at most the maximal root of $\mathbb{E}_{X_{1}\cdots X_{r}}\left[\phi_{X_{1}\cdots X_{r},\pi}\right]$.
If $r=1$ we are done. If $r\ge2$, define $X_{2}'=\gamma_{1}\cdot X_{2}$.
The random variables $X_{2}',X_{3},\ldots,X_{r}$ still satisfy the
hypotheses of Proposition \ref{prop:avg-char-poly-of-random-coverings-is-real-rooted}.
Hence, by the induction hypothesis there is a $\Gamma$-labeling $\gamma=\gamma_{2}'\cdot\gamma_{3}\cdots\gamma_{r}$
of $G$ with $\gamma_{2}'\in\mathrm{Supp}\left(X_{2}'\right)$ and
$\gamma_{i}\in\mathrm{Supp}\left(X_{i}\right)$ for $3\le i\le r$,
such that the largest root of $\phi_{\gamma,\pi}$ is at most the
largest root of $\mathbb{E}_{X_{2}'\cdot X_{3}\cdots X_{r}}[\phi_{X_{2}'\cdot X_{3}\cdots X_{r},\pi}]$,
which in turn is at most the largest root of $\mathbb{E}_{X_{1}\cdots X_{r}}[\phi_{X_{1}\cdots X_{r},\pi}]$.
The statement of the corollary is now satisfied with $\gamma_{1},\gamma_{1}^{-1}\gamma_{2}',\gamma_{3},\ldots,\gamma_{r}$.
\end{proof}

\subsection{\label{subsec:Proof-of-Theorem P2}Proof of Theorem \ref{thm:P2}}

We finally have all the tools needed to prove Theorem \ref{thm:P2}.
Let $G$ be a finite, loopless graph, and let $\left(\Gamma,\pi\right)$
satisfy property $\left({\cal P}2\right)$, that is, $\Gamma$ is
a finite group, $\pi\colon G\to\mathrm{GL}_{d}\left(\mathbb{C}\right)$
is a representation and $\pi\left(\Gamma\right)$ is a complex reflection
group (i.e., generated by pseudo-reflections). Assume that $\Gamma$
is generated by $g_{1},\ldots,g_{s}$, where $\pi\left(g_{i}\right)$
is a pseudo-reflection for all $i$, i.e., $\mathrm{rank}\left(\pi\left(g_{i}\right)-I_{d}\right)=1$.
We first show that a certain lazy random walk on $\Gamma$, where
in each step we use only one of the $g_{i}$'s, converges to the uniform
distribution:
\begin{claim}
\label{claim: random-walk-converges}Define a random walk $\left\{ a_{n}\right\} _{n=0}^{\infty}$
on $\Gamma$ as follows: $a_{0}=\mathbf{1}_{\Gamma}$ (the identity
element of $\Gamma$), and for $n\ge1$ 
\begin{eqnarray*}
a_{n} & = & \begin{cases}
g_{n\,\mathrm{mod}\,s}\cdot a_{n-1} & \mathrm{with\,probability}\,\frac{1}{3}\\
\left(g_{n\,\mathrm{mod}\,s}\right)^{-1}\cdot a_{n-1} & \mathrm{with\,probability}\,\frac{1}{3}\\
a_{n-1} & \mathrm{with\,probability}\,\frac{1}{3}
\end{cases}.
\end{eqnarray*}
Then $a_{1},a_{2},\ldots$ converges to the uniform distribution on
$\Gamma$.
\end{claim}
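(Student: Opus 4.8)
The plan is to recognize $\{a_n\}$ as a random walk on the finite group $\Gamma$ by left multiplication that is time-inhomogeneous but periodic with period $s$, to pass to a sub-sampled \emph{homogeneous} chain, and then to invoke the standard convergence theorem for finite irreducible aperiodic Markov chains. Note that this claim uses only that $g_1,\ldots,g_s$ generate $\Gamma$; the fact that the $\pi(g_i)$ are pseudo-reflections is irrelevant here (it is used afterwards, to make the relevant matrices $W_e(X_i)$ rank-$1$ random variables).

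First I would fix notation. For $1\le j\le s$ let $\mu_j$ be the probability measure on $\Gamma$ assigning mass $\tfrac13$ to each of $g_j$, $g_j^{-1}$ and $\mathbf{1}_\Gamma$ (if some of these coincide the masses simply add, which is harmless). Then $a_n=h_n a_{n-1}$, where the increments $h_n$ are independent and $h_n$ has law $\mu_{j(n)}$ with $j(n)\equiv n \bmod s$ (indices read in $\{1,\ldots,s\}$). Thus $a_n$ is obtained from $a_0=\mathbf{1}_\Gamma$ by successive left multiplications, and the increment distributions cycle with period $s$. Consider the sub-sampled process $b_k:=a_{ks}$ for $k\ge 0$. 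It is a homogeneous Markov chain: $b_{k+1}=(h_{ks+s}h_{ks+s-1}\cdots h_{ks+1})\,b_k$, and the increment $h_{ks+s}\cdots h_{ks+1}$ has a fixed law $\nu$, namely the convolution $\mu_s\ast\mu_{s-1}\ast\cdots\ast\mu_1$. So $\{b_k\}$ is a random walk on $\Gamma$ by left multiplication with step distribution $\nu$. Two facts about $\nu$ are immediate: (i) $\mathbf{1}_\Gamma\in\mathrm{supp}(\nu)$, since taking every $h$-factor equal to $\mathbf{1}_\Gamma$ contributes positive mass to $\nu(\mathbf{1}_\Gamma)$; hence $\{b_k\}$ has a positive self-loop at every state and is aperiodic; (ii) for each $j$, taking the factor at the position of residue $j$ equal to $g_j$ (resp.\ $g_j^{-1}$) and all other factors equal to $\mathbf{1}_\Gamma$ shows $g_j,g_j^{-1}\in\mathrm{supp}(\nu)$, so $\mathrm{supp}(\nu)\supseteq\{g_1^{\pm1},\ldots,g_s^{\pm1}\}$ generates $\Gamma$, which gives irreducibility of $\{b_k\}$.

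Next I would observe that the uniform distribution $U$ on $\Gamma$ is stationary for any random walk by left multiplication, because the transition matrix $P(g,g')=\nu(g'g^{-1})$ is doubly stochastic: $\sum_g \nu(g'g^{-1})=\sum_{g''}\nu(g'')=1$. By the fundamental convergence theorem for finite irreducible aperiodic Markov chains, the law of $b_k$ converges (in total variation) to the unique stationary distribution, which is therefore $U$. To pass from $\{b_k\}$ back to $\{a_n\}$, write $n=ks+t$ with $0\le t<s$; then $a_n=(h_{ks+t}\cdots h_{ks+1})\,b_k$ is a left multiplication of $b_k$ by a random element \emph{independent} of $b_k$. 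Since left multiplication of a uniformly distributed element by an independent random element is again uniformly distributed, if the law of $b_k$ is $\varepsilon$-close to $U$ then so is the law of $a_n$ for every such $n$; letting $k\to\infty$ with $t$ ranging over the finite set $\{0,\ldots,s-1\}$ yields $a_n\to U$.

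The argument is entirely routine; the only points that need a little care are keeping track of the time-inhomogeneity — so that one works with the period-$s$ sub-sampled chain rather than the original chain — and the final remark that convergence of the sub-sampled chain forces convergence of the whole sequence. I do not expect any genuine obstacle.
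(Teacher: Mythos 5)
Your proof is correct and follows essentially the same route as the paper: sub-sample the walk at multiples of $s$ to get a homogeneous random walk whose step distribution is the convolution $\mu_s\ast\cdots\ast\mu_1$, note its support contains the identity and all $g_j^{\pm 1}$ (so its powers eventually cover $\Gamma$), and conclude convergence to the uniform stationary distribution. The only (cosmetic) difference is how the intermediate times are handled -- the paper reruns the same argument for each residue class $\{a_{sn+i}\}$, while you transfer the estimate from $b_k$ to $a_n$ via an independent left multiplication -- both are fine.
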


\begin{proof}
Consider $a_{n}$ as an element of the group-ring $\mathbb{C}\left[\Gamma\right]$
so that the coefficient of $g$ is $\mathrm{Prob}\left[a_{n}=g\right]$.
Then for $n\ge1$, 
\[
a_{s\cdot n}=\left(\frac{1}{3}\mathbf{1}_{\Gamma}+\frac{1}{3}g_{s}+\frac{1}{3}g_{s}^{-1}\right)\cdots\left(\frac{1}{3}\mathbf{1}_{\Gamma}+\frac{1}{3}g_{1}+\frac{1}{3}g_{1}^{-1}\right)\cdot a_{s\cdot\left(n-1\right)}.
\]
The $s$-steps random walk $\left\{ a_{s\cdot n}\right\} _{n=0}^{\infty}$
is defined by the distribution 
\[
h=\left(\frac{1}{3}\mathbf{1}_{\Gamma}+\frac{1}{3}g_{s}+\frac{1}{3}g_{s}^{-1}\right)\cdots\left(\frac{1}{3}\mathbf{1}_{\Gamma}+\frac{1}{3}g_{1}+\frac{1}{3}g_{1}^{-1}\right),
\]
which satisfies $\mathrm{supp}\left(h^{n}\right)=\Gamma$ for all
large $n$. Thus, it converges to the only stationary distribution
of this Markov chain: the uniform distribution. The same argument
applies to $\left\{ a_{s\cdot n+i}\right\} _{n=0}^{\infty}$ for any
remainder $1\le i\le s-1$.
\end{proof}
Now define random $\Gamma$-labelings $\left\{ Z_{n}\right\} _{n=1}^{\infty}$
of $G$ as follows: let $\varepsilon=\left|E^{+}\left(G\right)\right|$
and enumerate the edges of $G$ in an arbitrary order, so $E^{+}\left(G\right)=\left\{ e_{1},\ldots,e_{\varepsilon}\right\} $.
For $i\ge1$ and $1\le j\le\varepsilon$ define $X_{i,j}$ to be the
random $\Gamma$-labeling of $G$ which labels every edge besides
$e_{j}$ with the identity element $\mathbf{1}_{\Gamma}$, and 
\begin{eqnarray*}
X_{i,j}\left(e_{j}\right) & = & \begin{cases}
g_{i\,\mathrm{mod}\,s} & \mathrm{with\,probability}\,\frac{1}{3}\\
\left(g_{i\,\mathrm{mod}\,s}\right)^{-1} & \mathrm{with\,probability}\,\frac{1}{3}\\
\mathbf{1}_{\Gamma} & \mathrm{with\,probability}\,\frac{1}{3}
\end{cases}.
\end{eqnarray*}
Now define $Y_{i}=X_{i,1}\cdots X_{i,\varepsilon}$ and $Z_{n}=Y_{1}Y_{2}\cdots Y_{n}$.
By definition, each random $\Gamma$-labeling $X_{i,j}$ is constant
on every edge except one, and on the remaining edge the ratio of every
two values is a pseudo-reflection. Proposition \ref{prop:avg-char-poly-of-random-coverings-is-real-rooted}
yields, therefore, that $\mathbb{E}_{Z_{n}}\left[\phi_{Z_{n},\pi}\right]$
is real rooted. By Claim \ref{claim: random-walk-converges}, the
random $\Gamma$-labelings $Z_{n}$ converge, as $n\to\infty$, to
the uniform distribution ${\cal C}_{\Gamma,G}$ of all $\Gamma$-labelings
of $G$. Since the map $Z\to\mathbb{E}_{Z}\left[\phi_{Z,\pi}\right]$
is a continuous map from the space of distributions of $\Gamma$-labelings
of $G$ to $\mathbb{R}\left[x\right]$, we get that $\mathbb{E}_{\gamma\in{\cal C}_{\Gamma,G}}\left[\phi_{\gamma,\pi}\right]$
is real rooted, thus the first statement of Theorem \ref{thm:P2}
holds.

Finally, by Corollary \ref{cor:actual-labeling-beating-average},
for every $n$, there is a $\Gamma$-labeling $\gamma_{n}$ of $G$
so that the largest root of $\phi_{\gamma,\pi}$ is at most the largest
root of $\mathbb{E}_{Z_{n}}\left[\phi_{Z_{n},\pi}\right]$. Because
the set of $\Gamma$-labeling of $G$ is finite, the $\gamma_{n}$
have an accumulation point $\gamma_{0}$. As the largest root of $\mathbb{E}_{Z_{n}}\left[\phi_{Z_{n},\pi}\right]$
converges, as $n\to\infty$, to the largest root of $\mathbb{E}_{\gamma\in{\cal C}_{\Gamma,G}}\left[\phi_{\gamma,\pi}\right]$,
the largest root of $\phi_{\gamma_{0},\pi}$ is at most the largest
root of $\mathbb{E}_{\gamma\in{\cal C}_{\Gamma,G}}\left[\phi_{\gamma,\pi}\right]$.
This completes the proof of Theorem \ref{thm:P2}.
\begin{rem}
\label{remark:middle point for Sym group}When $\Gamma=S_{r}$ is
the symmetric group, \cite[Lemma 3.5]{MSS15} gives a specific sequence
of $2^{r-1}-1$ rank-1 random permutations (``random swaps'' in
their terminology) the product of which is the uniform distribution
on $S_{d}$. In fact, $\binom{r}{2}$ random swap are enough. This
can be seen by the following inductive construction: let $X$ be a
uniformly random permutation in $S_{r-1}\le S_{r}$, and define 
\[
Y_{1}=\begin{cases}
\left(1\,r\right) & \frac{1}{r}\\
\mathrm{id} & \frac{r-1}{r}
\end{cases},\,\,\,\,\,Y_{2}=\begin{cases}
\left(2\,r\right) & \frac{1}{r-1}\\
\mathrm{id} & \frac{r-2}{r-1}
\end{cases},\,\,\,\,\,\ldots,\,\,\,\,\,Y_{r-1}=\begin{cases}
\left(r-1\,r\right) & \frac{1}{2}\\
\mathrm{id} & \frac{1}{2}
\end{cases}.
\]
Then $X\cdot Y_{1}\cdot\ldots\cdot Y_{r-1}$ gives a uniform distribution
on $S_{r}$.
\end{rem}

\section{On Pairs Satisfying $\left({\cal P}1\right)$ and $\left({\cal P}2\right)$
and Further Applications\label{sec:applications}}

In this Section we say a few words about pairs $\left(\Gamma,\pi\right)$
of a group and a representation satisfying properties $\left({\cal P}1\right)$
and/or $\left({\cal P}2\right)$, and elaborate on the combinatorial
applications of Theorem \ref{thm:gamma-pi-one-sided-rmnjn-covering},
alongside the existence of one-sided Ramanujan $r$-coverings as stated
in Theorem \ref{thm:Every-graph-has-one-sided-d-ram-cover}. We begin
with $\left({\cal P}2\right)$, where a complete classification is
known.

\subsection{Complex Reflection Groups}

Recall that the pair $\left(\Gamma,\pi\right)$ satisfies $\left({\cal P}2\right)$
if $\Gamma$ is finite and $\pi\left(\Gamma\right)$ is a complex
reflection group, namely generated by pseudo-reflections: elements
$A\in\mathrm{GL}_{d}\left(\mathbb{C}\right)$ of finite order with
$\mathrm{rank}\left(A-I_{d}\right)=1$. If $\pi$ is not faithful
(not injective), it factors through the faithful $\overline{\pi}\colon\nicefrac{\Gamma}{\ker\pi}\to\mathrm{GL}_{d}\left(\mathbb{C}\right)$,
and $\left(\Gamma,\pi\right)$ satisfies $\left({\cal P}2\right)$
if and only if $\left(\nicefrac{\Gamma}{\ker\pi},\overline{\pi}\right)$
does. In addition, if $\pi$ is reducible and $\left(\Gamma,\pi\right)$
satisfies $\left({\cal P}2\right)$, then for every non-trivial irreducible
component $\pi'$ of $\pi$, the pair $(\Gamma,\pi')$ satisfies $\left({\cal P}2\right)$.

Hence, the classification of pairs satisfying $\left({\cal P}2\right)$
boils down to classifying finite, irreducible complex reflection groups:
finite-order matrix groups inside $\mathrm{GL}_{d}\left(\mathbb{C}\right)$
which are generated by pseudo-reflections and have no non-zero invariant
proper subspaces of $\mathbb{C}^{d}$. This classification was established
in 1954 by Shephard and Todd:
\begin{thm}
\label{thm:shepahrd-todd}\cite{shephard1954finite} Any finite irreducible
complex reflection group $W$ is one the following:

\begin{enumerate}
\item $W\le\mathrm{GL}_{d}\left(\mathbb{C}\right)$ is isomorphic to $S_{d+1}$
$\left(d\ge2\right)$, via the standard representation of $S_{d+1}$
(see the paragraph preceding Claim \ref{claim:equivalence-of-coverings-and-std-coverings}).
\item $W=G\left(m,k,d\right)$ with $m,d\in\mathbb{Z}_{\ge2}$, $k\in\mathbb{Z}_{\ge1}$
and $k|m$. This is a generalization of signed permutations groups:
the group $G\left(m,k,d\right)\le\mathrm{GL}_{d}\left(\mathbb{C}\right)$
consists of monomial matrices (matrices with exactly one non-zero
entry in every row and every column), the non-zero entries are $m$-th
roots of unity (not necessarily primitive), and their product is a
$\frac{m}{k}$-th root of unity. This a group of order $\frac{d!\cdot m^{d}}{k}$.
For example, 
\[
\left(\begin{array}{ccc}
0 & \zeta & 0\\
0 & 0 & \zeta^{-1}\\
\zeta^{4} & 0 & 0
\end{array}\right)
\]
where $\zeta=e^{\frac{2\pi i}{6}}$, is an element of $G\left(6,2,3\right)$.
\item $W=\nicefrac{\mathbb{Z}}{m\mathbb{Z}}\le\mathrm{GL}_{1}\left(\mathbb{C}\right)$
with $m\in\mathbb{Z}_{\ge2}$, sometimes denoted $G\left(m,1,1\right)$,
is the cyclic group of order $m$ whose elements are $m$-th roots
of unity.
\item $W$ is one of $34$ exceptional finite irreducible complex reflection
groups of different dimensions $d$, $2\le d\le8$.
\end{enumerate}
\end{thm}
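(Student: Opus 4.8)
The plan is to prove the classification by induction on the rank $d=\dim V$, where $V=\mathbb{C}^{d}$ carries the representation; as the paper has already reduced the general case to the irreducible one (every finite complex reflection group being a direct product of irreducible ones acting on an orthogonal decomposition of $V$), I only need to enumerate the irreducible $W\le\mathrm{GL}_{d}(\mathbb{C})$. The base cases are $d=1,2$. For $d=1$, a finite subgroup of $\mathrm{GL}_{1}(\mathbb{C})\cong\mathbb{C}^{*}$ is cyclic of some order $m$, a generator is a pseudo-reflection, and $\mathbb{Z}/m\mathbb{Z}$ is generated by one; this is case~(3). For $d=2$, I would invoke the classical fact that a finite subgroup of $\mathrm{GL}_{2}(\mathbb{C})$ is, after conjugation, a subgroup of $U(2)$, whose image in $PU(2)\cong SO(3)$ is cyclic, dihedral, tetrahedral, octahedral, or icosahedral. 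For each of these five images one enumerates the central extensions inside $U(2)$ that are actually generated by pseudo-reflections, tracking which roots of unity appear as the nontrivial eigenvalues: the cyclic and dihedral images yield exactly the infinite family $G(m,k,2)$ (together with $S_{3}$ in its standard representation, whose image is the dihedral $D_{3}$), and the three polyhedral images yield finitely many exceptional rank-$2$ groups.

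For the inductive step ($d\ge3$), the key structural input is Steinberg's theorem that the pointwise stabilizer $W_{U}$ of any subspace $U\subseteq V$ is itself generated by the pseudo-reflections of $W$ whose fixed hyperplane contains $U$, hence is again a complex reflection group of rank $d-\dim U$. Taking $\dim U=d-1$ and $\dim U=d-2$ (and splitting the reducible parabolics into irreducible factors) shows that all rank-$1$ and rank-$2$ "local pieces" of $W$ already appear on the list being proved. One then fixes a minimal generating set of pseudo-reflections, records the rank-$2$ parabolics they generate in a Coxeter-type diagram, and argues that the connected diagrams that can arise from a \emph{finite} group are precisely those of $S_{d+1}$ (case~1), $G(m,k,d)$ (case~2), and the exceptional list (case~4).

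The main pruning tool throughout is the Chevalley--Shephard--Todd theorem: for a reflection group the invariant ring $\mathbb{C}[V]^{W}$ is polynomial on homogeneous generators of degrees $d_{1},\dots,d_{d}$ with $\prod_{i}d_{i}=|W|$ and $\sum_{i}(d_{i}-1)$ equal to the number of pseudo-reflections in $W$. Combined with explicit eigenvalue computations on candidate generators, these numerical constraints eliminate all but finitely many possibilities once the two infinite families have been accounted for, and they also pin down the invariant degrees of each surviving exceptional candidate.

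I expect case~(4) to be the genuine obstacle: there is no uniform conceptual argument producing the $34$ exceptional groups, so for each candidate diagram beyond the infinite families one must either exhibit a $W$-invariant positive-definite Hermitian form and bound $|W|$ (equivalently, compute the invariant degrees and check $\prod d_{i}$ is the group order) to show the group is finite and identify it, or show no finite such group exists — an essentially computational case analysis. This is exactly the part of the original Shephard--Todd argument that is by far the longest, and I would not expect to shortcut it; in the present paper it suffices to cite the classical result \cite{shephard1954finite}.
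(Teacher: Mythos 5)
The paper does not prove this statement at all: Theorem \ref{thm:shepahrd-todd} is quoted as a classical result and attributed to Shephard--Todd \cite{shephard1954finite}, so there is no internal proof to compare your argument against. Judged on its own terms, your proposal is a reasonable roadmap of the standard strategy (rank $1$ and $2$ via finite subgroups of $U(2)$ and the $SO(3)$ classification, Steinberg's theorem on parabolic subgroups to control local rank-$1$ and rank-$2$ pieces, Chevalley--Shephard--Todd degree constraints as the pruning tool, and an explicitly computational treatment of the exceptional groups), and you correctly identify that in the context of this paper a citation suffices.

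As a proof, however, the proposal has genuine gaps. The inductive step for $d\ge 3$ is only gestured at: unlike Coxeter groups, complex reflection groups have no canonical minimal generating set of reflections and no rigid diagram formalism, so ``record the rank-$2$ parabolics in a Coxeter-type diagram and argue that the connected diagrams arising from a finite group are precisely those of $S_{d+1}$, $G(m,k,d)$, and the exceptional list'' is a restatement of the classification rather than an argument; the actual classifications in the literature go through either the classification of finite primitive collineation groups in low dimension (Shephard--Todd, relying on Blichfeldt and Mitchell) or through root/line systems (Cohen), and either route is a substantial piece of work you have not supplied. Similarly, the rank-$2$ step --- enumerating which central extensions of the cyclic, dihedral, and polyhedral subgroups of $PU(2)$ are generated by pseudo-reflections, and matching them with $G(m,k,2)$ plus the $19$ exceptional rank-$2$ groups --- is itself a nontrivial case analysis that is asserted rather than carried out, and the $34$ exceptional groups in case~(4) are never produced. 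None of this matters for the paper, which only needs the statement; but as a self-contained proof the proposal is an outline, not a demonstration.
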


We remark that a finite complex reflection group which is conjugate
to a subgroup of $\mathrm{GL}_{d}\left(\mathbb{R}\right)$ (matrices
with real entries) is, by definition, a finite Coxeter group. All
groups listed in the theorem are finite complex reflection groups,
and all irreducible except for $G\left(2,2,2\right)$.
\begin{thm}[{Steinberg, \cite[Theorem~4.6]{geck2003reflection}}]
\label{thm:P2 =00003D=00003D> P1} If $\left(\Gamma,\pi\right)$
satisfies $\left({\cal P}2\right)$ and $\pi$ is irreducible, then
$\left(\Gamma,\pi\right)$ satisfies $\left({\cal P}1\right)$ as
well.\\
Namely, if $\Gamma$ is a finite group, $\pi\colon\Gamma\to\mathrm{GL}_{d}\left(\mathbb{C}\right)$
an irreducible representation and $\pi\left(\Gamma\right)$ is a complex
reflection group, then the exterior powers $\bigwedge^{m}\pi$, $0\le m\le d$,
are irreducible and non-isomorphic.
\end{thm}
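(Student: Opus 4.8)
The plan is to reduce the statement to a single character identity for the finite complex reflection group $W:=\pi(\Gamma)\le\mathrm{GL}_d(\mathbb{C})$, dispose of the ``non-isomorphic'' half by a short self-contained argument, and establish the identity in full for the ``irreducible'' half. First the reductions. Every $\bigwedge^m\pi$ factors through $W$ and only involves $W$ acting on $V:=\mathbb{C}^d$ by inclusion; since $\pi$ is irreducible, $W$ is an \emph{irreducible} reflection group and $V$ an irreducible $W$-module. As $W$ is finite we may take $W\le\mathrm{U}(d)$, so $\overline{\chi_{\bigwedge^m V}(w)}=\chi_{\bigwedge^m V}(w^{-1})$; then $\bigwedge^m\pi$ is irreducible iff $\langle\bigwedge^m V,\bigwedge^m V\rangle_W=1$, and $\bigwedge^i\pi\not\cong\bigwedge^j\pi$ iff $\langle\bigwedge^i V,\bigwedge^j V\rangle_W=0$. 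Since $\chi_{\bigwedge^m V}(w)$ is the coefficient of $s^m$ in $\det(I+sw)$, the whole theorem is equivalent to the identity
\[
\frac{1}{|W|}\sum_{w\in W}\det(I+sw)\,\det(I+tw^{-1})\;=\;\sum_{k=0}^{d}(st)^{k},
\]
whose left-hand side is $\sum_{i,j}\langle\bigwedge^i V,\bigwedge^j V\rangle_W\,s^i t^j$.

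For the non-isomorphism half I would record a short argument that uses the reflection-group hypothesis only through two classical structure theorems. By the Chevalley--Shephard--Todd theorem, $\mathbb{C}[V]^W=\mathbb{C}[f_1,\ldots,f_d]$ is a polynomial ring with $\deg f_i=d_i$, and irreducibility of $V$ forces $d_i\ge2$ for all $i$ (there are no nonzero linear invariants, since $(V^*)^W=0$). By Solomon's theorem, the graded algebra $(\mathbb{C}[V]\otimes\bigwedge^\bullet V^*)^W$ of invariant polynomial differential forms is a free $\mathbb{C}[V]^W$-module on the wedges $df_{i_1}\wedge\cdots\wedge df_{i_k}$, the one indexed by $S$ having polynomial degree $\sum_{i\in S}(d_i-1)$. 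Reading off Hilbert series, the degree-$m$ part of $(\mathbb{C}[V]\otimes\bigwedge^k V^*)^W$ has dimension $\langle\mathrm{Sym}^m V^*,\bigwedge^k V\rangle$, and one obtains that the fake degree of $\bigwedge^k\pi$ in the coinvariant algebra equals $e_k\!\left(q^{d_1-1},\ldots,q^{d_d-1}\right)$. Ordering $d_1\le\cdots\le d_d$, the lowest-degree term of this polynomial is $q^{(d_1-1)+\cdots+(d_k-1)}$; since every $d_i-1\ge1$, these exponents are strictly increasing in $k$, so the fake degrees of $\bigwedge^0\pi,\ldots,\bigwedge^d\pi$ are pairwise distinct polynomials, hence the representations are pairwise non-isomorphic. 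Note this step did not presuppose their irreducibility.

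For irreducibility one must prove the displayed identity itself. I would continue the Hilbert-series bookkeeping: via the Chevalley isomorphism $\mathbb{C}[V]\cong\mathbb{C}[V]^W\otimes_{\mathbb{C}}\mathbb{C}[V]_W$ of graded $W$-modules, for any $W$-module $N$ one has $\mathrm{Hilb}\big((\mathbb{C}[V]\otimes N)^W\big)(q)=\big(\prod_i(1-q^{d_i})\big)^{-1}\sum_\chi\langle\chi,N^*\rangle R_\chi(q)$, where $R_\chi$ is the fake degree and $R_\chi(0)=\langle\chi,\mathrm{triv}\rangle$, while Molien's formula computes the same series as an average over $W$. Taking $N=\bigwedge^\bullet V^*\otimes\bigwedge^\bullet V$ (tracked by two exterior variables $s,t$), multiplying by $\prod_i(1-q^{d_i})$ and setting $q=0$ collapses the polynomial directions and leaves $\sum_{i,j}\langle\bigwedge^i V,\bigwedge^j V\rangle s^i t^j$ on both sides; to extract content one therefore needs an \emph{a priori} $\mathbb{C}[V]^W$-basis of the full module $(\mathbb{C}[V]\otimes\bigwedge^\bullet V^*\otimes\bigwedge^\bullet V)^W$ --- Solomon's forms $df_S$ in the $\bigwedge^\bullet V^*$ direction, the invariant polynomial polyvector fields $(\mathbb{C}[V]\otimes\bigwedge^\bullet V)^W$ (free, with degrees governed by the coexponents of $W$) in the $\bigwedge^\bullet V$ direction, together with the Sym-degree-$0$ ``trace'' elements $\mathrm{id}_{\bigwedge^k V}\in\bigwedge^k V^*\otimes\bigwedge^k V$. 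Matching such a basis in Sym-degree $0$ is what pins down $\sum_{i,j}\langle\bigwedge^i V,\bigwedge^j V\rangle s^i t^j=\sum_{k}(st)^{k}$.

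The main obstacle is exactly this last step: the clean assertion that $(\mathbb{C}[V]\otimes\bigwedge^\bullet V^*\otimes\bigwedge^\bullet V)^W$ is $\mathbb{C}[V]^W$-free with an explicitly describable basis is precisely where the reflection-group hypothesis does its real work, and its proof is more delicate than the Hilbert-series manipulations around it; this is the point I would ultimately attribute to Steinberg (as in \cite[Thm~4.6]{geck2003reflection}). If one wants an argument using only material already present here, Theorem~\ref{thm:shepahrd-todd} reduces the displayed identity to finitely many cases: $\mathbb{Z}/m\mathbb{Z}$ on $\mathbb{C}^1$ (where $\bigwedge^0,\bigwedge^1$ are the trivial and a nontrivial character --- trivially irreducible and distinct); the groups $S_{d+1}$ through $\mathrm{std}$, already handled by Fact~\ref{fact:std_satisfies_P1_and_P2}; the infinite family $G(m,k,d)$ of monomial matrices, where the identity is a direct computation with the cycle structure of monomial permutation matrices; and the $34$ exceptional groups, a finite check. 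Either way one concludes that $\left({\cal P}2\right)$ together with irreducibility of $\pi$ implies $\left({\cal P}1\right)$.
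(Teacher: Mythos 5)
The paper gives no proof of this statement at all: it is quoted as a classical theorem of Steinberg, with the citation to \cite[Thm~4.6]{geck2003reflection} standing in for the argument. So the honest benchmark is whether your write-up actually supplies a proof, and it does so only in part. Your reduction to the identity $\frac{1}{|W|}\sum_{w\in W}\det(I+sw)\det(I+tw^{-1})=\sum_{k=0}^{d}(st)^{k}$ is correct and is the standard equivalent formulation, and your fake-degree argument for the pairwise non-isomorphism half is sound granted the Chevalley--Shephard--Todd theorem and Solomon's computation $R_{\bigwedge^{k}\pi}(q)=e_{k}\left(q^{d_{1}-1},\ldots,q^{d_{d}-1}\right)$ (external classical inputs, which is defensible given that the paper cites the whole theorem anyway): irreducibility of $\pi$ forces all $d_{i}\ge2$, the valuations $\sum_{i\le k}(d_{i}-1)$ are strictly increasing in $k$, so the fake degrees are pairwise distinct and the exterior powers pairwise non-isomorphic.

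The genuine gap is the irreducibility half, i.e.\ the diagonal entries $\bigl\langle\bigwedge^{k}V,\bigwedge^{k}V\bigr\rangle=1$, which is the heart of Steinberg's theorem. As you yourself note, the Hilbert-series manipulation is circular at $q=0$ unless one has an a priori basis theorem for $\bigl(\mathbb{C}[V]\otimes\bigwedge^{\bullet}V^{*}\otimes\bigwedge^{\bullet}V\bigr)^{W}$; note that freeness over $\mathbb{C}[V]^{W}$ is automatic (since $\mathbb{C}[V]\cong\mathbb{C}[V]^{W}\otimes\mathbb{C}[V]_{W}$ as graded $W$-modules), so the real content is an explicit description of the degrees of a generating set, and that mixed-forms statement is essentially equivalent to the theorem you are trying to prove --- at which point you are citing the very result in question, exactly as the paper does. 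The fallback via the classification (Theorem \ref{thm:shepahrd-todd}) is also not a proof as written: only the $S_{d+1}$ and rank-one cases are actually covered by material in the paper (Fact \ref{fact:std_satisfies_P1_and_P2}), while the $G(m,k,d)$ family --- where one must control how exterior powers of the reflection representation behave under restriction from $G(m,1,d)$ to the index-$k$ subgroup, and restrictions of irreducibles can split --- and the $34$ exceptional groups are merely asserted to be checkable. So your proposal proves the non-isomorphism statement but leaves irreducibility resting on the same citation the paper uses; it is a useful partial argument, not a complete proof.
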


It is evident that if $\pi$ is reducible, the pair $\left(\Gamma,\pi\right)$
does not satisfy $\left({\cal P}1\right)$. Thus,
\begin{cor}
\label{cor:pairs with both P1 and P2}The pairs $\left(\Gamma,\pi\right)$
satisfying both $\left({\cal P}1\right)$ and $\left({\cal P}2\right)$
are precisely the irreducible finite complex reflection groups\footnote{To be precise, this is true for \textit{\emph{faithful}}\textit{ representations.
If $\pi$ factors through $\overline{\pi}\colon\nicefrac{\Gamma}{\ker\pi}\to\mathrm{GL}_{d}\left(\mathbb{C}\right)$,
then $\left(\Gamma,\pi\right)$ satisfies $\left({\cal P}1\right)$
and $\left({\cal P}2\right)$ if and only if $\left(\nicefrac{\Gamma}{\ker\pi},\overline{\pi}\right)$
does.}}. 
\end{cor}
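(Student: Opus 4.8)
The final statement to prove is Corollary~\ref{cor:pairs with both P1 and P2}: for a faithful representation $\pi$, the pair $\left(\Gamma,\pi\right)$ satisfies both $\left({\cal P}1\right)$ and $\left({\cal P}2\right)$ precisely when $\pi\left(\Gamma\right)$ is a finite irreducible complex reflection group.

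The plan is to prove the two implications separately, using the two theorems of Steinberg and Shephard--Todd that are quoted just above. First I would handle the easy direction: if $\left(\Gamma,\pi\right)$ satisfies both $\left({\cal P}1\right)$ and $\left({\cal P}2\right)$, then by definition of $\left({\cal P}2\right)$ the image $\pi\left(\Gamma\right)$ is a finite complex reflection group, so it only remains to check irreducibility. But $\bigwedge^{1}\pi=\pi$ itself, and $\left({\cal P}1\right)$ demands that every exterior power $\bigwedge^{m}\pi$, in particular $\bigwedge^{1}\pi$, be irreducible; hence $\pi$ is irreducible. Since $\pi$ is assumed faithful, $\Gamma\cong\pi\left(\Gamma\right)$ and the latter is therefore a finite irreducible complex reflection group. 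This is the observation the text already flags with ``It is evident that if $\pi$ is reducible, the pair $\left(\Gamma,\pi\right)$ does not satisfy $\left({\cal P}1\right)$.''

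For the converse, suppose $\pi\left(\Gamma\right)\le\mathrm{GL}_{d}\left(\mathbb{C}\right)$ is a finite irreducible complex reflection group and $\pi$ is faithful. Then by the very definitions, $\pi$ is irreducible and $\pi\left(\Gamma\right)$ is generated by pseudo-reflections, so $\left(\Gamma,\pi\right)$ satisfies $\left({\cal P}2\right)$. Now apply Theorem~\ref{thm:P2 =00003D=00003D> P1} (Steinberg): because $\left(\Gamma,\pi\right)$ satisfies $\left({\cal P}2\right)$ and $\pi$ is irreducible, it also satisfies $\left({\cal P}1\right)$, i.e. all exterior powers $\bigwedge^{m}\pi$ for $0\le m\le d$ are irreducible and pairwise non-isomorphic. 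This closes the equivalence. I should also note, as the footnote already does, that the faithfulness hypothesis is what lets one pass between $\Gamma$ and $\pi\left(\Gamma\right)$; in the non-faithful case one replaces $\Gamma$ by $\nicefrac{\Gamma}{\ker\pi}$ and $\pi$ by the induced faithful $\overline{\pi}$, and both $\left({\cal P}1\right)$ and $\left({\cal P}2\right)$ depend only on the image $\pi\left(\Gamma\right)=\overline{\pi}\bigl(\nicefrac{\Gamma}{\ker\pi}\bigr)$, so the reduction is harmless.

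There is essentially no obstacle here: the corollary is a bookkeeping consequence of the two cited structural theorems together with the trivial remarks that $\bigwedge^{1}\pi=\pi$ and that $\left({\cal P}2\right)$ is by fiat the statement ``$\pi\left(\Gamma\right)$ is a complex reflection group.'' The only point requiring a sentence of care is verifying that \emph{irreducibility of the reflection group} is equivalent to \emph{irreducibility of $\pi$} as a representation, which is immediate since a reflection group $W\le\mathrm{GL}_{d}\left(\mathbb{C}\right)$ being irreducible means exactly that $\mathbb{C}^{d}$ has no proper nonzero $W$-invariant subspace, i.e. the tautological representation $W\hookrightarrow\mathrm{GL}_{d}\left(\mathbb{C}\right)$ is an irreducible representation. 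So the whole proof is: $\left({\cal P}1\right)\Rightarrow\pi$ irreducible; combine with $\left({\cal P}2\right)$ to get ``finite irreducible complex reflection group''; conversely ``finite irreducible complex reflection group'' gives $\left({\cal P}2\right)$ plus irreducibility of $\pi$, and then Steinberg's theorem upgrades this to $\left({\cal P}1\right)$.
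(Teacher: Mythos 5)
Your proof is correct and follows essentially the same route as the paper: the forward direction is the paper's remark that a reducible $\pi$ (here $\bigwedge^{1}\pi=\pi$) cannot satisfy $\left({\cal P}1\right)$ combined with the definition of $\left({\cal P}2\right)$, and the converse is exactly the invocation of Steinberg's theorem (Theorem \ref{thm:P2 =00003D=00003D> P1}), with the faithfulness reduction handled as in the paper's footnote. Nothing is missing.
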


We conclude:
\begin{cor}
\label{cor:P2 alone}Let $\left(\Gamma,\pi\right)$ satisfy $\left({\cal P}2\right)$
such that $\pi$ has no trivial component in its decomposition to
irreducibles, and let $G$ be a finite graph with no loops. Then,

\begin{itemize}
\item If $d_{1},\ldots,d_{r}$ are the dimensions of the irreducible components
of $\pi$, then 
\[
\mathbb{E}_{\gamma\in{\cal C}_{\Gamma,G}}\left[\phi_{\gamma,\pi}\right]=\M_{d_{1},G}\cdot\ldots\cdot\M_{d_{r},G}.
\]
\item $\mathbb{E}_{\gamma\in{\cal C}_{\Gamma,G}}\left[\phi_{\gamma,\pi}\right]$
is real rooted and there is some labeling with smaller largest root.
\item There is a one-sided Ramanujan $\left(\Gamma,\pi\right)$-covering
of $G$.
\end{itemize}
\end{cor}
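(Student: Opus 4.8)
The plan is to assemble the three claims from the structural description of pairs satisfying $\left({\cal P}2\right)$ given at the start of Section~\ref{sec:applications}, together with Theorems~\ref{thm:P1}, \ref{thm:P2}, \ref{thm:P2 =00003D=00003D> P1} and \ref{thm:M_d,G is Ramanujan}. First I would reduce to the case that $\pi$ is faithful: if $\pi$ factors through $\overline{\pi}\colon\nicefrac{\Gamma}{\ker\pi}\to\mathrm{GL}_d\left(\mathbb{C}\right)$, then $A_{\gamma,\pi}$ depends on $\gamma$ only through $\overline{\pi}$, the pushforward of the uniform measure on ${\cal C}_{\Gamma,G}$ along the quotient map is the uniform measure on ${\cal C}_{\nicefrac{\Gamma}{\ker\pi},G}$, and passing from $\left(\Gamma,\pi\right)$ to $\left(\nicefrac{\Gamma}{\ker\pi},\overline{\pi}\right)$ changes neither $\mathbb{E}_{\gamma}\left[\phi_{\gamma,\pi}\right]$, nor the validity of $\left({\cal P}2\right)$, nor the set of irreducible components (hence nor the $d_i$), nor the set of $\left(\Gamma,\pi\right)$-coverings and their spectra. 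Assuming $\pi$ faithful, I would iterate the quoted splitting --- a faithful reducible pair satisfying $\left({\cal P}2\right)$ decomposes as $\left(\Gamma_1\times\Gamma_2,\,\left(\pi_1,1\right)\oplus\left(1,\pi_2\right)\right)$ with both summands satisfying $\left({\cal P}2\right)$ --- to obtain $\Gamma\cong\Gamma_1\times\cdots\times\Gamma_r$ and $\pi\cong\pi_1\oplus\cdots\oplus\pi_r$ with each $\pi_i$ irreducible and $\pi_i\left(\Gamma_i\right)$ a complex reflection group. Faithfulness of $\pi$ forces each $\pi_i$ to be faithful, so the iteration is legitimate, and by uniqueness of the decomposition into irreducibles the dimensions $d_i=\dim\left(\pi_i\right)$ are exactly those of the irreducible components of $\pi$.

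For the first bullet, note that a $\Gamma$-labeling of $G$ with $\Gamma=\prod_i\Gamma_i$ is the same data as an independent tuple of $\Gamma_i$-labelings (the constraint $\gamma\left(-e\right)=\gamma\left(e\right)^{-1}$ decomposes coordinatewise), so ${\cal C}_{\Gamma,G}$ is the product of the spaces ${\cal C}_{\Gamma_i,G}$; and, as in the proof of Claim~\ref{claim:equivalence-of-coverings-and-std-coverings} and the discussion preceding this corollary, there is a fixed permutation of the $nd$ coordinates after which $A_{\gamma,\pi}=\bigoplus_iA_{\gamma_i,\pi_i}$, whence $\phi_{\gamma,\pi}=\prod_i\phi_{\gamma_i,\pi_i}$. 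By independence, $\mathbb{E}_{\gamma}\left[\phi_{\gamma,\pi}\right]=\prod_i\mathbb{E}_{\gamma_i}\left[\phi_{\gamma_i,\pi_i}\right]$. Theorem~\ref{thm:P2 =00003D=00003D> P1} (Steinberg) shows each irreducible $\left(\Gamma_i,\pi_i\right)$ satisfying $\left({\cal P}2\right)$ also satisfies $\left({\cal P}1\right)$, so Theorem~\ref{thm:P1} gives $\mathbb{E}_{\gamma_i}\left[\phi_{\gamma_i,\pi_i}\right]=\M_{d_i,G}$; multiplying yields $\mathbb{E}_{\gamma}\left[\phi_{\gamma,\pi}\right]=\M_{d_1,G}\cdots\M_{d_r,G}$.

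The second bullet is Theorem~\ref{thm:P2} applied directly to $\left(\Gamma,\pi\right)$: since it satisfies $\left({\cal P}2\right)$ and $G$ is loopless, $\mathbb{E}_{\gamma}\left[\phi_{\gamma,\pi}\right]$ is real rooted and there is a $\left(\Gamma,\pi\right)$-covering whose largest eigenvalue is at most the largest root of $\mathbb{E}_{\gamma}\left[\phi_{\gamma,\pi}\right]$. For the third bullet, combine the first two: by Theorem~\ref{thm:M_d,G is Ramanujan} each $\M_{d_i,G}$ is real rooted with all roots in $\left[-\rho\left(G\right),\rho\left(G\right)\right]$, hence so is $\M_{d_1,G}\cdots\M_{d_r,G}=\mathbb{E}_{\gamma}\left[\phi_{\gamma,\pi}\right]$, so its largest root is at most $\rho\left(G\right)$; the covering furnished by the second bullet then has all eigenvalues at most $\rho\left(G\right)$, i.e.\ it is a one-sided Ramanujan $\left(\Gamma,\pi\right)$-covering.

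The only genuinely delicate point is the reduction bookkeeping in the first paragraph: verifying that passing to $\nicefrac{\Gamma}{\ker\pi}$ and peeling off direct factors is compatible with the covering model ${\cal C}_{\Gamma,G}$ and with the notion of a $\left(\Gamma,\pi\right)$-covering, and that faithfulness and irreducibility descend to each $\left(\Gamma_i,\pi_i\right)$ so that Steinberg's theorem genuinely applies to every factor. Everything else is a direct appeal to results already proved.
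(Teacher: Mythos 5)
Your proposal is correct and follows essentially the same route as the paper's (largely implicit) argument: reduce to a faithful $\pi$, split $\left(\Gamma,\pi\right)$ into a direct product of irreducible complex reflection pairs, use Steinberg's theorem so that Theorem \ref{thm:P1} applies to each factor (giving the product of $d_i$-matching polynomials via independence of the factor labelings), and then invoke Theorems \ref{thm:P2} and \ref{thm:M_d,G is Ramanujan} for real-rootedness, the existence of a good labeling, and the Ramanujan bound. The extra bookkeeping you supply (pushforward of the uniform measure under the quotient, faithfulness descending to the factors, block-diagonal form of $A_{\gamma,\pi}$) is exactly what the paper leaves to the reader and is verified correctly.
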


\begin{proof}
Assume that $\pi\cong\pi_{1}\oplus\ldots\oplus\pi_{r}$ where $\pi_{i}$
is a non-trivial irreducible representation of dimension $d_{i}$.
Clearly, every pseudo-reflection in $\pi(\Gamma)$ is the identity
element in all but one of the $\pi_{i}(\Gamma)$, and the pseudo-reflections
associated with $\pi_{j}(\Gamma)$ generate $\pi_{j}(\Gamma)$. For
$i=1,\ldots,r$, denote $\Gamma_{i}=\nicefrac{\Gamma}{\ker\pi_{i}}$
and by $\overline{\pi_{i}}\colon\Gamma_{i}\to\mathrm{GL}_{d_{i}}\left(\mathbb{C}\right)$
the associated representation. Then $\Gamma=\Gamma_{1}\times\ldots\times\Gamma_{r}$
and $\pi=(\overline{\pi_{1}},1,\ldots,1)+\ldots+(1,\ldots,1,\overline{\pi_{r}})$.
Constructing a $\left(\Gamma,\pi\right)$-covering of a graph is equivalent
to constructing $r$ independent coverings, one for $\left(\Gamma_{i},\overline{\pi_{i}}\right)$
for each $i$. The corollary now follows from Corollary \ref{cor:pairs with both P1 and P2}
and Theorems \ref{thm:P1} and \ref{thm:P2}.
\end{proof}

\subsection{Pairs Satisfying $\left({\cal P}1\right)$}

The list in Theorem \ref{thm:shepahrd-todd} does not exhaust all
pairs $\left(\Gamma,\pi\right)$ (with $\pi$ faithful) satisfying
$\left({\cal P}1\right)$. Even when restricting to finite groups,
there are pairs satisfying $\left({\cal P}1\right)$ but not $\left({\cal P}2\right)$.
A handful of such examples arises from the observation that $\left({\cal P}1\right)$
is preserved by passing to bigger groups:
\begin{claim}
\label{claim: P1 preserved by supergroups}Let $\Gamma$ be a group,
$\pi\colon\Gamma\to\mathrm{GL}_{d}\left(\mathbb{C}\right)$ a representation
and $H\le\Gamma$ a subgroup. If $\left(H,\pi|_{H}\right)$ satisfies
$\left({\cal P}1\right)$ then so does $\left(\Gamma,\pi\right)$.
\end{claim}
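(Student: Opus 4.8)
The plan is to reduce the statement to two elementary facts about restriction of representations: that exterior powers commute with restriction to a subgroup, and that if the restriction of a representation to a subgroup is irreducible, then the representation itself is irreducible.

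First I would record that for every $m$ with $0\le m\le d$, the restriction $\left(\bigwedge^{m}\pi\right)\big|_{H}$ is literally the same representation as $\bigwedge^{m}\!\left(\pi|_{H}\right)$: both act on the space $\bigwedge^{m}\mathbb{C}^{d}$, and for $g\in H$ the action $g.(v_{1}\wedge\cdots\wedge v_{m})=(g.v_{1})\wedge\cdots\wedge(g.v_{m})$ is insensitive to whether $g$ is regarded as an element of $H$ or of $\Gamma$. Thus the hypothesis that $(H,\pi|_{H})$ satisfies $\left({\cal P}1\right)$ says precisely that each $\left(\bigwedge^{m}\pi\right)\big|_{H}$, $0\le m\le d$, is an irreducible $H$-representation and that these $d+1$ representations are pairwise non-isomorphic as $H$-representations.

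Next I would transfer each half of $\left({\cal P}1\right)$ upward. For irreducibility: any $\Gamma$-invariant subspace $W\le\bigwedge^{m}\mathbb{C}^{d}$ is in particular $H$-invariant, so irreducibility of $\left(\bigwedge^{m}\pi\right)\big|_{H}$ forces $W\in\{0,\bigwedge^{m}\mathbb{C}^{d}\}$; hence $\bigwedge^{m}\pi$ is irreducible. For non-isomorphism: if $\bigwedge^{m}\pi\cong\bigwedge^{m'}\pi$ as $\Gamma$-representations for some $0\le m<m'\le d$ — witnessed by some $B\in\mathrm{GL}\!\left(\bigwedge^{m}\mathbb{C}^{d}\right)$ intertwining them (which also forces $\binom{d}{m}=\binom{d}{m'}$) — then the very same $B$ intertwines the restrictions, giving $\left(\bigwedge^{m}\pi\right)\big|_{H}\cong\left(\bigwedge^{m'}\pi\right)\big|_{H}$ and contradicting $\left({\cal P}1\right)$ for $(H,\pi|_{H})$. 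Hence all the $\bigwedge^{m}\pi$, $0\le m\le d$, are irreducible and pairwise non-isomorphic, i.e.~$(\Gamma,\pi)$ satisfies $\left({\cal P}1\right)$.

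There is no genuine obstacle here: the statement is a formality once one observes that exterior powers are defined pointwise in the group element and therefore commute with restriction. The only point to handle with a little care is that $\left({\cal P}1\right)$ is a conjunction of two conditions — irreducibility and pairwise non-isomorphism — each of which must be pushed from the restrictions to the ambient group, and that when two exterior powers are isomorphic they automatically have equal dimension, so the comparison of $\bigwedge^{m}\pi$ with $\bigwedge^{d-m}\pi$ (which always share the dimension $\binom{d}{m}$) is genuinely covered by the intertwiner argument rather than by any dimension count.
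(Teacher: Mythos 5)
Your proof is correct and is essentially the paper's own argument: irreducibility of each $\bigwedge^{m}\pi$ follows because a $\Gamma$-invariant subspace is in particular $H$-invariant, and any isomorphism between two exterior powers of $\pi$ restricts to an isomorphism of the corresponding exterior powers of $\pi|_{H}$, contradicting $\left({\cal P}1\right)$ for $\left(H,\pi|_{H}\right)$. Your write-up just spells out (correctly) the identification $\left(\bigwedge^{m}\pi\right)\big|_{H}=\bigwedge^{m}\left(\pi|_{H}\right)$ and treats arbitrary pairs $m<m'$ rather than only the pair $m,d-m$ singled out in the paper, which is a harmless elaboration rather than a different route.
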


\begin{proof}
It is clear that if $\bigwedge^{m}\pi$ cannot have an invariant proper
subspace if $\left(\bigwedge^{m}\pi\right)|_{H}$ has none. An isomorphism
of $\bigwedge^{m}\pi$ and $\bigwedge^{d-m}\pi$ induces an isomorphism
on the same representation restricted to $H$.
\end{proof}
For example, we can increase $\mathrm{std}\left(S_{r}\right)$ by
adding some scalar matrix of finite order $m$ as an extra generator,
and obtain a $d$-dimensional faithful representation of $S_{d+1}\times\nicefrac{\mathbb{Z}}{m\mathbb{Z}}$
which satisfies $\left({\cal P}1\right)$. 

There are also pairs with $\Gamma$ finite which do not contain any
complex reflection group. For instance, consider the index-2 subgroup
$\Gamma$ of $G\left(2,1,3\right)$ where we restrict to \emph{even}
permutation $3\times3$ matrices with $\pm1$ signing of every non-zero
entry. The natural $3$-dimensional representation of this group satisfies
$\left({\cal P}1\right)$, but does not contain any complex reflection
group. We are not aware of a full classification of pairs $\left(\Gamma,\pi\right)$
satisfying $\left({\cal P}1\right)$, even when $\Gamma$ is finite.

There are some interesting examples of pairs $\left(\Gamma,\pi\right)$
satisfying $\left({\cal P}1\right)$ where $\Gamma$ is infinite and
compact. For example, the standard representation $\pi$ of the orthogonal
group $O\left(d\right)$ or of the unitary group $U\left(d\right)$,
is such (by, e.g., Claim \ref{claim: P1 preserved by supergroups}
and the fact one can identify $\mathrm{std}\left(S_{d+1}\right)$
as a subgroup of $O\left(d\right)$ or of $U\left(d\right)$).
\begin{cor}
\label{cor:O(d) and U(d)}Let $\Gamma=O\left(d\right)$ or $\Gamma=U\left(d\right)$,
and let $\pi$ be the standard $d$-dimensional representation. Then,
for every finite graph $G$,
\[
\mathbb{E}_{\gamma\in{\cal C}_{\Gamma,G}}\left[\phi_{\gamma,\pi}\right]=\M_{d,G}.
\]
\end{cor}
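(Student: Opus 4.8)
The proof will be short: it is a direct application of Theorem \ref{thm:P1} in its compact-group form, once we know that the defining representation $\pi$ of $\Gamma = O\left(d\right)$ (resp.\ $\Gamma = U\left(d\right)$) satisfies Property $\left({\cal P}1\right)$. First I would record that $\pi$ is unitary in both cases (orthogonal matrices are unitary), so Claim \ref{claim: real spectrum} applies and the probability space ${\cal C}_{\Gamma,G}$ is the one described in Remark \ref{remark: compact}, namely $\Gamma^{E^{+}\left(G\right)}$ endowed with the Haar measure. As noted there, the proof of Theorem \ref{thm:P1} in Section \ref{sec:P1} uses only the identity $A_{\gamma,\pi}\left(-e\right) = A_{\gamma,\pi}\left(e\right)^{*}$ together with the Peter--Weyl theorem (Theorem \ref{thm:peter-weyl}), which holds for compact groups; nothing in that argument uses finiteness of $\Gamma$ beyond these, so \eqref{eq:thmP1} holds verbatim for compact $\Gamma$ with unitary $\pi$ satisfying $\left({\cal P}1\right)$, and its right-hand side is $\M_{d,G}$ by Definition \ref{def:d-matching-poly}.

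It remains to verify $\left({\cal P}1\right)$ for $\pi$, which I would do via Claim \ref{claim: P1 preserved by supergroups}. The standard $d$-dimensional representation $\mathrm{std}$ of $S_{d+1}$ is realized by permutation matrices acting on $\left\langle \mathbf{1}\right\rangle ^{\bot}\subseteq\mathbb{C}^{d+1}$; these matrices preserve the standard (real, positive definite) inner product, so $\mathrm{std}\left(S_{d+1}\right)$ is conjugate to a subgroup of $O\left(d\right)$, hence also of $U\left(d\right)$, and under this inclusion the defining representation of $O\left(d\right)$ (resp.\ $U\left(d\right)$) restricts exactly to $\mathrm{std}$. By Fact \ref{fact:std_satisfies_P1_and_P2} the pair $\left(S_{d+1},\mathrm{std}\right)$ satisfies $\left({\cal P}1\right)$; applying Claim \ref{claim: P1 preserved by supergroups} with $H = \mathrm{std}\left(S_{d+1}\right)$ inside $\Gamma = O\left(d\right)$, and then with $H = \mathrm{std}\left(S_{d+1}\right)$ inside $\Gamma = U\left(d\right)$, shows that both $\left(O\left(d\right),\pi\right)$ and $\left(U\left(d\right),\pi\right)$ satisfy $\left({\cal P}1\right)$.

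Combining the two steps, Theorem \ref{thm:P1} gives $\mathbb{E}_{\gamma\in{\cal C}_{\Gamma,G}}\left[\phi_{\gamma,\pi}\right] = \mathbb{E}_{H\in{\cal C}_{d,G}}\M_{H}\left(x\right) = \M_{d,G}\left(x\right)$ for $\Gamma\in\left\{ O\left(d\right),U\left(d\right)\right\}$, as claimed. I do not anticipate a genuine obstacle: the only point deserving a line of care is the passage of Theorem \ref{thm:P1} to compact groups, which is precisely what Remark \ref{remark: compact} asserts and which the proof in Section \ref{sec:P1} supports line by line. If one preferred to avoid the supergroup trick, one could check $\left({\cal P}1\right)$ directly --- the $\bigwedge^{m}\mathbb{C}^{d}$ for $0\le m\le d$ are the fundamental representations of $U\left(d\right)$, pairwise non-isomorphic and irreducible, and for $O\left(d\right)$ one has $\bigwedge^{m}\mathbb{C}^{d}\cong\bigwedge^{d-m}\mathbb{C}^{d}\otimes\det$ with $\det$ the nontrivial character, the remaining non-isomorphisms following from a dimension count together with the value of a character at a reflection --- but routing through Claim \ref{claim: P1 preserved by supergroups} is cleaner and avoids the $d=2m$ case for $O\left(d\right)$.
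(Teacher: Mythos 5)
Your proposal is correct and follows essentially the same route as the paper: the paper also verifies $\left({\cal P}1\right)$ for the defining representation of $O\left(d\right)$ and $U\left(d\right)$ via Claim \ref{claim: P1 preserved by supergroups} applied to the embedded copy of $\mathrm{std}\left(S_{d+1}\right)$ (using Fact \ref{fact:std_satisfies_P1_and_P2}), and then invokes the compact-group form of Theorem \ref{thm:P1} provided by Remark \ref{remark: compact}, whose proof via Peter--Weyl works verbatim for compact $\Gamma$ with unitary $\pi$. No gap to report.
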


\subsection{\label{subsec:Applications-of-Theorem about gamma-pi-coverings}Applications
of Theorem \ref{thm:gamma-pi-one-sided-rmnjn-covering}}

In this section we elaborate the combinatorial consequences of Theorem
\ref{thm:gamma-pi-one-sided-rmnjn-covering} stating that if $\left(\Gamma,\pi\right)$
satisfies both $\left({\cal P}1\right)$ and $\left({\cal P}2\right)$,
then there is a one-sided Ramanujan $\left(\Gamma,\pi\right)$-covering
of $G$ whenever $G$ is finite with no loops. Corollary \ref{cor:pairs with both P1 and P2}
tells us exactly what pairs satisfy the conditions of the theorem.
The most interesting consequence, based on the pair $\left(S_{r},std\right)$,
was already stated as Theorem \ref{thm:Every-graph-has-one-sided-d-ram-cover}:
every $G$ as above has a one-sided Ramanujan $r$-covering for every
$r$. 

Another interesting application stems from one-dimensional representations
(item $\left(3\right)$ in Theorem \ref{thm:shepahrd-todd}):
\begin{cor}
\label{cor:1-dim coverings}For every $m\in\mathbb{Z}_{\ge2}$ and
every loopless\footnote{In this special case it is actually possible to prove the result even
for graphs with loops: the proof of Proposition \ref{prop:avg-char-poly-of-random-coverings-is-real-rooted}
does not break.} finite graph $G$, there is a labeling of the oriented edges of $G$
by $m$-th roots of unity (with $\gamma\left(-e\right)=\gamma\left(e\right)^{-1}$,
as usual), such that the resulting spectrum is one-sided Ramanujan.
\end{cor}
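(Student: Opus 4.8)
The plan is to realize the desired edge-labelings as $\left(\Gamma,\pi\right)$-coverings for a suitable pair $\left(\Gamma,\pi\right)$ and then invoke Theorem~\ref{thm:gamma-pi-one-sided-rmnjn-covering}. Concretely, fix a primitive $m$-th root of unity $\zeta$ and take $\Gamma=\mu_m$, the cyclic group of $m$-th roots of unity, together with $\pi\colon\Gamma\hookrightarrow\mathrm{GL}_{1}\left(\mathbb{C}\right)\cong\mathbb{C}^{*}$ the natural (faithful) one-dimensional representation. With this choice, a $\Gamma$-labeling $\gamma\colon E\left(G\right)\to\Gamma$ with $\gamma\left(-e\right)=\gamma\left(e\right)^{-1}$ is exactly a labeling of the oriented edges of $G$ by $m$-th roots of unity as in the statement; the matrix $A_{\gamma,\pi}$ of Definition~\ref{def:gamma-pi-coverings} is the $n\times n$ matrix obtained from $A_{G}$ by replacing the $\left(u,v\right)$-entry with $\sum_{e\colon u\to v}\gamma\left(e\right)$; and ``one-sided Ramanujan'' for this covering means precisely that every eigenvalue of $A_{\gamma,\pi}$ is at most $\rho\left(G\right)$, i.e.\ that the resulting spectrum is one-sided Ramanujan in the sense of the statement.

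It then remains to verify that $\left(\Gamma,\pi\right)$ satisfies both $\left({\cal P}1\right)$ and $\left({\cal P}2\right)$. For $\left({\cal P}1\right)$: since $\dim\pi=1$, the only exterior powers are $\bigwedge^{0}\pi=\mathrm{triv}$ and $\bigwedge^{1}\pi=\pi$, both of which are (trivially) irreducible; they are non-isomorphic because $\pi$ is non-trivial, $\pi$ sending a generator of $\Gamma$ to $\zeta\ne 1$. This is exactly the one-dimensional case of $\left({\cal P}1\right)$ recorded right after Definition~\ref{def:P1}. For $\left({\cal P}2\right)$: the image $\pi\left(\Gamma\right)=\langle\zeta\rangle\le\mathrm{GL}_{1}\left(\mathbb{C}\right)$ is generated by $\zeta$, and $\zeta$ is a pseudo-reflection, since it has finite order $m$ and, as a $1\times 1$ matrix, $\mathrm{rank}\left(\zeta-I_{1}\right)=1$ because $\zeta\ne 1$. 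Hence $\pi\left(\Gamma\right)$ is a complex reflection group --- indeed it is item $\left(3\right)$ of the Shephard--Todd classification, Theorem~\ref{thm:shepahrd-todd} --- so $\left({\cal P}2\right)$ holds.

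With both properties in hand, Theorem~\ref{thm:gamma-pi-one-sided-rmnjn-covering} yields, for every connected loopless graph, a one-sided Ramanujan $\left(\Gamma,\pi\right)$-covering, which is the required labeling. For a general, possibly disconnected, loopless $G$ one applies this to each connected component and unions the labelings; by additivity of the spectrum over components (with $\rho\left(G\right)$ read as the maximum of $\rho$ over components, as in the footnote to Theorem~\ref{thm:M_d,G is Ramanujan}) the result is one-sided Ramanujan. There is essentially no obstacle here beyond correctly unwinding the definitions --- in particular, noticing that in dimension one a pseudo-reflection is just a finite-order element $\ne 1$, and that $\left({\cal P}1\right)$ for a one-dimensional $\pi$ is merely non-triviality --- so the corollary is a direct instance of the machinery already developed. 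As the footnote to the statement indicates, the loop-free hypothesis can in fact be dropped in this special case because the formula in the proof of Proposition~\ref{prop:avg-char-poly-of-random-coverings-is-real-rooted} does not break when $d=1$, but this refinement is not needed for the statement as given.
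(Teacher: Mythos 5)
Your proposal is correct and is essentially the paper's own argument: the corollary is obtained there by applying Theorem \ref{thm:gamma-pi-one-sided-rmnjn-covering} to the cyclic group of $m$-th roots of unity in $\mathrm{GL}_{1}\left(\mathbb{C}\right)$, i.e.\ item $\left(3\right)$ of Theorem \ref{thm:shepahrd-todd}, exactly as you do, with your direct verification of $\left({\cal P}1\right)$ and $\left({\cal P}2\right)$ matching the paper's remarks that a one-dimensional $\pi$ satisfies $\left({\cal P}1\right)$ iff it is non-trivial and that a finite-order scalar $\ne 1$ is a pseudo-reflection. The component-wise reduction for disconnected $G$ is a harmless addition consistent with the paper's conventions.
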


Of course, the result for $m$ follows from the result for $m'$ whenever
$1\ne m'|m$. For $m=2$ this is the main result of \cite{MSS13}.
As this corollary deals only with one-dimensional representations,
the original proof of \cite{MSS13} can be relatively easily adapted
to show it. This was noticed also by \cite{liu2020signatures}. 

Recall that all irreducible representations of abelian groups are
one-dimensional. Therefore, given an abelian group $\Gamma$ and a
finite graph $G$, there is a $\Gamma$-labeling of $G$ which yields
a one-sided Ramanujan $\left(\Gamma,\pi\right)$-covering for any
irreducible representation $\pi$ of $\Gamma$. However, this certainly
does not guarantee the existence of a single $\Gamma$-labeling which
is ``Ramanujan'' for all irreducible representations simultaneously.
In fact, such a $\Gamma$-labeling does not exist in general -- see
Remark \ref{remark:abelian groups}.

Still, in the special case where $\Gamma=\nicefrac{\mathbb{Z}}{3\mathbb{Z}}$
is the cyclic group of order $3$, there are only two non-trivial
representations $\pi_{1}$ and $\pi_{2}$, and one is the complex
conjugate of the other. Hence, $\phi_{\gamma,\pi_{2}}=\phi_{\gamma,\pi_{1}}$
for any $\Gamma$-labeling $\gamma$, and so the spectra are identical,
and we get, as noticed by \cite{chandrasekaran2015constructing,liu2020signatures}\footnote{Interestingly, it is also shown in \cite{chandrasekaran2015constructing}
that every graph has a one-sided Ramanujan 4-covering with cyclic
permutations. This does not seem to follow from the results in the
current paper.}:
\begin{cor}
\label{cor:C3-coverings}Every finite graph $G$ has a one-sided Ramanujan
$3$-covering, where the permutation above every edge is cyclic.
\end{cor}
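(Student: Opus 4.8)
The plan is to reduce the statement to the one‑dimensional case already established in Corollary \ref{cor:1-dim coverings} (equivalently, to a single application of Theorem \ref{thm:gamma-pi-one-sided-rmnjn-covering}), and then to exploit the special feature of $C_{3}=\mathbb{Z}/3\mathbb{Z}$ that its two non‑trivial irreducible representations are complex conjugates of one another. First I would fix notation: write $C_{3}=\langle g\mid g^{3}=1\rangle$, put $\omega=e^{2\pi i/3}$, and let $\pi_{1}\colon g\mapsto\omega$ and $\pi_{2}\colon g\mapsto\omega^{2}=\overline{\omega}$ be the two non‑trivial irreducible representations, so that $\pi_{2}=\overline{\pi_{1}}$. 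Identifying $C_{3}$ with the cyclic subgroup $\langle(1\,2\,3)\rangle\le S_{3}$, a $C_{3}$‑labeling $\gamma\colon E(G)\to C_{3}$ (with $\gamma(-e)=\gamma(e)^{-1}$) is exactly a $3$‑covering of $G$ in which the permutation above every edge is cyclic. The permutation representation of $C_{3}$ on the three sheets decomposes as $\mathrm{triv}\oplus\pi_{1}\oplus\pi_{2}$, so — arguing as in the proof of Claim \ref{claim:equivalence-of-coverings-and-std-coverings}, since $A_{\gamma,\mathrm{triv}}=A_{G}$ and spectra add over direct sums — the new spectrum of the covering determined by $\gamma$ is the disjoint union $\mathrm{Spec}(A_{\gamma,\pi_{1}})\sqcup\mathrm{Spec}(A_{\gamma,\pi_{2}})$.

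The key observation I would record next is that $\mathrm{Spec}(A_{\gamma,\pi_{2}})=\mathrm{Spec}(A_{\gamma,\pi_{1}})$ for every $\gamma$. Indeed $\pi_{1}$ is a $1$‑dimensional, hence unitary, representation, so $A_{\gamma,\pi_{1}}$ is Hermitian and $\phi_{\gamma,\pi_{1}}\in\mathbb{R}[x]$ by (the proof of) Claim \ref{claim: real spectrum}; on the other hand $A_{\gamma,\pi_{2}}$ is the entrywise complex conjugate of $A_{\gamma,\pi_{1}}$, so its characteristic polynomial is the complex conjugate of $\phi_{\gamma,\pi_{1}}$, which equals $\phi_{\gamma,\pi_{1}}$ because the latter has real coefficients. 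Consequently the new spectrum of the cyclic $3$‑covering given by $\gamma$ is precisely $\mathrm{Spec}(A_{\gamma,\pi_{1}})$ with every eigenvalue taken twice; in particular it is bounded above by $\rho(G)$ as soon as every eigenvalue of $A_{\gamma,\pi_{1}}$ is.

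It then remains to invoke the machinery for the pair $(C_{3},\pi_{1})$. This pair satisfies $\left({\cal P}1\right)$, since $\pi_{1}$ is $1$‑dimensional and non‑trivial (see the discussion after Definition \ref{def:P1}), and it satisfies $\left({\cal P}2\right)$, since $\pi_{1}(C_{3})$ is the cyclic group of order $3$ inside $\mathrm{GL}_{1}(\mathbb{C})$, generated by the pseudo‑reflection $\omega$ — this is case $(3)$ of Theorem \ref{thm:shepahrd-todd} with $m=3$. Hence Theorem \ref{thm:gamma-pi-one-sided-rmnjn-covering} (equivalently, Corollary \ref{cor:1-dim coverings} with $m=3$) provides a $C_{3}$‑labeling $\gamma$ of $G$ with all eigenvalues of $A_{\gamma,\pi_{1}}$ at most $\rho(G)$, and by the preceding paragraph the corresponding cyclic $3$‑covering is one‑sided Ramanujan. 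For disconnected $G$ one applies this componentwise, and loops cause no difficulty here because we are in the one‑dimensional situation, where the proof of Proposition \ref{prop:avg-char-poly-of-random-coverings-is-real-rooted}, and hence everything built on it, goes through unchanged (cf.\ the footnote to Corollary \ref{cor:1-dim coverings}). There is no genuine obstacle in this argument once the representation theory of $C_{3}$ is unwound; the only point that needs care is the bookkeeping in the second paragraph, namely that being one‑sided Ramanujan for $\pi_{1}$ automatically propagates — via $\pi_{2}=\overline{\pi_{1}}$ — to the entire new spectrum of the $3$‑covering.
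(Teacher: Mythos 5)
Your proposal is correct and follows essentially the same route as the paper: decompose the permutation representation of $\mathbb{Z}/3\mathbb{Z}$ as $\mathrm{triv}\oplus\pi_{1}\oplus\pi_{2}$, note that $\pi_{2}=\overline{\pi_{1}}$ forces $\phi_{\gamma,\pi_{2}}=\phi_{\gamma,\pi_{1}}$ (so the two spectra coincide), and then apply Corollary \ref{cor:1-dim coverings} (i.e.\ Theorem \ref{thm:gamma-pi-one-sided-rmnjn-covering} for the pair $(\mathbb{Z}/3\mathbb{Z},\pi_{1})$, which satisfies $\left({\cal P}1\right)$ and $\left({\cal P}2\right)$). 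Your extra bookkeeping on loops and disconnected graphs matches the paper's footnoted remarks, so there is nothing to correct.
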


From the third infinite family of complex reflection groups (item
$\left(2\right)$ in Theorem \ref{thm:shepahrd-todd}), we do not
get any significant combinatorial implications. If $\Gamma=G\left(m,k,d\right)$,
Theorem \ref{thm:gamma-pi-one-sided-rmnjn-covering} guarantees that
every graph has a one-sided Ramanujan ``signed $d$-covering'':
a $d$-covering of $G$ where every oriented edge is then labeled
by an $m$-th root of unity, and such that the product of roots in
every fiber of edges is an $\frac{m}{k}$-th root of unity. But Corollary
\ref{cor:1-dim coverings} shows that \emph{every} $d$-covering of
$G$ can be edge-labeled by $m$-th roots of unity so that the resulting
spectrum is one-sided Ramanujan. If $k<m$, we can label by $\frac{m}{k}$-th
roots, so applying Theorem \ref{thm:gamma-pi-one-sided-rmnjn-covering}
on $\Gamma$ yields nothing new. If $k=m$, the statement of the theorem
cannot be (easily) derived from former results: we get that $G$ has
a $d$-covering with edges labeled by $m$-th roots of unity, so that
the product of the labels on every fiber is $1$, and the resulting
spectrum is one-sided Ramanujan.

\subsection{Permutation Representations\label{subsec:Permutation-Representations}}

Every group action of $\Gamma$ on a finite set $X$ yields a representation
$\pi$ of dimension $\left|X\right|$. In this case, $\pi$ can be
taken to map $\Gamma$ into permutation matrices, hence $\left(\Gamma,\pi\right)$-coverings
of a graph $G$ correspond to topological $\left|X\right|$-coverings
of $G$ (with permutations restricted to the set $\pi\left(\Gamma\right)$).
Such representations are called permutation representations. For instance,
the natural action of $S_{r}$ on $\left\{ 1,\ldots,r\right\} $ yields
the set of all $r$-coverings from Theorem \ref{thm:Every-graph-has-one-sided-d-ram-cover}.
The action of $\nicefrac{\mathbb{Z}}{3\mathbb{Z}}$ by cyclic shifts
on a set of size 3 yields the regular representation of this group
and the coverings in Corollary \ref{cor:C3-coverings}. In general,
the regular representation of a group is always of this kind.

It is interesting to consider the set ${\cal A}$ of all possible
pairs $\left(\Gamma,\pi\right)$ where $\Gamma$ is a finite group
and $\pi$ a permutation representation, so that every graph has a
(one-sided) Ramanujan $\left(\Gamma,\pi\right)$-covering. Of course,
the action of $\Gamma$ on $X$ must be transitive: otherwise, the
coverings are never connected. Observe this set is closed under two
``operations'':
\begin{enumerate}
\item If $\Lambda\le\Gamma$ and $\left(\Lambda,\pi|_{\Lambda}\right)$
is in ${\cal A}$, then so is $\left(\Gamma,\pi\right)$.
\item The set ${\cal A}$ is closed under towers of coverings: a Ramanujan
covering of a Ramanujan covering is a Ramanujan covering of the original
graph. In algebraic terms this corresponds to wreath products of groups.
Namely, if $\left(\Gamma,\pi\right)$ and $\left(\Lambda,\rho\right)$
are both in ${\cal A}$ with respect to actions on the sets $X$ and
$Y$, respectively, then so is the pair $\left(\Gamma\mathrm{wr}_{X}\Lambda,\psi\right)$,
where 
\[
\Gamma\mathrm{wr}_{Y}\Lambda=\left(\prod_{y\in Y}\Gamma_{y}\right)\rtimes\Lambda
\]
 is the restricted wreath product ($\Gamma_{y}$ is a copy of $\Gamma$
for every $y\in Y$, and $\Lambda$ acts by permuting the copies according
to its action on $Y$), and $\psi$ is based on the action of $\Gamma\mathrm{wr}_{Y}\Lambda$
on the set $X\times Y$ by 
\[
\left(\left\{ g_{y}\right\} ,\ell\right).\left(x,y\right)=\left(g_{y}.x,\ell.y\right).
\]
\end{enumerate}
In this language, for example, a tower of 2-coverings, as considered
by \cite{BL06} and \cite{MSS13}, corresponds to a pair $\left(\Gamma,\pi\right)$
with $\Gamma$ a nested wreath product of $\nicefrac{\mathbb{Z}}{2\mathbb{Z}}$.
See also \cite[Chapter 5]{makelov2015expansion} and the references
therein.

\section{Open Questions\label{sec:Open-Questions}}

We finish with some open questions arising naturally from the discussion
in this paper.

\begin{ques}\textbf{Irreducible representations and one-sided Ramanujan
coverings:} Which pairs $\left(\Gamma,\pi\right)$ of a finite group
and an irreducible representation guarantee the existence of one-sided
Ramanujan $\left(\Gamma,\pi\right)$-coverings for every finite graph?
Can the statement of Theorem \ref{thm:gamma-pi-one-sided-rmnjn-covering}
be extended to more pairs? Does $\left({\cal P}1\right)$ suffice?
In fact, we are not aware of a single example of a pair $\left(\Gamma,\pi\right)$
with $\pi$ non-trivial and irreducible and a finite graph $G$ so
that there is no (one-sided) Ramanujan $\left(\Gamma,\pi\right)$-covering
of $G$. See also Remark \ref{remark:psl2}.

\end{ques}

\begin{ques}\textbf{Full Ramanujan coverings:} The previous question
can be asked for full (two-sided) Ramanujan coverings as well. The
difference is that in this case nothing is known for general graphs.
The case $\left(\nicefrac{\mathbb{Z}}{2\mathbb{Z}},\pi\right)$ with
$\pi$ the non-trivial one-dimensional representation is the Bilu-Linial
Conjecture \cite{BL06}. Proving it would yield the existence of infinitely
many $k$-regular non-bipartite Ramanujan graphs for every $k\ge3$.

\end{ques}

\begin{ques}\textbf{Even ``Fuller'' Ramanujan coverings:} There
is another, stronger sense, of Ramanujan graphs: graphs where the
non-trivial spectrum is contained in the spectrum of the universal
covering tree. The spectrum of the $k$-regular tree is precisely
$\left[-2\sqrt{k-1},2\sqrt{k-1}\right]$, so this coincides with the
standard definition of Ramanujan. But in other families of graphs,
the spectrum of the tree is not necessarily connected, and then the
current definition is stronger. For example, if $k\ge\ell$, the spectrum
of the $\left(k,\ell\right)$-biregular tree is 
\[
\left[-\sqrt{k-1}-\sqrt{\ell-1},-\sqrt{k-1}+\sqrt{\ell-1}\right]\cup\left\{ 0\right\} \cup\left[\sqrt{k-1}-\sqrt{\ell-1},\sqrt{k-1}+\sqrt{\ell-1}\right].
\]
Does every graph have a Ramanujan $r$-covering (or $2$-covering)
in this stricter sense?

\end{ques}

\begin{ques}\textbf{Regular representations and Cayley graphs:} Let
$\Gamma$ be finite and ${\cal R}$ its regular representation. Such
pairs are especially interesting as $\left(\Gamma,{\cal R}\right)$-coverings
of graphs generalize the notion of Cayley graphs (these are $\left(\Gamma,{\cal R}\right)$-coverings
of bouquets). For example, for certain families of finite groups,
mostly simple groups of Lie type, it is known that random Cayley graphs
are expanding uniformly (e.g.~\cite{bourgain2008uniform,breuillard2015expansion}).
Can this be extended to random $\left(\Gamma,{\cal R}\right)$-coverings
of graphs, at least, say, when $G$ is a good expander itself?

\end{ques}

\begin{ques}\textbf{\label{ques:The-d-matching-polynomial}The $\mathbf{d}$-matching
polynomial:} This paper shows that $\M_{d,G}$, the $d$-matching
polynomial of the graph $G$ share quite a few properties with the
classical matching polynomial, $\M_{1,G}$. But $\M_{1,G}$ has many
more interesting properties (a good reference is \cite{godsil1993algebraic}).
What parts of this theory can be generalized to $\M_{d,G}$? In particular,
it would be desirable to find a more direct proof to the real-rootedness
of $\M_{d,G}$. Such a proof may allow to extend this real-rootedness
result to graph with loops.

\end{ques}

\begin{ques}\textbf{\label{ques:Dealing-with-loops}Loops:} Some
results in this paper hold for graphs with loops (e.g.~Theorem \ref{thm:P1}).
We conjecture that, in fact, all the results hold for graphs with
loops. In particular, we conjecture that any finite graph $G$ with
loops should have a one-sided Ramanujan $r$-covering (Theorem \ref{thm:Every-graph-has-one-sided-d-ram-cover}),
that $\M_{d,G}$ is real-rooted for every $d$ (Theorem \ref{thm:M_d,G is Ramanujan})
and that if $\left(\Gamma,\pi\right)$ satisfies $\left({\cal P}1\right)$
and $\left({\cal P}2\right)$, then $G$ has a one-sided Ramanujan
$\left(\Gamma,\pi\right)$-covering (Theorem \ref{thm:gamma-pi-one-sided-rmnjn-covering}).
(And see Question \ref{ques:Another-interlacing-family}.)

If true, this would yield, for example, that if $A$ is a uniformly
random permutation matrix, or Haar-random orthogonal or unitary matrix
in $U\left(d\right)$, then $\mathbb{E}\left[\phi\left(A+A^{*}\right)\right]$
is real-rooted.

\end{ques}

\begin{ques}\textbf{\label{ques:Another-interlacing-family}Another
interlacing family of characteristic polynomials:} The one argument
in this paper that breaks for loops is in the proof of Proposition
\ref{prop:avg-char-poly-of-random-coverings-is-real-rooted}. The
problem is that if $e$ is a loop, then $\pi\left(\gamma\left(e\right)\right)$
and $\pi\left(\gamma\left(-e\right)\right)$ lie in the same $d\times d$
block of $A_{\gamma,\pi}$. One way to extend the arguments for loops
is to prove the following parallel of Theorem \ref{thm:MSS15}, which
we believe should hold:

For a matrix $A$ denote $A^{\herm}\overset{\mathrm{def}}{=}A+A^{*}$.
If ${\cal W}=\left\{ W_{i,j}\right\} _{1\le i\le m,1\le j\le\ell\left(i\right)}$
is defined as in Theorem \ref{thm:MSS15}, then
\[
\mathbb{E}_{{\cal W}}\left[\phi\left(\left[W_{1,1}\ldots W_{1,\ell\left(1\right)}A_{1}+\ldots+W_{m,1}\ldots W_{m,\ell\left(m\right)}A_{m}\right]^{\herm}\right)\right]
\]
is real-rooted for every $A_{1},\ldots,A_{m}\in\mathrm{M}_{d}\left(\mathbb{C}\right)$.

If true, this statement generalizes the fact that the characteristic
polynomials $\phi\left(A+A^{*}\right)$ and $\phi\left(BA+\left(BA\right)^{*}\right)$
interlace whenever $A,B\in\mathrm{GL}_{d}\left(\mathbb{C}\right)$
with $B$ a pseudo-reflection.

\end{ques}

\section*{Acknowledgments}

We would like to thank Miklós Abért, Péter Csikvári, Nati Linial and
Ori Parzanchevski for valuable discussions regarding some of the themes
of this paper. We also thank Daniel Spielman for sharing with us an
early version of \cite{MSS15}. Finally, we thank Jesse Geneson for
pointing out an inaccuracy in the statement of Corollary \ref{cor:P2 alone}
in the previous (and published) version of this paper.

\bibliographystyle{amsalpha}
\bibliography{ramanujan-coverings}

\noindent Chris Hall,\\
Department of Mathematics,\\
University of Wyoming \\
Laramie, WY 82071 USA \\
chall14@uwyo.edu\\

\noindent Doron Puder, \\
School of Mathematics,\\
Institute for Advanced Study,\\
Einstein Drive, Princeton, NJ 08540 USA\\
doronpuder@gmail.com\\

\noindent William F. Sawin, \\
Department of Mathematics, \\
Princeton University\\
Fine Hall, Washington Road \\
Princeton NJ 08544-1000 USA\\
wsawin@math.princeton.edu
\end{document}